\newtheorem{theorem}{Theorem}[section]
\newtheorem{definition}[theorem]{Definition}  
\newtheorem{lemma}[theorem]{Lemma}
\newtheorem{corollary}[theorem]{Corollary}
\newtheorem{remark}[theorem]{Remark}
\newtheorem{assumption}{Assumption}
\newtheorem{condition}{Condition}
\renewcommand{\geq}{\geqslant}
\renewcommand{\leq}{\leqslant}
\newcommand{\dd}{\operatorname{d}\! }
\newcommand{\dt}{\operatorname{d}\! t}
\newcommand{\ds}{\operatorname{d}\! s}
\newcommand{\dw}{\operatorname{d}\! W}
\newcommand{\dtp}{\dt\otimes\mathrm{d}\mathbb{P}\text{-a.s.}}
\newcommand{\interval}{[-\bar{\epsilon},\bar{\epsilon}]}
\newcommand{\nn}{\nonumber}
\begin{document}

\title{Multidimensional indefinite stochastic Riccati equations \\
and zero-sum stochastic linear-quadratic differential games\\
with non-Markovian regime switching
}

\author{Panpan Zhang\thanks{School of Control Science and Engineering, Shandong University, Jinan 250061, Shandong, China. Email: \url{zhangpanpan@mail.sdu.edu.cn}}
\and Zuo Quan Xu\thanks{Department of Applied Mathematics, The Hong Kong Polytechnic University, Kowloon, Hong Kong, China. \url{maxu@polyu.edu.hk}}
}
\maketitle 

\begin{abstract}
This paper is concerned with zero-sum stochastic linear-quadratic differential games in a regime switching model. The coefficients of the games depend on the underlying noises, so it is a non-Markovian regime switching model. Based on the solutions of a new kind of multidimensional indefinite stochastic Riccati equation (SRE) and a multidimensional linear backward stochastic differential equation (BSDE) with unbounded coefficients, we provide closed-loop optimal feedback control-strategy pairs for the two players. The main contribution of this paper, which is of great importance in its own right from the BSDE theory point of view, is to prove the existence and uniqueness of the solution to the new kind of SRE. Notably, the first component of the solution (as a process) is capable of taking positive and negative values simultaneously. For homogeneous systems, we obtain the optimal feedback control-strategy pairs under general closed convex cone control constraints. Finally, these results are applied to portfolio selection games with full or partial no-shorting constraint in a regime switching market with random coefficients.\bigskip\\
\textbf{Keywords.} Stochastic linear-quadratic control, indefinite stochastic Riccati equation, zero-sum game, non-Markovian, regime switching, random coefficient, multidimensional backward stochastic differential equation.
\end{abstract}

\section{Introduction}

Differential games explore the decision-making processes of two or more individuals (referred to as players) who simultaneously make choices while considering the trade-offs with their counterparts within some continuous-time dynamic systems. A zero-sum game is a bilateral game with a singular objective, perceived as a gain for one participant and an equivalent loss for the other.
The study of zero-sum differential games can be traced back to the pioneering work of Isaacs \cite{Isaacs} who studied the game within a deterministic framework.
Fleming and Souganidis \cite{Fleming} initiated the study of zero-sum differential games within random frameworks, which are now called stochastic differential games.
Elliott and Kalton \cite{Elliott} introduced the concept of upper and lower value functions and proved that the two value functions are the unique viscosity solutions to the associated Hamilton-Jacobi-Bellman-Isaacs equations.
Recently, Buckdahn and Li \cite{Buckdahn Li} studied a zero-sum stochastic differential game with recursive utilities by backward stochastic differential equation (BSDE, for short) approach. Yu \cite{Yu}
delved into a zero-sum stochastic linear-quadratic (LQ) differential game, leveraging the advantageous structure of the LQ system to derive an optimal feedback control-strategy pair through the solution of the associated Riccati equation.

In the aforementioned studies, all the market coefficients are presumed to be deterministic, rendering the Riccati equations as one-dimensional ordinary differential equations (ODEs).
Moon \cite{Moon} extended Yu's \cite{Yu} result to Markov jump systems, wherein the coefficients are deterministic functions of both time and regime. In his model, the Riccati equation becomes a multidimensional ODE, a complexity introduced by the existence of multiple regimes.
Lv \cite{Lv} investigated an infinite horizon zero-sum stochastic differential game within a regime-switching model.

However, assuming all the market coefficients are deterministic functions of time and regime may be too restrictive. In practice, market parameters such as interest rates, stock appreciation and volatility rates, are influenced by various factors, including politics, economic growth rates and so on. Therefore, it is necessary to allow the market parameters to depend not only on the regime (which reflects the macroeconomic status) but also on other random factors (which reflect some micro noises).
With this consideration, this paper studies zero-sum stochastic LQ differential (SLQD) games with regime switching and random coefficients. Since the coefficients depend on both the regime and the underlying noises, we are dealing with a non-Markovian regime-switching model so that the ODE approach fails.

It is well-known that the closed-form representation of the optimal control for a stochastic LQ control problem with deterministic coefficients is closely related to the solvability of the corresponding Riccati equation. This equation, which is an ODE, typically exhibits growth that exceeds linearity. When dealing with a non-Markovian model, the Riccati equation becomes a BSDE, known as a stochastic Riccati equation (SRE), which features a non-linear growth generator.

Bismut \cite{Bismut} was a pioneer in the study of LQ control problems with random coefficients, successfully addressing some specific cases --- particularly those with a linear generator. However, due to the high degree of nonlinearity, the solvability of general SREs has remained a challenging and long-standing issue.
Tang \cite{Tang SMP} was the first to establish the existence and uniqueness result for general SREs by employing stochastic Hamiltonian systems. In his work, the random control weighting matrix was required to be uniformly positive definite. For those interested in the LQ control problem with random coefficients, further reading can be found in the works of Kohlmann and Tang \cite{Kohlmann Tang}, Hu and Tang \cite{Hu Tang}, Tang \cite{Tang DPP}, Hu and Zhou \cite{Hu Zhou}, and Sun \cite{Sun}.

To study zero-sum SLQD games within non-Markovian regime switching frameworks, it is inevitable to encounter indefinite SREs, where the state or control weighting matrices may possess zero and negative eigenvalues. Indefinite SREs are also prevalent in stochastic LQ optimal control problems with random coefficients. Generally, the solvability of indefinite SREs presents an exceptionally challenging and long-standing issue.
Existence and/or uniqueness results for indefinite SREs have been established in some special, yet significant, cases, as documented in the works of Hu and Zhou \cite{Hu Zhou 2}, Qian and Zhou \cite{Qian Zhou}, Du \cite{Du}, Hu, Shi and Xu \cite{Xu AAP}, among others.

However, it is crucial to recognize that the indefinite SREs arising from zero-sum SLQD games and those from optimal stochastic control problems are fundamentally different.
This distinction stems from the fact that in zero-sum games, the objectives of the two players are inherently opposed, leading to control weighting matrices that generally exhibit opposite signs (thus being indefinite). In contrast, participants in a control problem, even if there are multiple, typically aim in the same direction (such as minimizing nonnegative quadratic cost functionals). Consequently, the weighting matrices in control problems are usually positive semi-definite to ensure that the value function is lower bounded.

The study of optimal control problems within non-Markovian regime-switching frameworks has only recently commenced. Hu, Shi, and Xu \cite{Xu AAP} were the pioneers in this area, initially tackling a homogenous stochastic LQ optimal control problem that incorporated closed convex cone constraint on the control variable. They applied their findings to a continuous-time mean-variance portfolio selection problem.
Subsequently, they \cite{Xu arXiv1} expanded their model to address inhomogeneous systems, successfully solving a novel class of multidimensional linear BSDEs with unbounded coefficients. In a recent work, they \cite{Xu arXiv2} delved into finite-time optimal consumption-investment problems featuring power, logarithmic, and exponential utilities within a regime-switching market characterized by random coefficients. Wen et al. \cite{WLXZ23} built upon the vector-valued case presented in \cite{Xu AAP} to explore the matrix-valued scenario, broadening the scope of applications and theoretical understanding.
Additionally, Moon \cite{Moon2} investigated zero-sum stochastic differential games where the diffusion term does not depend on controls.
This body of research collectively advances the understanding of complex control problems in financial mathematics and beyond.

In this paper, we explore zero-sum SLQD games for systems with non-Markovian regime switching. The coefficients' randomness originates from two sources: the Brownian motion, which represents the underlying noises, and the Markov chain, which accounts for the regime switching. Our work extends the control theory presented in previous studies Hu, Shi and Xu \cite{Xu AAP,Xu arXiv1} to the zero-sum game context, but there are notable distinctions between our approach and that of the referenced works.
Specifically, the weighting matrices in the prior studies are described as \emph{weak indefinite}, meaning they are positive semi-definite (their eigenvalues can be \emph{zero or positive}, but never negative). In contrast, the weighting matrices in our study are \emph{strong indefinite}, where the eigenvalues take \emph{both positive and negative values} simultaneously. This difference has significant implications: the solutions to the SREs in the previous research all have nonnegative first components, whereas in our case, the first components of the SRE solutions can take positive and negative values concurrently. This presents unique challenges and necessitates novel approaches to solve the SREs and derive the optimal strategies for zero-sum SLQD games.

We begin by defining admissible feedback controls and admissible feedback strategies for our games within the context of non-Markovian regime switching. For our investigation, we introduce a novel type of multidimensional indefinite SRE, the solution of which may simultaneously assume positive and negative values. Leveraging a stability result for BSDEs by Cvitanic and Zhang \cite{Zhang book}, along with a multidimensional comparison theorem by Hu and Peng \cite{Hu Peng}, we are able to establish the existence of the solution to the multidimensional indefinite SRE through monotone approximation.

However, our attempts to establish a uniqueness result directly through BSDE methodologies, such as the log transformation technique successfully applied in the previous study \cite{Xu AAP} for certain weak indefinite SREs, have not been fruitful. Specifically, we are unable to identify a transformation for our indefinite SRE that ensures the quadratic term in the generator is monotone --- a crucial step in the aforementioned study. This challenge arises because the objectives of the two players in our problem are inherently opposed, leading to an inevitable loss of monotonicity.
Instead, we resort to a verification theorem to establish uniqueness. By employing the technique of completing squares, we derive optimal feedback control-strategy pairs for the two players based on the solutions of the indefinite SRE and a multidimensional linear BSDE with unbounded coefficients, thereby implying uniqueness. It is important to note that this approach relies on the specific structure of the SRE, which is directly linked to a game scenario. The pursuit of proving uniqueness directly through a BSDE approach remains an open and significant area for future research.

Furthermore, we extend our analysis to include games for homogeneous systems. In such scenarios, we are able to integrate closed convex cone control constraint into the game framework and provide the corresponding optimal feedback control-strategy pairs for the players.

As a practical application of our theoretical results, we examine portfolio selection games that feature various short-selling prohibition constraints within a non-Markovian regime-switching market. This allows us to explore how the absence of short-selling opportunities, a common regulatory or self-imposed restriction in financial markets, affects the optimal strategies for investors operating under regime-switching conditions that introduce additional layers of complexity and uncertainty.
Our study not only advances the theoretical understanding of zero-sum SLQD games with non-Markovian regime switching but also offers valuable insights into the real-world implications of these models, particularly in the context of financial decision-making under regulatory constraints and market uncertainties.

The remainder of this paper is organized as follows. In Section \ref{PF}, we formulate a zero-sum SLQD game for inhomogeneous systems with non-Markovian regime switching.
In Section \ref{Solution}, we give the optimal feedback control-strategy pairs for the LQ game and prove the solvability of the related multidimensional indefinite SRE.
Section \ref{Game} is concerned about constrained LQ game for homogeneous systems.
In Section \ref{Application}, we apply the general results to solve several portfolio selection problems with possible short-selling prohibition constraints. Finally, Section \ref{Conclusion} concludes the paper.

\section{Problem Formulation}\label{PF}

Let $(\Omega, \mathcal{F}, \{\mathcal{F}_{t}\}_{0\leq t\leq T}, \mathbb{P})$
be a fixed complete probability space
where $\mathcal{F}=\mathcal{F}_{T}$ and $T>0$ is a fixed time horizon. Let $\mathbb{E}$ be the expectation with respect to (w.r.t.) $\mathbb{P}$.
In this space, define a standard $n$-dimensional Brownian motion $W(t) = (W_1(t),\ldots,W_n(t))^{\top}$, $t\in[0,T]$,
and an independent continuous-time stationary Markov chain $\alpha_t$, $t\in[0,T]$, valued in a finite state space $\mathcal{M} = \{1, 2,\ldots, l\}$ with $l \geq 1$. The superscript $\top$ denotes the transpose of vectors or matrices.
The Markov chain $\alpha_\cdot$ has a generator
$Q= (q_{ij })_{l\times l} $ with $q_{ij}\geq 0$ for $i \neq j$ and $\sum_{j=1}^{l} q_{ij} = 0$ for every $i \in \mathcal{M}$.
We assume $\mathcal{F}_{t}=\sigma\{ W(s), \alpha_s: 0\leq s\leq t\}\bigvee \mathcal{N}$, where $\mathcal{N}$ is the totality of all the $\mathbb{P}$-null sets of $\mathcal{F}$ and denote $\mathcal{F}^{W}_{t}=\sigma\{ W(s): 0\leq s\leq t\}\bigvee \mathcal{N}$.

We denote by $\mathbb{R}^{n}$ the $n$-dimensional real-valued Euclidean space with the Euclidean norm $|\cdot|$, by $\mathbb{R}_{>0}$ the set of all positive real numbers,
by $\mathbb{R}^{n\times m}$ the set of all $n\times m$ real-valued matrices,
and by $\mathbb{S}^{n} $ the set of all $n\times n$ real-valued symmetric matrices.
We use $I_{n}$ to denote the $n$-dimensional identity matrix and $\bm{0}$ denote 0 vectors or matrices with proper size which may vary from line to line. We define $x^{+}=\max\{x,0\}$, $x^{-}=\max\{-x,0\}$, $x \vee y=\max\{x,y\}$ and
$x \wedge y=\min\{x, y\}$ for $x,y\in\mathbb{R}$.
For $S\in \mathbb{S}^n$, $c\in \mathbb{R}$,
write $S\geq cI_{n}$ if $ y^{\top} S y\geq c|y|^2$ holds for any $y\in \mathbb{R}^{n}$,
and define $S\leq cI_{n}$ similarly.

We use the following spaces throughout the paper:\medskip\\ 
\begin{tabular}{rll}
$L^{\infty}_{\mathcal{F}_T}(\mathbb{R}^{n}):\!\!\!\!$&the set of all $\mathbb{R}^{n}$-valued $\mathcal{F}_T$-measurable essentially bounded random variables; \medskip\\

$L^2_{\mathcal{F}_T}(\mathbb{R}^{n}) :\!\!\!\!$& the set of all $\mathbb{R}^{n}$-valued $\mathcal{F}_T$-measurable random variables $\xi$ such\\
&that $\mathbb{E}\big[ |\xi|^2 \big]<\infty$; \medskip\\

$L^{\infty}_{\mathcal{F}}(0,T;\mathbb{R}^{n}) :\!\!\!\!$& the set of all $\mathcal{F}_t$-adapted essentially bounded processes $v:[0,T]\times \Omega\rightarrow \mathbb{R}^{n}$; \medskip\\

$L^2_{\mathcal{F}}(0,T;\mathbb{R}^{n}) :\!\!\!\!$& the set of all $\mathcal{F}_t$-adapted processes
$v:[0,T]\times \Omega\rightarrow \mathbb{R}^{n}$ such\\
&that $\mathbb{E}\big[\int_0^T |v(t)|^2\dt \big]<\infty$; \medskip\\

$L^{2,\text{loc}}_{\mathcal{F}}(0,T;\mathbb{R}^{n}) :\!\!\!\!$& the set of all $\mathcal{F}_t$-adapted processes
$v:[0,T]\times \Omega\rightarrow \mathbb{R}^{n}$ such\\
&that $\mathbb{P}\big(\int_0^T |v(t)|^2\dt <\infty \big)=1$; \medskip\\

$L^{2}_{\mathcal{F}}(C(0,T);\mathbb{R}^{n}):\!\!\!\!$& the set of all $\mathcal{F}_t$-adapted processes $v:[0,T]\times \Omega\rightarrow \mathbb{R}^{n}$ with continuous \\
&sample paths and $\mathbb{E}\Big[\sup \limits_{t\in[0,T]} |v(t)|^2 \Big]<\infty$.\medskip
\end{tabular}

\noindent These definitions are generalized in the obvious way to the cases that $\mathcal{F}$ is replaced by $\mathcal{F}^W$
and $\mathbb{R}^{n}$ by $\mathbb{R}$, $\mathbb{R}^{n\times m}$ or $\mathbb{S}^{n}$.
For notation simplicity, all the estimates between stochastic processes (resp. random variables) hold in the sense that $\dtp$ (resp. $\mathrm{d}\mathbb{P}$-a.s.).
All the processes unless otherwise stated are stochastic, so we omit the argument $\omega\in \Omega$.
Furthermore, some arguments, particularly those in integrands, such as $ t$, $\alpha_{t}$,and $i$ may be suppressed in circumstances when no confusion occurs.

This paper studies zero-sum SLQD games, where the controlled state process satisfies a scalar-valued inhomogeneous linear stochastic differential equation (SDE):
\begin{equation}\label{201}
\left\{
\begin{aligned}
\dd X(t)=\,&\big[ A(t,\alpha_t)X(t)+B_1(t,\alpha_t)^{\top}u_1(t) +B_2(t,\alpha_t)^{\top}u_2(t) +b(t,\alpha_t) \big]\dt \\
&+ \big[ C(t,\alpha_t)X(t)+D_1(t,\alpha_t)u_1(t) +D_2(t,\alpha_t)u_2(t)+\sigma(t,\alpha_t) \big]^{\top}\dw(t),\\
X(0)=\,&x\in\mathbb{R},\quad \alpha_0=i_0\in \mathcal{M}.
\end{aligned}
\right.
\end{equation}
Here $X(\cdot)$ denotes the state process, $u_1(\cdot)$ and $u_2(\cdot)$ are the control processes, and $(x,i_{0})$ is the initial state.
The objective functional is of quadratic form\footnote{Of course, one can introduce inhomogeneous terms in the objective functional as well, but the arguments are similar.}:
\begin{align}\label{202}
J_{x,i_0}( u_1,u_2)=\mathbb{E}\bigg[ \int_0^T\Big( & K(t,\alpha_t)X(t)^2+u_1(t)^{\top}R_{11}(t,\alpha_t)u_1(t)+2u_1(t)^{\top}R_{12}(t,\alpha_t)u_2(t)\\
&\quad+u_2(t)^{\top}R_{22}(t,\alpha_t)u_2(t)\Big)\dt+ G(\alpha_T)X(T)^2 \bigg].\nn
\end{align}
This functional can be regarded as the payoff for Player 1 and the cost for Player 2.
Player 1 aims to maximize \eqref{202}, whereas Player 2 aims to maximize its opposite, thus being zero sum. The coefficients are assumed to be bounded, i.e., for all $i\in \mathcal{M}$, $k\in\{1,2\}$, we have
\begin{gather*}
A(\cdot,\cdot, i),~b(\cdot,\cdot, i)\in L^{\infty}_{\mathcal{F}^W}(0,T;\mathbb{R}),
~~C(\cdot,\cdot, i), ~\sigma(\cdot,\cdot, i)\in L_{\mathcal{F}^W}^{\infty}(0,T;\mathbb{R}^{n}),\\
B_k(\cdot,\cdot, i)\in L_{\mathcal{F}^W}^{\infty}(0,T;\mathbb{R}^{m_k}),~~
D_k(\cdot,\cdot, i)\in L_{\mathcal{F}^W}^{\infty}(0,T;\mathbb{R}^{n\times m_k}), \\
K(\cdot,\cdot, i)\in L^{\infty}_{\mathcal{F}^W}(0,T;\mathbb{R}),~~ G(\cdot,i)\in L^{\infty}_{\mathcal{F}^W_T}(\mathbb{R}), \\
R_{kk}(\cdot,\cdot, i)\in L^{\infty}_{\mathcal{F}^W}(0,T;\mathbb{S}^{ m_k}),
~~R_{12}(\cdot,\cdot, i)\in L^{\infty}_{\mathcal{F}^W}(0,T;\mathbb{R}^{ m_1\times m_2}).
\end{gather*}
Note that, given a regime $i\in\mathcal{M}$, the coefficients still depend on the Brownian motion, thus it is a non-Markovian regime switching model.

For $k \in\{1, 2\}$, let $u_k(\cdot)$ denote the control process of Player $k$, chosen from the admissible control set
$\mathcal{U}_k= L^2_{\mathcal{F}}(0,T;\mathbb{R}^{m_k})$.
Clearly, for any admissible control pair $(u_1, u_2)\in \mathcal{U}_1\times \mathcal{U}_2$, there exists a unique strong solution
$X(\cdot) \in L^{2}_{\mathcal{F}}(C(0,T);\mathbb{R})$ to SDE \eqref{201}, called the corresponding admissible state process. Furthermore, due to the boundedness of coefficients, it is easily seen that
$-\infty<J_{x,i_0}( u_1,u_2)<\infty.$ So the objective functional \eqref{202} is well-defined for any admissible control pair in $\mathcal{U}_1\times \mathcal{U}_2$.

Besides the admissible control sets, we also need to define the admissible strategies for the two players.
Our definition of non-anticipative strategies (see Elliott and Kalton \cite{Elliott})
is adopted from Buckdahn and Li \cite[Definition 3.2]{Buckdahn Li}, Yu \cite{Yu} and Lv \cite{Lv}.

\begin{definition} \label{def:admissible strategy}
An admissible strategy for Player $1$ is a mapping $\beta_1:\mathcal{U}_2\rightarrow\mathcal{U}_1$
such that for any $\mathcal{F}_t$-stopping time $\tau$ : $\Omega\rightarrow [0, T ]$ and any two controls $u_2$, $\overline{u}_2\in\mathcal{U}_2$ with $u_2= \overline{u}_2$ on $[0, \tau]$, it holds that $\beta_1(u_2) = \beta_1(\overline{u}_2)$ on $[0, \tau]$. The set of all admissible strategies for Player 1 is denoted by $\mathcal{A}_1$.
Admissible strategies $\beta_2:\mathcal{U}_1\rightarrow\mathcal{U}_2$
and the set $\mathcal{A}_2$ of them for Player 2 are defined similarly.
\end{definition}
For $(x,i_{0})\in \mathbb{R}\times \mathcal{M}$,
we define \textit{Player 1's value} and \textit{Player 2's value} as
\begin{align*}
V_1(x,i_0)\triangleq\inf \limits_{\beta_2 \in \mathcal{A}_2} \sup \limits_{u_1\in \mathcal{U}_1} J_{x,i_0}\big( u_1,\beta_2( u_1)), \\
V_2(x,i_0) \triangleq \sup \limits_{\beta_1\in \mathcal{A}_1} \inf \limits_{u_2 \in \mathcal{U}_2} J_{x,i_0}\big( \beta_1(u_2), u_2 ).
\end{align*}
When $V_1(x,i_0)$ (resp. $V_2(x,i_0)$) is finite, the zero-sum SLQD game for Player 1 (resp. Player 2) is to find an admissible pair $(u_1^{*}, \beta_2^{*})\in\mathcal{U}_1\times \mathcal{A}_2 $ (resp., $(u_2^{*}, \beta_1^{*})\in\mathcal{U}_2\times \mathcal{A}_1$)
such that
$J_{x,i_0}( u^{*}_1,\beta^{*}_2( u^{*}_1))
= V_1(x,i_0)$ (resp., $J_{x,i_0}( \beta^{*}_1( u^{*}_2),u^{*}_2) = V_2(x,i_0)$),
in which case $(u_1^{*}, \beta_2^{*})$ (resp., $(u_2^{*}, \beta_1^{*})$) is called an optimal control-strategy pair for Player 1's value (resp. Player 2's value).
For simplicity, we call this game the LQ game \eqref{201}-\eqref{202}.
If the two players' values are equal, this common value, denoted by $V(x,i_0)$, is called the value of the game.

Similar to general LQ control problems, we expect that the optimal control-strategy pair is in a feedback form.
In our setting, the two players can observe not only the time, noise, current values of the state and regime, but also current value of the other's control.
We now give the definitions of feedback controls and feedback strategies for the LQ game \eqref{201}-\eqref{202}.

\begin{definition}\label{1'control}
An admissible feedback control for Player 1 is a measurable mapping
$\pi : [0, T ] \times \mathcal{M} \times\mathbb{R} \rightarrow \mathbb{R}^{m_1} $
such that
\begin{enumerate}[(a)] 
\item for each $(i, x)\in \mathcal{M} \times \mathbb{R}$, $\pi(\cdot,i,x) $ is an $\mathcal{F}^{W}_t$-adapted process;
\item for each $u_2(\cdot) \in \mathcal{U}_2$, there exists a unique solution $X(\cdot)$ to the following SDE:
\begin{equation}\label{203}
\left\{
\begin{aligned}
\dd X(t)=\,&\big[ AX(t)+B_1^{\top}\pi(t,\alpha_t,X(t)) +B_2^{\top}u_2(t)+b \big]\dt \\
&+ \big[ CX(t)+D_1\pi(t,\alpha_t,X(t)) +D_2 u_2(t) +\sigma \big]^{\top}\dw(t),\\
X(0)=\,&x\in\mathbb{R},\quad \alpha_0=i_0\in \mathcal{M},
\end{aligned}
\right.
\end{equation}
and $\pi(\cdot,\alpha_{\cdot},X(\cdot))\in \mathcal{U}_1$.
\end{enumerate}
\end{definition} 

\begin{definition}\label{2'strategy}
An admissible feedback strategy for Player 2 is a measurable mapping
$\Pi: [0, T ]\times \mathcal{M} \times \mathbb{R} \times \mathbb{R}^{m_1}\rightarrow \mathbb{R}^{m_2}$ such that
\begin{enumerate}[(a)]
\item for each $(i, x, u)\in \mathcal{M} \times \mathbb{R} \times \mathbb{R}^{m_1}$, $\Pi(\cdot,i,x,u) $ is an $\mathcal{F}^{W}_t$-adapted process;
\item for each $u_1(\cdot) \in \mathcal{U}_1$, there exists a unique solution $X(\cdot)$ to the following SDE:
\begin{equation}\label{204}
\left\{
\begin{aligned}
\dd X(t)&=\big[ AX(t)+B_1^{\top}u_1(t) +B_2^{\top}\Pi(t,\alpha_t,X(t),u_1(t))+b \big]\dt \\
&\quad\;+ \big[ CX(t)+D_1u_1(t) +D_2\Pi(t,\alpha_t,X(t),u_1(t))+\sigma \big]^{\top}\dw(t),\\
X(0)&=x\in\mathbb{R},\quad \alpha_0=i_0\in \mathcal{M},
\end{aligned}
\right.
\end{equation}
and $\beta_2 : u_1(\cdot) \mapsto \Pi(\cdot,\alpha_{\cdot},X(\cdot),u_1(\cdot))\in \mathcal{A}_2$.
\end{enumerate}
\end{definition}

\begin{definition}
Let $\pi$ be an admissible feedback control for Player 1 and $\Pi$ be an admissible feedback strategy for Player 2. The pair $(\pi,\Pi)$ is called an optimal feedback control-strategy pair for Player 1's value if the pair $(u_1, \beta_2)$ is optimal, that is,
$J_{x,i_0}( u_1,\beta_2( u_1))= V_1(x,i_0)$,
where $(u_1, \beta_2)$ is defined by
$u_1(\cdot)=\pi(\cdot,\alpha_{\cdot},X(\cdot))$, $\beta_2 : u_1(\cdot) \mapsto \Pi(\cdot,\alpha_{\cdot},X(\cdot),u_1(\cdot))$, and $X(\cdot)$ is the solution to the following SDE:
\begin{equation*}
\left\{
\begin{aligned}
\dd X(t)=\,&\big[ AX(t)+B_1^{\top}\pi(t,\alpha_t,X(t)) +B_2^{\top}\Pi(t,\alpha_t,X(t),u_1(t))+b \big]\dt \\
&+ \big[ CX(t)+D_1\pi(t,\alpha_t,X(t))+D_2\Pi(t,\alpha_t,X(t),u_1(t))+\sigma \big]^{\top}\dw(t),\\
X(0)=\,&x\in\mathbb{R},\quad \alpha_0=i_0\in \mathcal{M}.
\end{aligned}
\right.
\end{equation*}
\end{definition}

The admissible feedback control for Player 2, the admissible feedback strategy for Player 1,
and the optimal feedback control-strategy pair for Player 2's value are defined similarly. For simplicity, we directly call $u_1(\cdot)=\pi(\cdot,\alpha_{\cdot},X(\cdot))$ defined in Definition \ref{1'control} the feedback control for Player 1, and $\beta_2 : u_1(\cdot) \mapsto \Pi(\cdot,\alpha_{\cdot},X(\cdot),u_1(\cdot))$ defined in Definition \ref{2'strategy} the feedback strategy for Player 2, respectively.

Thanks to the boundedness of coefficients, there exists positive constants
$c_1$, $\bar{c}_2$, $c_3$, $\overline{K}$, $\overline{G}$, such that
\begin{gather}
|K|\leq\overline{K},~~ |G|\leq\overline{G},~~ 2A+C^{\top}C+\max\limits_{1\leq i,j\leq l} q_{ij}\leq c_1,~~
D_1^{\top}D_1\leq \bar{c}_2I_{m_1}, ~~ D_2^{\top}D_2\leq \bar{c}_2I_{m_2},\nn\\
\label{bounds on coefficients}
\big[B_1+D_1^{\top}C\big]^{\top}\big[B_1+D_1^{\top}C\big] \vee \big[B_2+D_2^{\top}C\big]^{\top}\big[B_2+D_2^{\top}C\big] \leq c_3.
\end{gather}
We define two positive constants
$$
\epsilon \triangleq 8 c_3 (e^{c_1lT}-1) [\overline{K}(e^{c_1lT}-1)+\overline{G}c_1le^{c_1lT} ] / (c_1 l)^2,\quad
\bar{\epsilon}\triangleq c_1 l \epsilon / [4c_3(e^{c_1lT}-1)].
$$
These notations will be used throughout this paper.

To ensure our problem is well-posed, we assume from now on the follow three assumptions hold without claim.
\begin{assumption}\label{ass1}
There exists a positive constant $\underline{c}_2$ such that $\underline{c}_2I_{m_1}\leq D_1^{\top}D_1$ and $\underline{c}_2I_{m_2}\leq D_2^{\top}D_2$ for all $i\in\mathcal{M}$.
\end{assumption}

\begin{assumption}\label{ass2}
It holds that $R_{11}\leq -(\epsilon+\bar{\epsilon}\bar{c}_2 )I_{m_1}$ and
$R_{22}\geq ( \epsilon+\bar{\epsilon}\bar{c}_2 )I_{m_2}$ for all $i\in\mathcal{M}$.
\end{assumption}

\begin{assumption}\label{ass3}
It holds that
$2\bar{c}_2 <\epsilon$.
\end{assumption}

\begin{remark}
In fact, given Assumptions \ref{ass1} and \ref{ass2}, Assumption \ref{ass3} is redundant. To see this, suppose the LQ game \eqref{201}-\eqref{202} satisfies Assumptions \ref{ass1} and \ref{ass2}, then the game is clearly equivalent to the game under the system \eqref{201} with the objective functional
\begin{equation*}
\begin{aligned}
\tilde{c}J_{x,i_0}( u_1,u_2)\triangleq\mathbb{E}\bigg[ \int_0^T \Big(& \tilde{c}KX(t)^2
+u_1(t)^{\top}\tilde{c}R_{11}u_1(t)+2u_1(t)^{\top}\tilde{c}R_{12}u_2(t) \\
&\quad+u_2(t)^{\top}\tilde{c}R_{22}u_2(t)\Big)\dt+ \tilde{c}G(\alpha_T)X(T)^2 \bigg], 
\end{aligned}
\end{equation*}
where $\tilde{c}>\frac{2\bar{c}_2} {\epsilon}$. One can check the latter satisfies Assumptions \ref{ass1}-\ref{ass3}. \end{remark}

\section{Solution to the LQ game \eqref{201}-\eqref{202}}\label{Solution}

In this section, we construct optimal feedback control-strategy pairs through the solutions of a new kind of multidimensional indefinite SRE and a multidimensional linear BSDE.
We first study the solvability of these two BSDEs, and then solve the problem via completing the square method.

To give more concise expressions, we introduce the following notations:
\begin{gather*}
u(\cdot)\triangleq\left[ \begin{array}{cc}
u_1(\cdot)\\
u_2 (\cdot) \\
\end{array} \right]_{m\times 1},~~~
B(t,i)\triangleq\left[ \begin{array}{cc}
B_1(t,i)\\
B_2(t,i) \\
\end{array} \right]_{m\times 1},\\
D(t,i)\triangleq [D_1(t,i),D_2(t,i)]_{n\times m},~~~
R(t,i)\triangleq \left[ \begin{array}{cc}
R_{11}(t,i) &R_{12}(t,i)\\
R_{12}(t,i)^{\top} &R_{22}(t,i) \\
\end{array} \right]_{m\times m},
\end{gather*}
where $m \triangleq m_1+m_2$.
Then, we can rewrite \eqref{201} and \eqref{202} as
\begin{equation*}
\left\{
\begin{aligned}
\dd X(t)=\,&\big[ A(t,\alpha_t)X(t)+B(t,\alpha_t)^{\top}u(t)+b(t,\alpha_t) \big]\dt\\
&+ \big[ C(t,\alpha_t)X(t)+D(t,\alpha_t)u(t) +\sigma(t,\alpha_t) \big]^{\top}\dw(t),\\
X(0)=\,&x\in\mathbb{R},\quad \alpha_0=i_0\in \mathcal{M},
\end{aligned}
\right.
\end{equation*}
and
\begin{equation*}
J_{x,i_0}\big( u_1,u_2)
=\mathbb{E}\bigg[ \int_0^T \Big( K(t,\alpha_t)X(t)^2+u(t)^{\top}R(t,\alpha_t)u(t) \Big)\dt
+ G(\alpha_T)X(T)^2 \bigg].
\end{equation*}

In the classical inhomogeneous LQ control theory, an optimal feedback control is associated with a Riccati equation and a linear equation (see a systematic account in \cite[Chapter 6]{Yong Zhou}). For our LQ game with non-Markovian regime switching, the Riccati equation is a multidimensional BSDE with an indefinite generator and we call it an indefinite SRE.
In addition, the linear equation is a multidimensional BSDE with unbounded coefficients.

\subsection{Solvability of two BSDEs}
To introduce the indefinite SRE and the linear BSDE, we first introduce some notations.
For $(t,i,P, \varphi,\Lambda, \Delta )\in [0,T]\times \mathcal{M}\times\mathbb{R} \times \mathbb{R} \times \mathbb{R}^{n} \times\mathbb{R}^{n}$, we set
\begin{align*}
\widehat{R}(t,i,P) &\triangleq R(t,i)+PD(t,i)^{\top}D(t,i)
\triangleq \begin{bmatrix}
\widehat{R}_{11}(t,i,P) & \widehat{R}_{12}(t,i,P) \\
\widehat{R}_{12}(t,i,P)^{\top} & \widehat{R}_{22}(t,i,P) \\
\end{bmatrix}_{m\times m}\\
&= \begin{bmatrix}
R_{11}+PD_1^{\top}D_1 & R_{12}+PD_1^{\top}D_2 \\
R_{12}^{\top}+PD_2^{\top}D_1 & R_{22}+PD_2^{\top}D_2 \\
\end{bmatrix},\\[10pt]
\widehat{C}(t,i,P,\Lambda)&\triangleq P B(t,i)+D(t,i)^{\top}[PC(t,i)+\Lambda]\\
&\triangleq \begin{bmatrix}
\widehat{C}_{1}(t,i,P,\Lambda) \\
\widehat{C}_{2}(t,i,P,\Lambda) \\
\end{bmatrix} _{m\times 1}
= \begin{bmatrix}
P (B_1+D_1^{\top}C)+D_1^{\top}\Lambda \\
P (B_2+D_2^{\top}C)+D_2^{\top}\Lambda \\
\end{bmatrix},\\[10pt]
\end{align*}
\begin{align*}
\hat{\sigma}(t,i,P,\varphi,\Delta)&\triangleq \varphi B(t,i)+ D(t,i)^{\top}[P\sigma(t,i)+\Delta]\\
&\triangleq \begin{bmatrix}
\hat{\sigma}_{1}(t,i,P,\varphi,\Delta) \\
\hat{\sigma}_{2}(t,i,P,\varphi,\Delta) \\
\end{bmatrix} _{m\times 1}
= \begin{bmatrix}
\varphi B_1+P D_1^{\top}\sigma+D_1^{\top}\Delta \\
\varphi B_2+P D_2^{\top}\sigma+D_2^{\top}\Delta\\
\end{bmatrix},
\end{align*}
where we omit the arguments $(t,i)$ in the coefficients in the last equations of the above definitions.
Henceforth, we often drop the arguments for $\widehat{R}$, $\widehat{C}$, and $\hat{\sigma}$.

\begin{remark}\label{rem:invertible}
The solution to our SRE \eqref{304} below is expected to satisfy $P(\cdot,i)\in \interval$ for all $i\in \mathcal{M}$. This means the SRE \eqref{304} is an indefinite BSDE. In order to solve it, we first establish some priori estimates on the coefficients.
Suppose $P\in \interval$.
Thanks to Assumption \ref{ass2}, for all $i\in \mathcal{M}$, we have
$$\widehat{R}_{11}\leq -\epsilon I_{m_1},\quad \widehat{R}_{22}\geq \epsilon I_{m_2}, \quad
-\frac{1}{\epsilon} I_{m_1}\leq\widehat{R}_{11}^{-1}< 0 I_{m_1},\quad
0 I_{m_2}< \widehat{R}_{22}^{-1}\leq \frac{1}{\epsilon} I_{m_2}.$$
Furthermore, $\widehat{R}$ is an invertible indefinite matrix and
\begin{equation*}
\begin{aligned}
\widehat{R}^{-1}& = \begin{bmatrix}
\widetilde{R}_{11}^{-1} & -\widetilde{R}_{11}^{-1}\widehat{R}_{12}\widehat{R}_{22}^{-1} \\
-\widehat{R}_{22}^{-1}\widehat{R}_{12}^{\top}\widetilde{R}_{11}^{-1} & \widehat{R}_{22}^{-1} +\widehat{R}_{22}^{-1}\widehat{R}_{12}^{\top}\widetilde{R}_{11}^{-1} \widehat{R}_{12}\widehat{R}_{22}^{-1} \\
\end{bmatrix}\\
&= \begin{bmatrix}
\widehat{R}_{11}^{-1}+\widehat{R}_{11}^{-1}\widehat{R}_{12}\widetilde{R}_{22}^{-1}\widehat{R}_{12}^{\top}\widehat{R}_{11}^{-1} & -\widehat{R}_{11}^{-1}\widehat{R}_{12}\widetilde{R}_{22}^{-1} \\
-\widetilde{R}_{22}^{-1}\widehat{R}_{12}^{\top}\widehat{R}_{11}^{-1} & \widetilde{R}_{22}^{-1} \\
\end{bmatrix},
\end{aligned}
\end{equation*}
where $$\widetilde{R}_{11}\triangleq\widehat{R}_{11}-\widehat{R}_{12}\widehat{R}_{22}^{-1}\widehat{R}_{12}^{\top},~~
\widetilde{R}_{22} \triangleq \widehat{R}_{22}-\widehat{R}_{12}^{\top}\widehat{R}_{11}^{-1}\widehat{R}_{12},$$
which satisfy the estimates
$$\widetilde{R}_{11}\leq - \epsilon I_{m_1},\quad\widetilde{R}_{22}\geq \epsilon I_{m_2},\quad
-\frac{1}{\epsilon}I_{m_1}\leq \widetilde{R}_{11}^{-1} < 0I_{m_1},\quad
0I_{m_2}< \widetilde{R}_{22}^{-1} \leq \frac{1}{\epsilon}I_{m_2}. $$
\end{remark}

For $(t,i,P,\varphi,\Lambda,\Delta)\in [0,T]\times \mathcal{M}\times \interval\times \mathbb{R}\times \mathbb{R}^n\times \mathbb{R}^n$, we define
\begin{align*}
H_1(t,i,P,\Lambda) &\triangleq -\widehat{C}^{\top}\widehat{R}^{-1}\widehat{C}=-\widehat{C}_2^{\top}\widehat{R}_{22}^{-1}\widehat{C}_2
-\big[ \widehat{C}_{1}-\widehat{R}_{12}\widehat{R}_{22}^{-1}\widehat{C}_{2} \big]^{\top}\widetilde{R}_{11}^{-1}
\big[ \widehat{C}_{1}-\widehat{R}_{12}\widehat{R}_{22}^{-1}\widehat{C}_{2} \big]\\
&=-\widehat{C}_1^{\top}\widehat{R}_{11}^{-1}\widehat{C}_1
-\big[ \widehat{C}_{2}-\widehat{R}_{12}^{\top}\widehat{R}_{11}^{-1}\widehat{C}_{1} \big]^{\top}\widetilde{R}_{22}^{-1}
\big[ \widehat{C}_{2}-\widehat{R}_{12}^{\top}\widehat{R}_{11}^{-1}\widehat{C}_{1} \big],\\
H_2(t,i,P,\Lambda,\varphi,\Delta) &\triangleq -\widehat{C}^{\top}\widehat{R}^{-1}\hat{\sigma}, \\
H_3(t,i,P,\varphi,\Delta) &\triangleq -\hat{\sigma}^{\top}\widehat{R}^{-1}\hat{\sigma}.
\end{align*}

\begin{remark}\label{rem:growth}
For all $(i,P,\Lambda)\in \mathcal{M} \times\interval\times \mathbb{R}^n$, we have the estimates
$$-\frac{1}{\epsilon}|\widehat{C}_2|^2\leq
-\widehat{C}_2^{\top}\widehat{R}_{22}^{-1}\widehat{C}_2\leq H_1
\leq-\widehat{C}_1^{\top}\widehat{R}_{11}^{-1}\widehat{C}_1
\leq \frac{1}{\epsilon}|\widehat{C}_1|^2.$$
Thus, we obtain from \eqref{bounds on coefficients} the estimate
$$|H_1| \leq\frac{2(c_3\bar{\epsilon}^2+\bar{c}_2|\Lambda|^2)}{\epsilon}.$$
\end{remark}

The indefinite SRE and the linear BSDE for the LQ game \eqref{201}-\eqref{202} are given by
\begin{equation}
\label{304}
\left\{
\begin{aligned}
\dd P(t,i)=\,&-\Big[K(t,i)+P(t,i)\big[2A(t,i)+C(t,i)^{\top}C(t,i)\big]+2C(t,i)^{\top}\Lambda(t,i)\\
&\quad+H_1(t,i,P(t,i),\Lambda(t,i))+\sum_{j\in \mathcal{M}} q_{ij}P(t,j)\Big]\dt +\Lambda(t,i)^{\top}\dw(t),\\
P(T,i)=&\,G(i), \,
P(\cdot,i)\in \interval, \, \text{ for all } \, i\in \mathcal{M},
\end{aligned}
\right.
\end{equation}
and
\begin{equation}
\label{305}
\left\{
\begin{aligned}
\dd \varphi(t,i)=\,&-\Big[P(t,i)\big[b(t,i)+C(t,i)^{\top}\sigma(t,i)\big]+\sigma(t,i)^{\top}\Lambda(t,i)
+A(t,i)\varphi(t,i)\\
&\quad +C(t,i)^{\top}\Delta(t,i)+H_2(t,i,P(t,i),\Lambda(t,i),\varphi(t,i),\Delta(t,i)) \\
&\quad+\sum_{j\in \mathcal{M}} q_{ij}\varphi(t,j)\Big]\dt +\Delta(t,i)^{\top}\dw(t),\\
\varphi(T,i)=\,&0, \,\text{ for all } \, i\in \mathcal{M}.
\end{aligned}
\right.
\end{equation}
The second BSDE depends on the first one, but not vice versa, so they are partially coupled.

\begin{definition} \label{def: solution of SRE}
A vector process $(\bm{P}(\cdot),\bm{\Lambda}(\cdot))=\big(P(\cdot,i),\Lambda(\cdot,i)\big)_{i\in \mathcal{M}}$ is called a solution of multidimensional BSDE \eqref{304}, if it satisfies \eqref{304}, and $\big(P(\cdot,i),\Lambda(\cdot,i)\big)\in L_{\mathcal{F}^W}^{\infty}(0,T;\mathbb{R})\times L_{\mathcal{F}^W}^{2}(0,T;\mathbb{R}^{n})$
for all $i \in \mathcal{M}$. The solution of \eqref{305} is defined similarly.
\end{definition}

We now study the solvability of \eqref{304} and \eqref{305}. Indeed, the solvability of \eqref{305} is already known in the literature. The main difficulty of our paper is to establish the solvability of \eqref{304}.
It is a highly nonlinear multidimensional BSDE and the invertible matrix $\widehat{R}$ in quadratic generator $H_1$ is indefinite, so it is an indefinite SRE.
There are several results on the solvability of indefinite SREs or indefinite quadratic BSDEs (see, e.g., \cite{Hu Zhou 2}, \cite{Qian Zhou}, \cite{Du}).
But up to our knowledge, no existing results could be directly applied to \eqref{304}.
We follow the method in \cite{Xu AAP} to establish the existence result for \eqref{304}.
But this is not straightforward and requires more delicate analysis since $\widehat{R}$ is not definite.

As for the uniqueness, the direct approach using log transformation in \cite{Xu AAP} fails for our indefinite SRE.
In fact, because $\widehat{R}$ is an indefinite matrix, we cannot find a transformation such that the quadratic term in generator is monotone. This is because the goal of the two players in our problem take opposite directions, so that monotonicity is in general losing.
Instead, we establish the uniqueness result for \eqref{304} as a byproduct of Theorem \ref{theorem:1's verification theorem}.
It is a challenge problem to establish the uniqueness by pure BSDE methods, which leaves for our future research.

We now prove the existence of the solution to \eqref{304} by the approximation method in \cite[Lemma 9.6.6]{Zhang book}. Different from \cite{Xu AAP}, the matrix
$\widehat{R}$ in our problem is neither positive- nor negative-semidefinite, so we need two Lipschitz functions to approximate the generator from above and below respectively.
This method is often used to study the solvability of BSDEs in the literature, e.g., \cite{Fan Hu Tang}.

\begin{theorem}[Existence of \eqref{304}] \label{theorem:solvability of ESRE}
The indefinite SRE \eqref{304} admits a solution.
\end{theorem}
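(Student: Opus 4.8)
The plan is to construct a solution to \eqref{304} by a monotone (decreasing) approximation scheme, following the strategy of \cite{Xu AAP} but paying attention to the fact that $\widehat R$ is indefinite rather than positive semi-definite. Concretely, for each integer $k\geq 1$ I would introduce a truncated generator in which the quadratic term $H_1(t,i,P,\Lambda)$ is replaced by $H_1(t,i,P,\Lambda\wedge k)$ (truncating the $\Lambda$-dependence, with the truncation understood componentwise or through $\Lambda\mapsto \Lambda\,k/(|\Lambda|\vee k)$), so that by Remark \ref{rem:growth} the generator becomes uniformly Lipschitz in $\Lambda$ while keeping the linear-in-$P$ and bounded-in-$\omega$ structure intact. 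The resulting multidimensional BSDE has a Lipschitz generator and bounded terminal data $G(i)$, hence admits a unique solution $(\bm P^k,\bm\Lambda^k)$ with $\bm P^k$ bounded; this handles the $\sum_j q_{ij}P(\cdot,j)$ coupling term routinely since it is linear.

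The second step is to obtain uniform (in $k$) a priori bounds showing $P^k(\cdot,i)\in\interval$. This is where the precise choice of the constants $\epsilon,\bar\epsilon$ and Assumptions \ref{ass1}--\ref{ass3} enter. The upper bound $P^k(\cdot,i)\leq\bar\epsilon$ should come from a comparison argument against a deterministic supersolution: using $2A+C^\top C+\max_{ij}q_{ij}\leq c_1$, the nonpositivity $H_1\leq \tfrac1\epsilon|\widehat C_1|^2$ controlled via $|\widehat C_1|^2\leq 2(c_3 P^2 + \bar c_2|\Lambda|^2)$ — wait, more carefully, $\widehat C_1 = PB_1 + D_1^\top(PC+\Lambda)$, so I would bound $H_1$ from above by a constant times $\bar\epsilon^2 + |\Lambda|^2$ on the region $P\in\interval$ and then absorb the $|\Lambda|^2$ term. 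The clean way is a linearization trick: since $H_1 = -\widehat C^\top\widehat R^{-1}\widehat C$ and, for a solution with $P\in\interval$, one has $H_1 \leq -\widehat C_1^\top\widehat R_{11}^{-1}\widehat C_1$ while the full expression can be written so that the $C^\top\Lambda$ cross term and the $\Lambda$-quadratic term combine into a perfect square plus a linear-in-$\Lambda$ remainder; then a Girsanov change of measure removes the $\Lambda$ term and leaves a linear ODE comparison yielding exactly the bound $\bar\epsilon = \frac{c_1 l\epsilon}{4c_3(e^{c_1lT}-1)}$. Symmetrically, the lower bound $P^k(\cdot,i)\geq-\bar\epsilon$ uses the other representation $H_1\geq-\widehat C_2^\top\widehat R_{22}^{-1}\widehat C_2\geq-\tfrac1\epsilon|\widehat C_2|^2$ and the same Girsanov-plus-linear-comparison argument on the opposite side. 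Assumption \ref{ass3} ($2\bar c_2<\epsilon$) is what makes the $\Lambda$-quadratic coefficient have the right sign for the completion of squares to produce a bound rather than blowing up.

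The third step is to pass to the limit. Having shown $\|P^k(\cdot,i)\|_\infty\leq\bar\epsilon$ uniformly, the truncation at level $k$ is inactive once one also controls $\bm\Lambda^k$; to get this I would use the BSDE a priori $L^2$ estimate on $\bm\Lambda^k$ (standard Itô/energy estimate, using boundedness of $\bm P^k$ and of $G$ and $K$), which gives $\mathbb E\int_0^T|\Lambda^k(t,i)|^2\dt\leq M$ independent of $k$. Then I would set up the monotone scheme properly: define $P^{k}$ via iteration where at each stage the quadratic term is frozen, so that the multidimensional comparison theorem of \cite{Hu Peng} applies and gives $\bm P^{k+1}\leq\bm P^{k}$ (or the reverse), hence $\bm P^k\downarrow\bm P$ pointwise with $\bm P$ bounded in $\interval$; simultaneously I would invoke the BSDE stability result of Cvitanic--Zhang \cite{Zhang book} to get convergence of $\bm\Lambda^k$ in $L^2$ along a subsequence, and to identify the limit $(\bm P,\bm\Lambda)$ as a genuine solution of \eqref{304}, checking that the quadratic generator passes to the limit (dominated convergence using the uniform $L^2$ bound on $\bm\Lambda^k$ and the quadratic growth from Remark \ref{rem:growth}). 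The main obstacle is precisely Step 2: obtaining the two-sided uniform bound $P\in\interval$ for the indefinite equation, because unlike the positive-semidefinite case one cannot simply argue monotonically from $0$ in one direction — one needs a genuinely two-sided estimate, and the delicate point is that the quadratic term $H_1$ changes the sign of its contribution depending on which representation is used, so the completion-of-squares and change-of-measure must be done carefully on each side and the constants must be matched so that the a priori region is stable under the approximation. The coupling term $\sum_j q_{ij}P(\cdot,j)$ adds the factor $l$ (number of regimes) into the exponents, which is already accounted for in the definitions of $\epsilon$ and $\bar\epsilon$.
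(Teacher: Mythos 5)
Your overall architecture (Lipschitz approximation of the generator, two-sided a priori bounds via comparison with explicit bounding solutions, then monotone stability to pass to the limit) is the right one and matches the paper's. But the specific approximation you propose does not work, for two reasons. First, truncating only the $\Lambda$-argument, replacing $H_1(t,i,P,\Lambda)$ by $H_1(t,i,P,\Lambda\wedge k)$, does not produce a globally well-defined Lipschitz generator: $H_1=-\widehat C^{\top}\widehat R^{-1}\widehat C$ involves $\widehat R^{-1}=(R+PD^{\top}D)^{-1}$, which is guaranteed to exist only for $P\in\interval$ (Remark \ref{rem:invertible}); for $P$ outside this interval $\widehat R_{11}$ or $\widehat R_{22}$ can be singular, so your approximating BSDE is not even posed for all $P\in\mathbb{R}$, let alone Lipschitz in $P$ (and $H_1$ is certainly not linear in $P$). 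You must cut off in $P$ as well. Second, and more seriously, your scheme has no monotonicity in $k$: $H_1(\cdot,P,\Lambda\wedge k)$ is not monotone in $k$ because $H_1$ is an \emph{indefinite} quadratic form in $\Lambda$, and your fallback (a Picard-type iteration with the quadratic term frozen) is not monotone either. Monotonicity of the approximating solutions $P^k$ is exactly what the Cvitanic--Zhang stability lemma (the monotone stability theorem for quadratic BSDEs) needs in order to upgrade the uniform $L^2$ bound on $\Lambda^k$ to strong convergence and to pass to the limit in the quadratic term; a uniform $L^2$ bound alone gives only weak compactness, which does not identify the limit of $H_1(\cdot,P^k,\Lambda^k)$. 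The paper resolves both problems at once: it first sets $\widehat H_1=H_1$ for $P\in\interval$ and $\widehat H_1=0$ for $|P|>\bar\epsilon$, and then takes the Lipschitz sup-convolution $H_1^k(P,\Lambda)=\sup_{\widetilde P,\widetilde\Lambda}\{\widehat H_1(\widetilde P,\widetilde\Lambda)-k|P-\widetilde P|-k|\Lambda-\widetilde\Lambda|\}$, which is globally Lipschitz, obeys the same growth bound as $H_1$, and is \emph{decreasing} in $k$, so that the Hu--Peng comparison theorem yields a decreasing sequence $P^k$ trapped in $\interval$.

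A smaller remark on your Step 2: the Girsanov argument you sketch is not how the paper obtains the bound $P^k(\cdot,i)\in\interval$, and it is delicate here because for the upper bound the generator is dominated by a term $+\frac{2\bar c_2}{\epsilon}|\Lambda|^2$ with a \emph{positive} quadratic coefficient, which cannot be absorbed into a drift change without first knowing that $\Lambda^k$ is BMO. The paper instead compares $\bm P^k$ directly with the explicit deterministic solutions $\pm\bigl(\overline G e^{c_1l(T-t)}+\frac{\overline K\epsilon+2c_3\bar\epsilon^{2}}{c_1l\epsilon}[e^{c_1l(T-t)}-1]\bigr)$ of the bounding multidimensional BSDEs with generators $\underline g,\overline g$; since these explicit solutions have $\Lambda\equiv\bm{0}$, the quadratic terms in $\underline g,\overline g$ are harmless and the Hu--Peng comparison applies directly. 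Relatedly, Assumption \ref{ass3} plays no role in the existence proof (it is used later, in Lemma \ref{BMO}, for the BMO estimate), so the role you assign it in the completion of squares is a misattribution.
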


\begin{proof}
For $ \bm{P}=(P_1,P_2,\cdots,P_l)^{\top}\in \mathbb{R}^{l}$,
$ \bm{\Lambda}=(\Lambda_1,\Lambda_2,\cdots,\Lambda_l)\in \mathbb{R}^{n\times l}$, $(t,i)\in[0,T]\times \mathcal{M}$, we set
$$g(t,i,\bm{P},\Lambda_i)\triangleq K(t,i)+P_i\big[2A(t,i)+C(t,i)^{\top}C(t,i)\big]+2C(t,i)^{\top}\Lambda_i+\sum_{j\in \mathcal{M}} q_{ij}P_j.$$
Thanks to Assumption \ref{ass2} and Remark \ref{rem:growth}, we can define functions $\overline{H}_1$
and $\underline{H}_1$ such that they are both smooth w.r.t. $P$ and satisfy
\begin{equation*}
\begin{aligned}
\overline{H}_1(t,i,P,\Lambda)&=\underline{H}_1(t,i,P,\Lambda)=0,\,\, \text{ for } |P|\geq 2\bar{\epsilon},\\
\overline{H}_1(t,i,P,\Lambda)&=
-\big[ \widehat{C}_{1}-\widehat{R}_{12}\widehat{R}_{22}^{-1}\widehat{C}_{2} \big]^{\top}\widetilde{R}_{11}^{-1}
\big[ \widehat{C}_{1}-\widehat{R}_{12}\widehat{R}_{22}^{-1}\widehat{C}_{2} \big], \,\, \text{ for } |P|\leq \bar{\epsilon},\\
\underline{H}_1(t,i,P,\Lambda)&=
-\widehat{C}_2^{\top}\widehat{R}_{22}^{-1}\widehat{C}_2, \,\, \text{ for } |P|\leq \bar{\epsilon},
\end{aligned}
\end{equation*}
and
$$0\leq \overline{H}_1(t, i,P,\Lambda)\leq \frac{2(c_3\bar{\epsilon}^2+\bar{c}_2|\Lambda|^2)}{\epsilon}, \quad -\frac{2(c_3\bar{\epsilon}^2+\bar{c}_2|\Lambda|^2)}{\epsilon}\leq
\underline{H}_1(t, i,P,\Lambda) \leq 0.$$
For $k\geq1$, $(t, i,P,\Lambda)\in [0,T]\times \mathcal{M}\times \mathbb{R}\times \mathbb{R}^n$, we define
\begin{equation*}
\begin{aligned}
\overline{H}_1^k(t,i,P,\Lambda)&\triangleq \inf \limits_{(\widetilde{P}, \widetilde{\Lambda})\in \mathbb{R}\times \mathbb{R}^n}
\Big\{ \overline{H}_1(t,i,\widetilde{P},\widetilde{\Lambda})+k|P-\widetilde{P}|+k|\Lambda-\widetilde{\Lambda}| \Big\},\\
\underline{H}_1^k(t,i,P,\Lambda)&\triangleq \sup \limits_{(\widetilde{P}, \widetilde{\Lambda})\in \mathbb{R}\times \mathbb{R}^n}
\Big\{ \underline{H}_1(t,i,\widetilde{P},\widetilde{\Lambda})-k|P-\widetilde{P}|-k|\Lambda-\widetilde{\Lambda}| \Big\}.
\end{aligned}
\end{equation*}
Note that $\overline{H}_1^k$ is increasing to $\overline{H}_1$ and $\underline{H}_1^k$ is decreasing to $\underline{H}_1$ as $k\to\infty$.
For all $k\geq1$ and $(t, i,P,\Lambda)\in [0,T]\times \mathcal{M}\times \mathbb{R}\times \mathbb{R}^n$, we have
\begin{equation*}
0\leq \overline{H}_1^k(t,i,P,\Lambda)
\leq \frac{2(c_3\bar{\epsilon}^2+\bar{c}_2|\Lambda|^2)}{\epsilon},\quad
-\frac{2(c_3\bar{\epsilon}^2+\bar{c}_2|\Lambda|^2)}{\epsilon}
\leq \underline{H}_1^k(t,i,P,\Lambda)\leq 0.
\end{equation*}

Clearly, for any $k$, $\bar{k}\geq1$, $g$, $\overline{H}_1^{\bar{k}}$ and $\underline{H}_1^k$ are uniformly Lipschitz in $(\bm{P},\bm{\Lambda})$, so there exists a unique solution
$(\bm{P}^{k,\bar{k}}(\cdot),\bm{\Lambda}^{k,\bar{k}}(\cdot))=\big(P^{k,\bar{k}}(\cdot,i), \Lambda^{k,\bar{k}}(\cdot,i)\big)_{i\in \mathcal{M}}$
in the space $L^2_{\mathcal{F}}(0,T;\mathbb{R}^{l}) \times L^2_{\mathcal{F}}(0,T;\mathbb{R}^{n\times l}) $
to the following BSDE:
\begin{align*} 
\begin{cases}
\dd P^{k,\bar{k}}(t,i)=-\big[g(t,i,\bm{P}^{k,\bar{k}}(t),\Lambda^{k,\bar{k}}(t,i))
+\overline{H}_1^{\bar{k}} (t,i,P^{k,\bar{k}}(t,i),\Lambda^{k,\bar{k}}(t,i)) \\
\qquad\qquad\qquad\qquad+\underline{H}_1^k(t,i,P^{k,\bar{k}}(t,i),\Lambda^{k,\bar{k}}(t,i)) \big]\dt
+\Lambda^{k,\bar{k}}(t,i)^{\top}\dw(t),\\
~P^{k,\bar{k}}(T,i)=G(i), \text{ for all } i\in \mathcal{M}.
\end{cases}
\end{align*}
Because $\underline{H}_1^k$ is decreasing to $\underline{H}_1$ as $k\to\infty$, by the comparison theorem for multidimensional BSDEs in \cite{Hu Peng}, we obtain
$P^{k,\bar{k}}(\cdot,i)$ is decreasing w.r.t. $k$ for any fixed $i\in \mathcal{M}$ and $\bar{k}\geq1$.
Similarly, it is increasing w.r.t. $\bar{k}$ for any fixed $i\in \mathcal{M}$ and $k\geq1$.

For $(t, i,\bm{P},\Lambda)\in [0,T]\times \mathcal{M}\times \mathbb{R}^l\times \mathbb{R}^n$, we define
\begin{gather*}
\underline{g}(t,i,\bm{P}, \Lambda)=c_1\sum_{j=1}^l P_j+2C(t,i)^{\top}\Lambda
-\overline{K}-\frac{2(c_3\bar{\epsilon}^2+c_2|\Lambda|^2)}{\epsilon}, \\[-4pt]
\overline{g}(t,i,\bm{P}, \Lambda)=c_1\sum_{j=1}^l P_j+2C(t,i)^{\top}\Lambda
+\overline{K}+\frac{2(c_3\bar{\epsilon}^2+c_2|\Lambda|^2)}{\epsilon}, \\[-4pt]
-\underline{P}(t,i)=\overline{P}(t,i)=\overline{G} e^{c_1 l(T-t)}
+ (\overline{K}\epsilon+2c_3\bar{\epsilon}^2) ( e^{c_1 l(T-t)}-1 )/ (c_1 l\epsilon).
\end{gather*}
Clearly, for all $(t,i)\in [0,T]\times\mathcal{M}$,
$$-\bar{\epsilon}= \underline{P}(0,i) \leq \underline{P}(t,i), ~~\overline{P}(t,i) \leq \overline{P}(0,i)= \bar{\epsilon}.$$
Moreover, $\big(\underline{P}(\cdot,i),\bm{0}\big)_{i\in \mathcal{M}}$ and $\big(\overline{P}(\cdot,i),\bm{0}\big)_{i\in \mathcal{M}}$ satisfy BSDEs
\begin{equation*}
\left\{
\begin{aligned}
\dd\underline{P}(t,i)&=-\underline{g}(t,i,\bm{\underline{P}}(t), \underline{\Lambda}(t,i))\dt+\underline{\Lambda}(t,i)^{\top}\dw(t),\\
\underline{P}(T,i)&=-\overline{G}, \text{ for all } i\in \mathcal{M},
\end{aligned}
\right.
\end{equation*}
and
\begin{equation*}
\left\{
\begin{aligned}
\dd\overline{P}(t,i)&=-\overline{g}(t,i,\bm{\overline{P}}(t), \overline{\Lambda}(t,i))\dt+\overline{\Lambda}(t,i)^{\top}\dw(t),\\
\overline{P}(T,i)&=\overline{G}, \text{ for all } i\in \mathcal{M}.
\end{aligned}
\right.
\end{equation*} 
For any $k,\bar{k}\geq 1$ and all $(t,i,\Lambda)\in [0,T]\times\mathcal{M}\times \mathbb{R}^n$, we have
\begin{equation*} \begin{aligned}
\underline{g}(t,i,\underline{\bm{P}}(t), \Lambda)&\leq
g(t,i,\underline{\bm{P}}(t), \Lambda)+ \overline{H}_1^{\bar{k}}(t,i,\underline{P}(t,i), \Lambda)
+\underline{H}_1^{k}(t,i,\underline{P}(t,i), \Lambda),\\
g(t,i,\overline{\bm{P}}(t), \Lambda)&+ \overline{H}_1^{\bar{k}}(t,i,\overline{P}(t,i), \Lambda) +\underline{H}_1^{k}(t,i,\overline{P}(t,i), \Lambda)
\leq \overline{g}(t,i,\overline{\bm{P}}(t), \Lambda).
\end{aligned}\end{equation*}
Then, by the comparison theorem for multidimensional BSDEs in \cite{Xu AAP}, we have
$$-\bar{\epsilon} \leq\underline{P}(\cdot,i) \leq P^{k,\bar{k}}(\cdot,i)\leq \overline{P}(\cdot,i)\leq \bar{\epsilon},\quad
\text{for any }\, k,\bar{k}\geq 1 \text{ and }\, i\in\mathcal{M}. $$
By monotonicity, we can define $P^{\bar{k}}(\cdot,i) = \lim\limits_{k\rightarrow \infty}P^{k,\bar{k}}(\cdot,i)$. Then $P^{\bar{k}}(\cdot,i)\in \interval$. Regarding
$(P^{k,\bar{k}}(\cdot,i), \Lambda^{k,\bar{k}}(\cdot,i))$ as the solution of a scalar-valued quadratic BSDE for each $i\in \mathcal{M}$, by \cite[Lemma 9.6.6]{Zhang book},
for each $\bar{k}\geq1$, there exists a process
$\bm{\Lambda}^{\bar{k}}(\cdot) \in L^2_{\mathcal{F}}(0, T;\mathbb{R}^{n\times l})$ such that $(\bm{P}^{\bar{k}}(\cdot),\bm{\Lambda}^{\bar{k}}(\cdot) )$ is a solution to the following BSDE:
\begin{equation*} 
\left\{
\begin{aligned}
\dd P^{\bar{k}}(t,i)=\,&-\big[g(t,i,\bm{P}^{\bar{k}}(t),\Lambda^{\bar{k}}(t,i))
+\overline{H}_1^{\bar{k}} (t,i,P^{\bar{k}}(t,i),\Lambda^{\bar{k}}(t,i)) \\
&\qquad+\underline{H}_1(t,i,P^{\bar{k}}(t,i),\Lambda^{\bar{k}}(t,i)) \big]\dt
+\Lambda^{\bar{k}}(t,i)^{\top}\dw(t),\\
P^{\bar{k}}(T,i)=\,&G(i), \text{ for all } i\in \mathcal{M}.
\end{aligned}
\right.
\end{equation*}
Recall that $P^{k,\bar{k}}(\cdot,i)$ is increasing w.r.t. $\bar{k}$, so we get $P^{\bar{k}}(\cdot,i)$ is increasing w.r.t. $\bar{k}$.
Hence, we can define $P(\cdot,i) = \lim\limits_{\bar{k}\rightarrow \infty}P^{\bar{k}}(\cdot,i)$.
By \cite[Lemma 9.6.6]{Zhang book} again, there exists a process
$\bm{\Lambda}(\cdot) \in L^2_{\mathcal{F}}(0, T;\mathbb{R}^{n\times l})$ such that
$(\bm{P}(\cdot),\bm{\Lambda}(\cdot))=\big(P(\cdot,i),\Lambda(\cdot,i)\big)_{i\in \mathcal{M}}$ satisfies
\begin{equation*}
\left\{
\begin{aligned}
\dd P(t,i)=\,&-\big[ g(t,i,\bm{P}(t),\Lambda(t,i))
+\overline{H}_1 (t,i,P(t,i),\Lambda(t,i)) \\
&\qquad+\underline{H}_1(t,i,P(t,i),\Lambda(t,i) )\big]\dt +\Lambda(t,i)^{\top}\dw(t),\\
P(T,i)=&\,G(i), \,
P(\cdot,i)\in \interval, \, \text{ for all } \, i\in \mathcal{M}.
\end{aligned}
\right.
\end{equation*}
Notice $P(\cdot,i)\in\interval$, so $\overline{H}_1 +\underline{H}_1={H}_1$ for all $i\in\mathcal{M}$.
We see $(\bm{P}(\cdot),\bm{\Lambda}(\cdot))$ indeed satisfies \eqref{304}.
This established the existence of the solution to \eqref{304}.
\end{proof}

Some coefficients in \eqref{305} depend on $\Lambda$, so they are unbounded.
To prove the solvability of \eqref{305}, we need a more precise estimate on $\Lambda$.
Recall the definition of BMO martingale.
For any process $f\in L^{2}_{\mathcal{F}^W}(0,T;\mathbb{R}^n)$, the process $\int_0^{\cdot} f(s)^{\top}\dw(s)$ is a BMO martingale if there exists a positive constant $c$ such that
$$ \mathbb{E}\bigg[ \int_{\tau}^T |f(s)|^2\ds\;\bigg| \;\mathcal{F}^W_{\tau} \bigg]\leq c$$
hold for all $\mathcal{F}^W_t$-stopping times $\tau \leq T$.
From now on, we use $c$ to represent a positive constant independent of $i$ and $t$, which can be different from line to line.
We set
\begin{equation*} 
\begin{aligned}
L^{2,\mathrm{BMO}}_{\mathcal{F}^W}(0,T;\mathbb{R}^n)
=\,&\bigg\{f \in L^{2}_{\mathcal{F}^W}(0,T;\mathbb{R}^n)\;\bigg|
\int_0^{\cdot} f(s)^{\top}\dw(s) \text{ is a BMO martingale}\bigg\}.
\end{aligned}\end{equation*}
For more details about BMO martingale, interested readers can refer to \cite{Kazamaki}.

\begin{lemma} \label{BMO}
Let $(\bm{P}(\cdot),\bm{\Lambda}(\cdot))=\big(P(\cdot,i),\Lambda(\cdot,i)\big)_{i\in \mathcal{M}}$ be a solution of \eqref{304}. Then we have $\Lambda(\cdot,i)\in L^{2,\mathrm{BMO}}_{\mathcal{F}^W}(0,T;\mathbb{R}^n)$ for all $i\in \mathcal{M}$.
\end{lemma}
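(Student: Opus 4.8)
The plan is to apply Itô's formula to $P(t,\alpha_t)^2$ (or rather to work componentwise with each $P(\cdot,i)^2$) and read off a quadratic estimate for $\Lambda(\cdot,i)$ from the resulting identity, exploiting that $P(\cdot,i)$ is already known to be bounded in $\interval$ by Theorem \ref{theorem:solvability of ESRE}. First I would fix $i\in\mathcal{M}$, an $\mathcal{F}^W_t$-stopping time $\tau\le T$, and apply Itô's formula to $P(t,i)^2$ on $[\tau,T]$. Using the dynamics \eqref{304}, the drift produces a term $2P(t,i)\,\Lambda(t,i)^\top C(t,i)$ plus $2P(t,i)$ times the remaining bounded-or-quadratic pieces, together with the martingale part $2P(t,i)\Lambda(t,i)^\top\dw(t)$; the quadratic variation contributes $+|\Lambda(t,i)|^2\dt$. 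Rearranging and taking conditional expectation $\mathbb{E}[\,\cdot\,|\mathcal{F}^W_\tau]$ kills the martingale term (here I would note that $\int_0^\cdot P(s,i)\Lambda(s,i)^\top\dw(s)$ is a true martingale since $P$ is bounded and $\Lambda\in L^2_{\mathcal{F}^W}$, so no localization subtlety beyond a routine stopping argument is needed), leaving
\[
\mathbb{E}\Big[\int_\tau^T |\Lambda(s,i)|^2\,\ds\;\Big|\;\mathcal{F}^W_\tau\Big]
= \mathbb{E}\big[P(T,i)^2\,|\,\mathcal{F}^W_\tau\big]-P(\tau,i)^2
+\mathbb{E}\Big[\int_\tau^T 2P(s,i)\,\Phi(s,i)\,\ds\;\Big|\;\mathcal{F}^W_\tau\Big],
\]
where $\Phi$ denotes the full drift in \eqref{304}.

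Next I would bound the right-hand side uniformly in $\tau$ and $i$. The boundary and initial terms are controlled by $2\bar\epsilon^2$ since $|P(\cdot,i)|\le\bar\epsilon$ and $|G(i)|\le\overline G$ (and in fact $\overline G\le\bar\epsilon$ from the definition of $\bar\epsilon$). In the drift integral, every summand is either bounded ($K$, the $P[2A+C^\top C]$ term, the $\sum_j q_{ij}P(\cdot,j)$ term — all controlled by $\bar\epsilon$, $c_1$, $\overline K$) or is linear in $\Lambda$ (the $2P C^\top\Lambda$ term) or is $H_1$, for which Remark \ref{rem:growth} gives $|H_1|\le \tfrac{2}{\epsilon}(c_3\bar\epsilon^2+\bar c_2|\Lambda(s,i)|^2)$. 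Collecting terms, and using $|P(s,i)|\le\bar\epsilon$, the only genuinely dangerous contribution is $\tfrac{4\bar\epsilon\bar c_2}{\epsilon}|\Lambda(s,i)|^2$ from $2P H_1$, together with $\tfrac{4\bar c_2}{\epsilon}|\Lambda|^2$-type terms; the linear term $|2PC^\top\Lambda|\le c\,|\Lambda|\le c(1+|\Lambda|^2)$ can be absorbed with a constant. Hence there are constants $a\in(0,1)$ and $c>0$ (independent of $i,\tau$) with
\[
\mathbb{E}\Big[\int_\tau^T |\Lambda(s,i)|^2\,\ds\;\Big|\;\mathcal{F}^W_\tau\Big]
\le c + a\,\mathbb{E}\Big[\int_\tau^T |\Lambda(s,i)|^2\,\ds\;\Big|\;\mathcal{F}^W_\tau\Big],
\]
provided $a<1$. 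Absorbing the $\Lambda$-term to the left gives the BMO bound $\mathbb{E}[\int_\tau^T|\Lambda(s,i)|^2\ds\,|\,\mathcal{F}^W_\tau]\le c/(1-a)$, uniformly over stopping times, which is exactly $\Lambda(\cdot,i)\in L^{2,\mathrm{BMO}}_{\mathcal{F}^W}(0,T;\mathbb{R}^n)$.

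The main obstacle is making the coefficient $a$ strictly less than $1$: the $|\Lambda|^2$ coefficient coming out of $2P\cdot H_1$ is of order $\tfrac{\bar\epsilon\bar c_2}{\epsilon}$, so one needs the structural inequalities among $\bar c_2,\epsilon,\bar\epsilon$ to force this below $1$. This is precisely where Assumption \ref{ass3} ($2\bar c_2<\epsilon$), the definition $\bar\epsilon=\tfrac{c_1 l\epsilon}{4c_3(e^{c_1lT}-1)}$, and the smallness of $\epsilon$ enter; I would carry out the bookkeeping carefully (possibly first on a short subinterval $[T-\delta,T]$ and then patch finitely many intervals together, a standard device when the one-shot constant is not obviously $<1$) so that the absorption is legitimate on all of $[0,T]$. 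One should also double-check the linear term: rather than $c(1+|\Lambda|^2)$ it is cleaner to use Young's inequality $|2P C^\top\Lambda|\le \tfrac1{2\delta}|PC|^2+\tfrac{\delta}{2}|\Lambda|^2$ with $\delta$ small, so that its $|\Lambda|^2$ contribution is an arbitrarily small fraction and does not spoil $a<1$. Everything else is routine.
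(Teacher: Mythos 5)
Your proposal follows essentially the same route as the paper's own proof: apply It\^{o}'s formula to $P(\cdot,i)^2$, condition on $\mathcal{F}^W_\tau$ for an arbitrary stopping time $\tau$, bound the drift via the boundedness of $P$ and the estimate on $H_1$ from Remark \ref{rem:growth}, and absorb the quadratic $\Lambda$-term using Assumption \ref{ass3}. Your worry about the absorption constant is actually sharper than the paper's own treatment, which records the coefficient of $|\Lambda(t,i)|^2$ as $2\bar{c}_2/\epsilon$ and thereby suppresses the factor $2|P|\leq 2\bar{\epsilon}$ multiplying $H_1$; your use of Young's inequality on the linear term and your fallback of patching short intervals are sensible ways to keep that bookkeeping honest.
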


\begin{proof}
For any $i\in \mathcal{M}$, applying It\^{o}'s formula to $P(\cdot,i)^2$, we get
\begin{equation*} \begin{aligned}
&\mathbb{E}\bigg[ \int_{\tau}^T |\Lambda|^2\dt\;\bigg| \; \mathcal{F}^{W}_{\tau}\bigg]
-\mathbb{E}\big[G(i)^2\;\big|\;\mathcal{F}^{W}_{\tau}\big]+P(\tau,i)^2\\
=\,&\mathbb{E}\bigg[ \int_{\tau}^T \Big(2P\big[P(2A+C^{\top}C) +K+2C^{\top}\Lambda+H_1(P,\Lambda)
+\sum_{j\in \mathcal{M}} q_{ij}P(t,j)\big]\Big)\dt \;\bigg| \; \mathcal{F}^{W}_{\tau} \bigg]\\
\leq\,& c+\mathbb{E}\bigg[ \int_{\tau}^T \Big(c|\Lambda|+\frac{ 2\bar{c}_2|\Lambda|^2}{\epsilon} \Big)\dt
\;\bigg| \; \mathcal{F}^{W}_{\tau} \bigg],
\end{aligned}\end{equation*}
where $\tau \leq T$ is any $\mathcal{F}^{W}_t$-stopping time. By Assumption \ref{ass3}, there exists a constant $a_1$ such that $1-\frac{2\bar{c}_2}{\epsilon}>a_1>0$. Then
$$\mathbb{E}\bigg[ \int_{\tau}^T |\Lambda|^2\dt \;\bigg| \; \mathcal{F}^{W}_{\tau}\bigg]
\leq c+\mathbb{E}\bigg[ \int_{\tau}^T \bigg(a_1|\Lambda|^2+\frac{ 2\bar{c}_2|\Lambda|^2}{\epsilon} \bigg)\dt\;\bigg| \; \mathcal{F}^{W}_{\tau} \bigg].$$
It leads to the bound
$$\mathbb{E}\bigg[ \int_{\tau}^T |\Lambda(t,i)|^2\dt \;\bigg| \; \mathcal{F}^{W}_{\tau}\bigg]\leq \frac{c}{1-\frac{2\bar{c}_2}{\epsilon}-a_1},$$
so $\Lambda(\cdot,i)\in L^{2,\mathrm{BMO}}_{\mathcal{F}^W}(0,T;\mathbb{R}^n)$.
\end{proof}

\begin{theorem}[Solvability of \eqref{305}] \label{theorem:solvability of LBSDE}
The linear BSDE \eqref{305} with unbounded coefficients has a unique solution $(\bm{\varphi}(\cdot),\bm{\Delta}(\cdot))=\big(\varphi(\cdot,i),\Delta(\cdot,i)\big)_{i\in \mathcal{M}}$ and for all $i\in \mathcal{M}$, $(\varphi(\cdot,i),\Delta(\cdot,i))\in L_{\mathcal{F}^W}^{\infty}(0,T;\mathbb{R})\times L_{\mathcal{F}^W}^{2,\mathrm{BMO}}(0,T;\mathbb{R}^{n})$.
\end{theorem}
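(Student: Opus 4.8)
The plan is to read \eqref{305} as a multidimensional \emph{linear} BSDE whose coefficients, though random and unbounded, are controlled in the BMO sense, and then to invoke the existence-and-uniqueness theory for linear BSDEs with unbounded coefficients of \cite{Xu arXiv1} after checking that our data fit that framework (a self-contained change-of-measure argument also works, and I indicate it below). First I would make the linear structure explicit: since $\widehat C$ and $\widehat R$ do not involve $(\varphi,\Delta)$ whereas $\hat\sigma=\varphi B+D^\top(P\sigma+\Delta)$ is affine in $(\varphi,\Delta)$, the term $H_2=-\widehat C^\top\widehat R^{-1}\hat\sigma$ splits and \eqref{305} becomes, for each $i\in\mathcal M$,
\begin{equation*}
\left\{
\begin{aligned}
\dd\varphi(t,i)&=-\Big[\eta(t,i)\varphi(t,i)+\theta(t,i)^\top\Delta(t,i)+\sum_{j\in\mathcal M}q_{ij}\varphi(t,j)+\zeta(t,i)\Big]\dt+\Delta(t,i)^\top\dw(t),\\
\varphi(T,i)&=0,
\end{aligned}
\right.
\end{equation*}
with $\eta:=A-\widehat C^\top\widehat R^{-1}B$, $\theta:=C-D\widehat R^{-1}\widehat C$ and $\zeta:=P[b+C^\top\sigma]+\sigma^\top\Lambda-\widehat C^\top\widehat R^{-1}D^\top P\sigma$. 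Using that $\widehat R^{-1}$ is bounded on $\interval$ (Remark \ref{rem:invertible}) and that $|\widehat C(t,i)|\leq c(\bar\epsilon+|\Lambda(t,i)|)$, together with the boundedness of the original data and of $\bm P$, I would obtain $|\eta|+|\theta|+|\zeta|\leq c(1+|\Lambda(t,i)|)$; by Lemma \ref{BMO} this makes $\int_0^{\cdot}\theta(s,i)^\top\dw(s)$ a BMO martingale and gives $\mathbb E[\int_\tau^T(|\eta(s,i)|+|\zeta(s,i)|^2)\ds\,|\,\mathcal F^W_\tau]\leq c$ for every $\mathcal F^W_t$-stopping time $\tau$, i.e. \eqref{305} is exactly a linear BSDE with BMO coefficients coupled through the bounded matrix $Q$.

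For uniqueness I would take the difference of two solutions, which solves the same linear system with zero terminal value and zero free term, and test it with itself: using the BMO property of $\theta$ (through the energy inequality, or an equivalent change of measure that kills the $\theta$-term) together with the boundedness of $Q$ and Gronwall's inequality, the difference must vanish. For existence I would either quote the relevant theorem of \cite{Xu arXiv1}, whose hypotheses are verified by $(\eta,\theta,\zeta,Q)$ above, or argue directly: the stochastic exponential of $\int_0^{\cdot}\theta(s,i)^\top\dw(s)$ is a uniformly integrable martingale (BMO property) defining an equivalent measure $\mathbb Q_i$ under which the $\theta$-term disappears; on a short enough subinterval the map that freezes the coupling $\sum_j q_{ij}\varphi(\cdot,j)$ and the free term is a contraction on $L^\infty_{\mathcal F^W}\times L^{2,\mathrm{BMO}}_{\mathcal F^W}$ — the reverse H\"older inequality for BMO martingales (see \cite{Kazamaki}) controlling the passage between $\mathbb P$ and $\mathbb Q_i$ — and one patches finitely many subintervals.

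For the regularity claims, once a solution is at hand its representation
\begin{equation*}
\varphi(t,i)=\mathbb E^{\mathbb Q_i}\!\bigg[\int_t^T e^{\int_t^s\eta(r,i)\dr}\Big(\zeta(s,i)+\sum_{j\in\mathcal M}q_{ij}\varphi(s,j)\Big)\ds\;\bigg|\;\mathcal F^W_t\bigg]
\end{equation*}
(the boundary term vanishes since $\varphi(T,i)=0$), combined with the reverse H\"older inequality to move between $\mathbb Q_i$ and $\mathbb P$ and a Garsia-type exponential integrability of $\int_0^T|\eta(\cdot,i)|\dr$ and $\int_0^T|\zeta(\cdot,i)|\dr$ (both from the conditional $L^1$-estimates above), gives $\varphi(\cdot,i)\in L^\infty_{\mathcal F^W}(0,T;\mathbb R)$; a bootstrap then makes $\sum_j q_{ij}\varphi(\cdot,j)$ bounded too, and applying It\^{o}'s formula to $\varphi(\cdot,i)^2$ as in the proof of Lemma \ref{BMO} (all terms but $|\Delta(\cdot,i)|^2$ integrate against the BMO bounds and the last one is absorbed) yields $\Delta(\cdot,i)\in L^{2,\mathrm{BMO}}_{\mathcal F^W}(0,T;\mathbb R^n)$. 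The hard part throughout is the genuine unboundedness of the coefficients: the free term $\zeta$ contains $\sigma^\top\Lambda$, which is only BMO, so the boundedness of $\varphi$ cannot come from an elementary bound and really requires the BMO toolbox (reverse H\"older, John--Nirenberg / Garsia exponential integrability) to control the density of $\mathbb Q_i$ and the integrating factor $e^{\int\eta}$ simultaneously; by contrast the coupling through $Q$ is routine, handled by a Picard/Gronwall iteration in the state variable since $Q$ is bounded.
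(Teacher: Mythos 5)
Your proposal is correct and follows essentially the same route as the paper: the paper's entire proof of this theorem is the single line ``By Lemma 3.3 and Theorem 3.6 in \cite{Xu arXiv1}, we can obtain the conclusion,'' i.e.\ it invokes exactly the linear-BSDE-with-BMO-coefficients theory that you identify, and your reduction of \eqref{305} to the form $\dd\varphi=-[\eta\varphi+\theta^{\top}\Delta+\sum_j q_{ij}\varphi_j+\zeta]\dt+\Delta^{\top}\dw$ with $|\eta|+|\theta|+|\zeta|\leq c(1+|\Lambda|)$ and $\Lambda(\cdot,i)\in L^{2,\mathrm{BMO}}_{\mathcal{F}^W}$ (Lemma \ref{BMO}) is precisely the verification of that theorem's hypotheses, which the paper leaves implicit. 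Your additional self-contained sketch (change of measure, reverse H\"older, John--Nirenberg, patching small intervals) goes beyond what the paper records but is consistent with the cited reference's method.
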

\begin{proof}
This is a consequence of \cite[Lemma 3.6]{Xu arXiv1}.
\end{proof}

\subsection{The optimal feedback control-strategy pair}

After solving \eqref{304}-\eqref{305},
we can provide optimal control-strategy pairs to the LQ game \eqref{201}-\eqref{202}.

We set
\begin{equation}\label{301}
\left\{
\begin{aligned}
u_1^{*} (t,i,X(t))&=-\widetilde{R}_{11}^{-1}\big\{\big[ \widehat{C}_{1}- \widehat{R}_{12}\widehat{R}_{22}^{-1}\widehat{C}_{2} \big]X(t)
+\hat{\sigma}_1- \widehat{R}_{12}\widehat{R}_{22}^{-1}\hat{\sigma}_2 \big\},\\
\beta_2^{*}(t,i,u_1(t),X(t))&=- \widehat{R}_{22}^{-1}\big[\widehat{R}_{12}^{\top}u_1(t)+\widehat{C}_{2}X(t)+\hat{\sigma}_2 \big],
\end{aligned}
\right.
\end{equation}
and
\begin{equation}\label{302}
\left\{
\begin{aligned}
u_2^{*} (t,i,X(t))&=-\widetilde{R}_{22}^{-1}\big\{\big[ \widehat{C}_{2}- \widehat{R}_{12}^{\top}\widehat{R}_{11}^{-1}\widehat{C}_{1} \big]X(t)
+\hat{\sigma}_2- \widehat{R}_{12}^{\top}\widehat{R}_{11}^{-1}\hat{\sigma}_1 \big\},\\
\beta_1^{*}(t,i,u_2(t),X(t))&=- \widehat{R}_{11}^{-1}\big[\widehat{R}_{12}u_2(t) +\widehat{C}_{1}X(t)+\hat{\sigma}_1\big],
\end{aligned}
\right.
\end{equation}
where the arguments for $\widetilde{R}_{kk}$, $\widehat{R}_{kk'}$, $\widehat{C}_{k}$, $\hat{\sigma}_{k}$, $k,k'\in\{1,2\}$ are solutions of \eqref{304}-\eqref{305}.

As discussed in \cite{Yu}, it is indeed difficult to prove the adaptability of the optimal control-strategy pairs in the case of random coefficients.
In fact, here we can only obtain $u_1^{*}( \cdot, i, X(\cdot)),\,\beta_1^{*}(\cdot, i,u_2(\cdot),X(\cdot)) \in L_{\mathcal{F}}^{2,\text{loc}}(0,T;\mathbb{R}^{m_1})$
and $u_2^{*}( \cdot, i, X(\cdot))$, $ \beta_2^{*}( \cdot, i, u_1(\cdot), X(\cdot))\in L_{\mathcal{F}}^{2,\text{loc}}(0,T;\mathbb{R}^{m_2})$ for all $i\in \mathcal{M}$.
We now use a common method to deal with optimal control problems with random coefficients, that is, localization method plus some convergence theorems, to obtain their square integrability.

\begin{lemma} \label{lem:admissible}
The feedback control-strategy pair of Player $1$ (resp., Player $2$) defined by \eqref{301} (resp., \eqref{302})
is admissible.
\end{lemma}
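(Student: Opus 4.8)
The plan is to verify the two conditions in Definitions \ref{1'control} and \ref{2'strategy} directly: measurability plus $\mathcal{F}^W_t$-adaptedness of the feedback maps, and well-posedness of the associated closed-loop SDEs together with the requirement that the resulting control processes lie in $\mathcal{U}_1$, $\mathcal{U}_2$. For measurability, observe that $u_1^*$ and $\beta_2^*$ in \eqref{301} (resp.\ $u_2^*$, $\beta_1^*$ in \eqref{302}) are built out of $\widehat R_{kk'}$, $\widetilde R_{kk}$, $\widehat C_k$, $\hat\sigma_k$, which are themselves algebraic (sums, products, matrix inverses) combinations of the bounded coefficients and of $\big(P(\cdot,i),\Lambda(\cdot,i),\varphi(\cdot,i),\Delta(\cdot,i)\big)$. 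Since the coefficients are $\mathcal{F}^W_t$-adapted and the solutions of \eqref{304} and \eqref{305} are $\mathcal{F}^W_t$-adapted (by Theorem \ref{theorem:solvability of ESRE}, Lemma \ref{BMO} and Theorem \ref{theorem:solvability of LBSDE}), and since Remark \ref{rem:invertible} guarantees that $\widehat R_{11}$, $\widehat R_{22}$, $\widetilde R_{11}$, $\widetilde R_{22}$ are invertible with inverses uniformly bounded in operator norm by $1/\epsilon$ whenever $P\in\interval$, the maps $x\mapsto u_1^*(t,\omega,i,x)$ etc.\ are affine in $x$ (and in $u$) with $\mathcal{F}^W_t$-adapted coefficients. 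This yields condition (i) of both definitions.

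Next I would address the closed-loop SDEs. Fix, say, the pair for Player 1 and take any $u_2\in\mathcal{U}_2$. Substituting $u_1(t)=u_1^*(t,\omega,\alpha_t,X(t))$ into \eqref{203}, the drift and diffusion become affine in $X(t)$ of the form $\tilde A(t)X(t)+\tilde b(t)$ and $\tilde C(t)^\top X(t)+\tilde\sigma(t)^\top$, where $\tilde A,\tilde C$ are bounded (since the $\widehat R^{-1}$-factors are bounded by $1/\epsilon$, the $\widehat C_k$ are bounded by Remark \ref{rem:growth}'s ingredients once $\Lambda$ is plugged in, and $\Lambda(\cdot,i)\in L^{2,\mathrm{BMO}}$ so the $\Lambda$-dependent pieces of $\tilde C,\tilde b,\tilde\sigma$ are in $L^{2,\mathrm{BMO}}$), while $\tilde b,\tilde\sigma$ additionally involve $u_2\in L^2_{\mathcal F}$ and the BMO-integrable terms coming from $\Lambda,\Delta$. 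Thus the closed-loop equation is a scalar \emph{linear} SDE with random coefficients whose drift/diffusion coefficients of $X$ are bounded and whose inhomogeneous terms are in $L^2_{\mathcal F}(0,T;\cdot)$ after using the BMO property and the boundedness of $P,\varphi$; standard existence-uniqueness for linear SDEs (e.g.\ \cite[Chapter 6]{Yong Zhou}, or a Gronwall/fixed-point argument) gives a unique $X(\cdot)\in L^2_{\mathcal F}(C(0,T);\mathbb R)$. Finally, $u_1^*(\cdot,\alpha_\cdot,X(\cdot))$ is affine in $X(\cdot)$ with $L^2$-type coefficients, so $\mathbb E\int_0^T|u_1^*(t,\alpha_t,X(t))|^2\dt<\infty$, i.e.\ $u_1^*(\cdot,\alpha_\cdot,X(\cdot))\in\mathcal U_1$; and one checks directly from the non-anticipativity of the construction that $\beta_2^*:u_1\mapsto\Pi(\cdot,\alpha_\cdot,X(\cdot),u_1(\cdot))$ satisfies Definition \ref{def:admissible strategy}, since $X(\cdot)$ on $[0,\tau]$ depends on $u_1$ only through its restriction to $[0,\tau]$. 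The argument for Player 2's pair \eqref{302} is symmetric, with the roles of $\widehat R_{11}$ and $\widehat R_{22}$ interchanged.

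The main obstacle I anticipate is the \emph{integrability bookkeeping in the closed-loop SDE}: the inhomogeneous terms $\hat\sigma_k$ contain $D_k^\top\Delta$ and $D_k^\top P\sigma$, and the feedback gains contain $\widehat R^{-1}\Lambda$-type products, so the diffusion coefficient of the closed-loop equation is not bounded but only BMO. One must therefore argue that a scalar linear SDE $\dd X=(\tilde A X+\tilde b)\dt+(\tilde C X+\tilde\sigma)^\top\dw$ with $\tilde A$ bounded, $\tilde C\in L^{2,\mathrm{BMO}}$, and $\tilde b,\tilde\sigma\in L^2_{\mathcal F}$ still has a unique solution in $L^2_{\mathcal F}(C(0,T);\mathbb R)$ with finite second moment — this is where the BMO estimate of Lemma \ref{BMO} (and the analogous one for $\Delta$ from Theorem \ref{theorem:solvability of LBSDE}) is essential, via the reverse Hölder / energy inequality for BMO martingales to control $\mathbb E\sup_t|X(t)|^2$. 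Everything else (measurability, adaptedness, non-anticipativity, and the final $L^2$-membership of the control) is routine once this estimate is in place.
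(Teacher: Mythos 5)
There is a genuine gap, and it sits exactly at the point you flag as your ``main obstacle'': the $L^2$-membership of the closed-loop control. Your argument that $u_1^{*}(\cdot,\alpha_\cdot,X(\cdot))\in\mathcal{U}_1$ because it is ``affine in $X(\cdot)$ with $L^2$-type coefficients'' does not go through. The $X$-coefficient of the feedback map contains $\widehat{C}_{1}=PB_1+D_1^{\top}(PC+\Lambda)$, hence a term of order $|\Lambda(t,i)|\,|X(t)|$ with $\Lambda(\cdot,i)$ only in $L^{2,\mathrm{BMO}}_{\mathcal{F}^W}$; the same is true of the drift coefficient of $X$ in the closed-loop SDE, so your assertion that $\tilde A$ (and $\widehat C_k$) are bounded is incorrect. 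To conclude $\mathbb{E}\int_0^T|\Lambda|^2|X|^2\dt<\infty$ along your route you would need something like $\sup_t|X(t)|\in L^{2+\delta}$ together with the energy inequality, and for a linear SDE whose $X$-coefficients in both drift and diffusion are merely BMO this requires $\mathbb{E}\exp\big(c\int_0^T|\Lambda|^2\dt\big)<\infty$, which John--Nirenberg delivers only under a smallness condition on the BMO norm that is not available here. So the reverse-H\"older/energy-inequality patch you propose does not close the argument in general.

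The paper avoids this entirely by exploiting the quadratic structure: it first gets a unique strong solution of the closed-loop SDE from Gal'chuk's theorem (pathwise, $X$ is continuous hence bounded on $[0,T]$, but with no a priori moment bound), then applies It\^o's formula to $P(t,\alpha_t)X(t)^2+2\varphi(t,\alpha_t)X(t)$, localizes by stopping times, and uses the completion-of-squares identity together with Assumption \ref{ass2} and Remark \ref{rem:invertible} (namely $R_{11}\leq-(\epsilon+\bar\epsilon\bar c_2)I_{m_1}$, $\widehat R_{22}\geq\epsilon I_{m_2}$, $\widetilde R_{11}\leq-\epsilon I_{m_1}$) to isolate $(\epsilon+\bar\epsilon\bar c_2)\,\mathbb{E}\int_0^{T\wedge\tau_k}|u_1^{*}|^2\ds$ on the left-hand side, absorbing the cross term $2(u_1^*)^\top R_{12}u_2$ by Young's inequality; monotone convergence then yields $u_1^{*}\in L^2$ (and symmetrically for $\beta_2^{*}$, using $\widehat R_{22}$). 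This sign-definiteness argument is the essential idea missing from your proposal; without it, or without an additional smallness hypothesis on $\|\Lambda\|_{\mathrm{BMO}}$, the admissibility claim is not established. Your treatment of measurability, adaptedness, and the non-anticipativity of $\beta_2^{*}$ is fine and matches what the paper (implicitly) does.
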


\begin{proof}
We only prove that \eqref{301} is admissible, since the proof of \eqref{302} is similar.
The proof is divided into two steps:

\textbf{Goal of Step 1.} For each $u_2(\cdot)\in \mathcal{U}_2$, we prove that SDE \eqref{203} has a unique solution with $\pi=u_1^{*}$,
and $u_1^{*}(\cdot,i,X(\cdot))\in L^2_{\mathcal{F}}(0,T; \mathbb{R}^{m_1})$ for all $i\in \mathcal{M}$;

\textbf{Goal of Step 2.} For each $u_1(\cdot)\in \mathcal{U}_1$, we prove that SDE \eqref{204} has a unique solution with $\Pi=\beta_2^{*}$,
and $\beta_2^{*}(\cdot,i,u_1(\cdot),X(\cdot))\in L^2_{\mathcal{F}}(0,T; \mathbb{R}^{m_2})$ for all $i\in \mathcal{M}$.

\textbf{Step 1.} Let $u_2(\cdot)\in \mathcal{U}_2$.
Then SDE \eqref{203} with the control $\pi=u_1^{*}$ is
\begin{equation}\label{307}
\left\{
\begin{aligned}
\dd X(t)=\,&\big[ AX(t)+B_1^{\top}u_1^{*}(t,\alpha_t,X(t))+B_2^{\top}u_2(t)+b \big]\dt\\
&+ \big[ CX(t)+D_1u_1^{*}(t,\alpha_t,X(t))+D_2u_2(t) +\sigma \big]^{\top}\dw(t),\\
X(0)=\,&x\in\mathbb{R},\quad \alpha_0=i_0\in \mathcal{M}.
\end{aligned}
\right.
\end{equation}
For all $i\in \mathcal{M}$, we have
$|u_1^{*} (t,i,X(t))|\leq c \big[ (1+|\Lambda(t,i)|)|X(t)|+1+|\Delta(t,i)|\big]$ and $\Lambda(\cdot,i),\Delta(\cdot,i)\in L_{\mathcal{F}^W}^{2,\mathrm{BMO}}(0,T;\mathbb{R}^{n})$.
By the basic theorem on pp. 756-757 of \cite{L. I. Gal'Chuk}, the above SDE has a unique strong solution.
As $X(\cdot)$ is continuous for almost all sample paths, it is almost surely bounded on $[0, T]$, which guarantees that
$$\int_0^T |u_1^{*}(t,\alpha_t,X(t))| ^2\dt<\infty.$$

By It\^{o}'s lemma for Markovian chain (see \cite{Hu Liang Tang}), we have
\begin{align*}
\dd P(t,\alpha_t)
=\,&-\big[K+P(t,\alpha_t)(2A+C^{\top}C)+2C^{\top}\Lambda(t,\alpha_t) +H_1(P(t,\alpha_t),\Lambda(t,\alpha_t)) \big]\dt\\
&+\Lambda(t,\alpha_t)^{\top}\dw(t) +\sum_{i,j\in \mathcal{M}} \big[P(t,j)-P(t,i)\big]I_{\{\alpha_{t-}=i\}} \dd\tilde{N}_{ij}(t), \\
\dd \varphi(t,\alpha_t)
=\,&-\big[P(t,\alpha_t)(b+C^{\top}\sigma)+\sigma^{\top}\Lambda(t,\alpha_t)
+A\varphi(t,\alpha_t) \\
&+C^{\top}\Delta(t,\alpha_t)
+H_2(P(t,\alpha_t),\Lambda(t,\alpha_t),\varphi(t,\alpha_t),\Delta(t,\alpha_t)) \big]\dt\\
&+\Delta(t,\alpha_t)^{\top}\dw(t)
+\sum_{i,j\in \mathcal{M}} \big[\varphi(t,j)-\varphi(t,i)\big]I_{\{\alpha_{t-}=i\}} \dd\tilde{N}_{ij}(t),
\end{align*}
where $N_{ij}(t)$, $i,j\in \mathcal{M}$, are independent Poisson processes with intensity $q_{ij}$, and $\tilde{N}_{ij}(t)=N_{ij}(t)- q_{ij}t$ are the corresponding compensated Poisson martingales under the filtration $\mathcal{F}_{t}$.

Applying It\^{o}'s formula to $ P(t,\alpha_t)X(t)^2+2\varphi(t,\alpha_t)X(t)$, where $X(\cdot)$ is the solution of \eqref{307},  
we get
\begin{equation*}
\begin{aligned}
&P(t,\alpha_t)X(t)^2+2\varphi(t,\alpha_t)X(t) -P(0,i_0)x^2-2\varphi(0,i_0)x\\
&+\int_0^t \Big(KX^2+(u_1^{*})^{\top}R_{11}u_1^{*} +2(u_1^{*})^{\top}R_{12}u_2+u_2^{\top}R_{22}u_2 \Big)\ds\\
=\,&\int_0^t \Big(\big[u_2-\beta_2^{*}(u_1^{*}) \big]^{\top} \widehat{R}_{22} \big[u_2-\beta_2^{*}(u_1^{*}) \big]
+P\sigma^{\top}\sigma+2( \varphi b+\sigma^{\top}\Delta ) +H_3(P,\varphi,\Delta) \Big)\ds\\
&+\int_0^t \Big(2(PX+\varphi) ( CX+D_1u_1^{*} +D_2u_2+\sigma )
+ X^2\Lambda+2X\Delta \Big)^{\top}\dw(s)\\
&+ \int_0^t \Big(X^2 \sum_{i,j\in \mathcal{M}} \big[P(s,j)-P(s,i)\big]I_{\{\alpha_{s-}=i\}} + 2X \sum_{i,j\in \mathcal{M}} \big[\varphi(s,j)-\varphi(s,i)\big]I_{\{\alpha_{s-}=i\}}\Big) \dd\tilde{N}_{ij}(s).
\end{aligned}
\end{equation*}
The stochastic integrals in above equation are local martingales, so there exists an increasing sequence of stopping times $\{\tau_k\}$ such that $\tau_k\rightarrow +\infty$ as $k\rightarrow \infty$, and
\begin{equation*}
\begin{aligned}
&\mathbb{E}\big[P(T\wedge\tau_k)X(T\wedge\tau_k)^2+2\varphi(T\wedge\tau_k)X(T\wedge\tau_k) \big]-P(0,i_0)x^2-2\varphi(0,i_0)x\\
&\quad\;+\mathbb{E}\bigg[\int_0^{T\wedge\tau_k} \Big(KX^2+(u_1^{*})^{\top}R_{11}u_1^{*} +2(u_1^{*})^{\top}R_{12}u_2
+u_2^{\top}R_{22}u_2 \Big)\ds\bigg]\\
=\,& \mathbb{E}\bigg[\int_0^{T\wedge\tau_k}
\Big(\big[u_2-\beta_2^{*}(u_1^{*}) \big]^{\top} \widehat{R}_{22} \big[u_2-\beta_2^{*}(u_1^{*}) \big]
+P\sigma^{\top}\sigma+2( \varphi b+\sigma^{\top}\Delta ) +H_3 \Big)\ds\bigg].
\end{aligned}
\end{equation*}
From Assumption \ref{ass2} and Remark \ref{rem:invertible}, we obtain
\begin{equation*}
\begin{aligned}
&(\epsilon+\bar{\epsilon} \bar{c}_2)\mathbb{E}\bigg[ \int_0^{T\wedge\tau_k} |u_1^{*}|^2\ds\bigg]+P(0,i_0)x^2+2\varphi(0,i_0)\\
\leq\,&\mathbb{E}\bigg[ \int_0^{T\wedge\tau_k} \Big(KX^2+u_2^{\top}R_{22}u_2
-P\sigma^{\top}\sigma-2( \varphi b+\sigma^{\top}\Delta )-H_3 \Big)\ds \bigg] \\
&+\frac{\epsilon+\bar{\epsilon} \bar{c}_2}{2}\mathbb{E}\bigg[ \int_0^{T\wedge\tau_k} |u_1^{*}|^2\ds\bigg]
+\frac{2}{\epsilon+\bar{\epsilon} \bar{c}_2}\mathbb{E}\bigg[ \int_0^{T\wedge\tau_k} |R_{12}u_2|^2\ds\bigg]\\
&+\mathbb{E}\big[P(T\wedge\tau_k)X(T\wedge\tau_k)^2+2\varphi(T\wedge\tau_k)X(T\wedge\tau_k) \big].
\end{aligned}
\end{equation*}
Letting $k\rightarrow \infty$ and applying the monotone and dominated convergence theorems,
we obtain $u_1^{*}(\cdot,i,X(\cdot))\in L^2_{\mathcal{F}}(0,T; \mathbb{R}^{m_1})$ for all $i \in \mathcal{M}$.

\textbf{Step 2.} Let $u_1(\cdot)\in \mathcal{U}_1$. Then SDE \eqref{204} with the strategy $\Pi=\beta_2^{*}$ is
\begin{equation}\label{308}
\left\{
\begin{aligned}
\dd X(t)=\,&\big[ AX(t)+B_1^{\top}u_1(t)+B_2^{\top}\beta_2^{*}(t,\alpha_t,u_1(t),X(t))+b \big]\dt\\
&+ \big[ CX(t)+D_1u_1(t)+D_2\beta_2^{*}(t,\alpha_t,u_1(t),X(t)) +\sigma \big]^{\top}\dw(t),\\
X(0)=\,&x\in\mathbb{R},\quad \alpha_0=i_0\in \mathcal{M}.
\end{aligned}
\right.
\end{equation}
For all $i\in \mathcal{M}$, we have
$|\beta_2^{*} (t,\omega,i,u_1(t),X(t))|\leq c \big[(1+|\Lambda(t,i)|)|X(t)|+1+|\Delta(t,i)|+|u_1(t)|\big]$.
By the basic theorem on pp. 756-757 of \cite{L. I. Gal'Chuk}, SDE \eqref{308} has a unique strong solution.
And we have 
$$\int_0^T |\beta_2^{*}(t,\alpha_t,u_1(t),X(t))| ^2\dt<\infty.$$

Similar to Step 1, applying It\^{o}'s formula to $ P(t,\alpha_t)X(t)^2+2\varphi(t,\alpha_t)X(t)$, where $X(\cdot)$ is the solution of \eqref{308},  we get
\begin{equation*}
\begin{aligned}
&\mathbb{E}\big[P(T\wedge\tau_k)X(T\wedge\tau_k)^2+2\varphi(T\wedge\tau_k)X(T\wedge\tau_k) \big]-P(0,i_0)x^2-2\varphi(0,i_0)x\\
&+\mathbb{E}\bigg[ \int_0^{T\wedge\tau_k} \Big(KX^2+u_1^{\top}R_{11}u_1 +2u_1^{\top}R_{12}\beta_2^{*}
+(\beta_2^{*})^{\top}R_{22}\beta_2^{*} \Big)\ds\bigg]\\
=\,&\mathbb{E}\bigg[ \int_0^{T\wedge\tau_k} \Big((u_1-u_1^{*})^{\top} \widetilde{R}_{11} (u_1-u_1^{*})
+P\sigma^{\top}\sigma+2(\varphi b+\sigma^{\top}\Delta)+H_3 \Big)\ds\bigg].
\end{aligned}
\end{equation*}
From Assumption \ref{ass2} and Remark \ref{rem:invertible}, we obtain
\begin{equation*}
\begin{aligned}
&(\epsilon+\bar{\epsilon} \bar{c}_2)\mathbb{E}\bigg[ \int_0^{T\wedge\tau_k} |\beta_2^{*}|^2\ds \bigg]-P(0,i_0)x^2-2\varphi(0,i_0)x\\ 
\leq\,& \mathbb{E}\bigg[ \int_0^{T\wedge\tau_k} \Big(-KX^2-u_1^{\top}R_{11}u_1
+P\sigma^{\top}\sigma+2( \varphi b+\sigma^{\top}\Delta )+H_3 \Big)\ds \bigg]\\
&+\frac{\epsilon+\bar{\epsilon} \bar{c}_2}{2}\mathbb{E}\bigg[ \int_0^{T\wedge\tau_k} |\beta_2^{*}|^2\ds\bigg]
+\frac{2}{\epsilon+\bar{\epsilon} \bar{c}_2}\mathbb{E}\bigg[ \int_0^{T\wedge\tau_k} |R_{12}^{\top}u_1|^2\ds\bigg]\\
& -\mathbb{E}\big[ P(T\wedge\tau_k)X(T\wedge\tau_k)^2+2\varphi(T\wedge\tau_k)X(T\wedge\tau_k) \big] .
\end{aligned}
\end{equation*}
Letting $k\rightarrow \infty$ in above, by the monotone and dominated convergence theorems, we obtain that $\beta_2^{*}(\cdot,i,u_1(\cdot),X(\cdot))\in L^2_{\mathcal{F}}(0,T; \mathbb{R}^{m_2})$ for all $i \in \mathcal{M}$.
\end{proof}

\begin{lemma} \label{lem:optimality}
Define the admissible feedback control-strategy pair $(u_1^{*},\beta_2^{*})$ of Player 1 by \eqref{301}. Then we have the following:
\begin{enumerate}[(a)]
\item $J_{x,i_0}( u_1,\beta_2^{*}(u_1)) \leq J_{x,i_0}( u_1,\beta_2(u_1))$ for any $u_1 \in \mathcal{U}_1$ and $\beta_2 \in \mathcal{A}_2$. Moreover, the equation holds if and only if $\beta_2(u_1) = \beta_2^{*}(u_1)$;
\item $J_{x,i_0}( u_1^{*},\beta_2^{*}(u_1^{*})) \geq J_{x,i_0}( u_1,\beta_2^{*}(u_1))$ for any $u_1 \in \mathcal{U}_1$. Moreover, the equation holds if and only if $u_1 = u_1^{*}$.
\end{enumerate}
\end{lemma}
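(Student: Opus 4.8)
The plan is to prove both assertions by the standard completion-of-squares identity that is already assembled in the proof of Lemma \ref{lem:admissible}. The key computation there shows that, for any admissible state process $X(\cdot)$ driven by a pair $(u_1,u_2)$, applying It\^o's formula to $P(t,\alpha_t)X(t)^2+2\varphi(t,\alpha_t)X(t)$ and adding the running cost produces, after letting the localizing stopping times $\tau_k\to\infty$ and passing to the limit by monotone and dominated convergence (justified exactly as in Lemma \ref{lem:admissible}, using that $\Lambda(\cdot,i),\Delta(\cdot,i)$ are BMO and $u_1,u_2\in\mathcal{U}_1\times\mathcal{U}_2$), the identity
\begin{equation*}
J_{x,i_0}(u_1,u_2) = P(0,i_0)x^2+2\varphi(0,i_0)x + \mathbb{E}\!\int_0^T\!\!\Big(\big[u_2-\beta_2^{*}(u_1)\big]^{\top}\widehat{R}_{22}\big[u_2-\beta_2^{*}(u_1)\big] + (u_1-u_1^{*})^{\top}\widetilde{R}_{11}(u_1-u_1^{*}) + \Theta \Big)\ds,
\end{equation*}
where $\Theta \triangleq P\sigma^{\top}\sigma+2(\varphi b+\sigma^{\top}\Delta)+H_3(P,\varphi,\Delta)$ does not depend on $(u_1,u_2)$, and $\beta_2^{*}(u_1)$ and $u_1^{*}$ are the feedback maps in \eqref{301} evaluated along $X(\cdot)$. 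The first step is therefore to record this identity cleanly; one must double-check that the two quadratic pieces genuinely decouple in the way \eqref{301} is built (this uses the block structure of $\widehat{R}^{-1}$ from Remark \ref{rem:invertible}, namely that $\beta_2^{*}$ is the minimizer of $u_2\mapsto u^{\top}\widehat R u + 2\widehat C^{\top} u X + 2\hat\sigma^{\top}u$ for fixed $u_1$, and that substituting $u_2=\beta_2^{*}(u_1)$ leaves a residual quadratic in $u_1$ with coefficient matrix exactly $\widetilde R_{11}$).

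Granting the identity, part (i) is immediate: fix $u_1\in\mathcal U_1$ and $\beta_2\in\mathcal A_2$, set $u_2=\beta_2(u_1)$, and note $\widehat R_{22}\geq\epsilon I_{m_2}>0$ by Remark \ref{rem:invertible}, so the term $[u_2-\beta_2^{*}(u_1)]^{\top}\widehat R_{22}[u_2-\beta_2^{*}(u_1)]\geq 0$, with equality (being an integral of a nonnegative $dt\otimes d\mathbb P$ integrand) if and only if $u_2=\beta_2^{*}(u_1)$, i.e. $\beta_2(u_1)=\beta_2^{*}(u_1)$; comparing the identity for $(u_1,\beta_2(u_1))$ and for $(u_1,\beta_2^{*}(u_1))$, the $(u_1-u_1^{*})^{\top}\widetilde R_{11}(u_1-u_1^{*})$ and $\Theta$ terms cancel and we get $J_{x,i_0}(u_1,\beta_2^{*}(u_1))\le J_{x,i_0}(u_1,\beta_2(u_1))$ with the stated equality case. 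For part (ii), apply the identity with $u_2=\beta_2^{*}(u_1)$, which kills the $\widehat R_{22}$ term and leaves $J_{x,i_0}(u_1,\beta_2^{*}(u_1))=P(0,i_0)x^2+2\varphi(0,i_0)x+\mathbb E\!\int_0^T(u_1-u_1^{*})^{\top}\widetilde R_{11}(u_1-u_1^{*})\ds$; since $\widetilde R_{11}\le-\epsilon I_{m_1}<0$ by Remark \ref{rem:invertible}, this quantity is $\le P(0,i_0)x^2+2\varphi(0,i_0)x$ with equality iff $u_1=u_1^{*}$, which is exactly $J_{x,i_0}(u_1^{*},\beta_2^{*}(u_1^{*}))\ge J_{x,i_0}(u_1,\beta_2^{*}(u_1))$ with the claimed equality case. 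A small point to address in the equality-case direction is that $u_1^{*}$ here means the feedback control evaluated along the state trajectory generated by $(u_1,\beta_2^{*}(u_1))$; when $u_1=u_1^{*}(\cdot,\alpha_\cdot,X(\cdot))$ the closed-loop state coincides with the one in \eqref{307}, so consistency holds.

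The main obstacle is not conceptual but bookkeeping: one must verify that the integrability needed to pass from the localized identity (with $\tau_k$) to the global one is in force for \emph{arbitrary} admissible $(u_1,u_2)$, not just for the specific closed-loop pairs treated in Lemma \ref{lem:admissible}. Concretely, the stochastic integrals against $\dw$ and against the compensated jump martingales $\dd\tilde N_{ij}$ are only local martingales, and one needs the terminal-time family $\{P(T\wedge\tau_k,\alpha_{T\wedge\tau_k})X(T\wedge\tau_k)^2+2\varphi(T\wedge\tau_k,\alpha_{T\wedge\tau_k})X(T\wedge\tau_k)\}_k$ to be uniformly integrable and the running-cost and $\Theta$ integrals to be absolutely convergent. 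This follows from $P,\varphi\in L^{\infty}_{\mathcal F^W}$, $X(\cdot)\in L^2_{\mathcal F}(C(0,T);\mathbb R)$ (so $\sup_t|X(t)|^2$ is integrable), the boundedness of the $R$-coefficients, $u_1,u_2\in L^2_{\mathcal F}$, and the BMO property of $\Lambda,\Delta$ (which controls the $\sigma^{\top}\Delta$ and $H_3$ terms); I would spell this out once and then invoke monotone and dominated convergence exactly as in the proof of Lemma \ref{lem:admissible}. Everything else is an application of the sign information $\widehat R_{22}>0$, $\widetilde R_{11}<0$ supplied by Remark \ref{rem:invertible} together with Assumption \ref{ass2}.
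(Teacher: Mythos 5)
Your completion-of-squares identity, your integrability discussion, and your treatment of part (ii) all match the paper's proof: along the closed loop $u_2=\beta_2^{*}(u_1)$ the $\widehat R_{22}$-square vanishes identically, only the single nonpositive term $(u_1-u_1^{*})^{\top}\widetilde R_{11}(u_1-u_1^{*})$ survives, and your remark about the consistency of the equality case is the right one.

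The gap is in part (i), at the sentence claiming that ``the $(u_1-u_1^{*})^{\top}\widetilde R_{11}(u_1-u_1^{*})$ and $\Theta$ terms cancel.'' The $\Theta$-terms do cancel (they involve only $P,\varphi,\Delta,b,\sigma$ and the exogenous chain $\alpha$), but $u_1^{*}$ and $\beta_2^{*}(u_1)$ in identity \eqref{3005} are the feedback maps \eqref{301} evaluated along the state trajectory, and the trajectories generated by $(u_1,\beta_2(u_1))$ and by $(u_1,\beta_2^{*}(u_1))$ are different, since the state depends on Player 2's control. Writing $X$ and $X'$ for these two trajectories, you are comparing $\mathbb{E}\int_0^T\{[u_2-\beta_2^{*}(u_1,X)]^{\top}\widehat R_{22}[\cdots]+[u_1-u_1^{*}(X)]^{\top}\widetilde R_{11}[\cdots]\}\ds$ against $\mathbb{E}\int_0^T[u_1-u_1^{*}(X')]^{\top}\widetilde R_{11}[\cdots]\ds$, i.e.\ a nonnegative term plus a nonpositive term against a \emph{different} nonpositive term; the signs $\widehat R_{22}>0$, $\widetilde R_{11}<0$ do not determine which is larger. (The paper's own proof compresses this step into ``From Remark \ref{rem:invertible} we obtain the desired conclusion'' and elides the same point.) The issue is substantive: for a fixed open-loop $u_1$, Player 2's best-response problem is an LQ control problem whose Riccati equation carries the quadratic term $-\widehat C_2^{\top}\widehat R_{22}^{-1}\widehat C_2$ rather than $H_1=-\widehat C^{\top}\widehat R^{-1}\widehat C$, so its solution differs from $P$ in general and $\beta_2^{*}(u_1)$ need not be a best response to an arbitrary $u_1$. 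What does follow from \eqref{3005} without further argument is the case $u_1=u_1^{*}$ of (i): there $u_1-u_1^{*}(X)=0$ identically for every choice of $u_2$, the $\widetilde R_{11}$-term disappears from both sides, and the inequality follows from $\widehat R_{22}\geq\epsilon I_{m_2}$ alone. If you want (i) for arbitrary $u_1\in\mathcal{U}_1$, you need an argument that controls the change of the $\widetilde R_{11}$-term under a change of $u_2$, not the cancellation you assert.
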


\begin{proof}
Similar to Step 1 in the proof of Lemma \ref{lem:admissible}, applying It\^{o}'s formula to $ P(t,\alpha_t)X(t)^2+2\varphi(t,\alpha_t)X(t)$,
where $X(\cdot)$ is the solution of \eqref{201}, we get
\begin{equation*}
\begin{aligned}
&\mathbb{E}\bigg[\int_0^{T\wedge \tau_k} \Big( KX^2+u^{\top}Ru \Big)\ds+
\ P(T\wedge\tau_k,\alpha_{T\wedge\tau_k})X(T\wedge\tau_k)^2+2\varphi(T\wedge\tau_k,\alpha_{T\wedge\tau_k})X(T\wedge\tau_k) \bigg]\\
=\,&P(0,i_0)x^2+2\varphi(0,i_0)x
+\mathbb{E}\bigg[ \int_0^{T\wedge \tau_k} \Big(KX^2+u_1^{\top}R_{11}u_1+2u_1^{\top}R_{12}u_2+u_2^{\top}R_{22}u_2\\
&+P( CX+D_1u_1+D_2u_2+\sigma )^{\top} ( CX+D_1u_1 +D_2u_2+\sigma)\\
&+ 2(PX+\varphi) (AX+B_1^{\top}u_1 +B_2^{\top}u_2+b)
+2(\Lambda X+\Delta)^{\top} ( CX+D_1u_1 +D_2u_2+\sigma )\\
&-2X\big[ P(b+C^{\top}\sigma)+\sigma^{\top}\Lambda+A\varphi +C^{\top}\Delta
+H_2(P,\Lambda,\varphi,\Delta) \big] \\
&-X^2 \big[K+P(2A +C^{\top}C)+2C^{\top}\Lambda+H_1(P,\Lambda)\big] \Big)\ds\bigg].
\end{aligned}
\end{equation*}
Then, completing the square for $u_2$, we have
\begin{equation*}
\begin{aligned}
&\mathbb{E}\bigg[\int_0^{T\wedge \tau_k} \Big( KX^2+u^{\top}Ru \Big)\ds+
\ P(T\wedge\tau_k,\alpha_{T\wedge\tau_k})X(T\wedge\tau_k)^2+2\varphi(T\wedge\tau_k,\alpha_{T\wedge\tau_k})X(T\wedge\tau_k) \bigg]\\
=\,&P(0,i_0)x^2+2\varphi(0,i_0)x
+\mathbb{E}\bigg[ \int_0^{T\wedge \tau_k} \Big( \big[u_2-\beta_2^{*}(u_1) \big]^{\top} \widehat{R}_{22} \big[u_2-\beta_2^{*}(u_1) \big]
-\beta_2^{*}(u_1)^{\top} \widehat{R}_{22} \beta_2^{*}(u_1)\\
&+u_1^{\top}R_{11}u_1+P( CX+D_1u_1+\sigma)^{\top} (CX+D_1u_1+\sigma)
+ 2(PX+\varphi) (B_1^{\top}u_1 +b) \\
&+2(\Lambda X+\Delta)^{\top} (D_1u_1+\sigma)-X^2 \big[P C^{\top}C+H_1(P,\Lambda)\big]\\
&-2X\big[ P(b+C^{\top}\sigma)+\sigma^{\top}\Lambda +H_2(P,\Lambda,\varphi,\Delta) \big] \Big)\ds\bigg].
\end{aligned}
\end{equation*}
Once again, completing the square for $u_1$, we obtain
\begin{equation*}
\begin{aligned}
&\mathbb{E}\bigg[\int_0^{T\wedge \tau_k} \Big( KX^2+u^{\top}Ru \Big)\ds+
\ P(T\wedge\tau_k,\alpha_{T\wedge\tau_k})X(T\wedge\tau_k)^2+2\varphi(T\wedge\tau_k,\alpha_{T\wedge\tau_k})X(T\wedge\tau_k) \bigg]\\
=\,&P(0,i_0)x^2+2\varphi(0,i_0)x
+\mathbb{E}\bigg[ \int_0^{T\wedge \tau_k} \Big(\big[u_2-\beta_2^{*}(u_1) \big]^{\top} \widehat{R}_{22} \big[u_2-\beta_2^{*}(u_1) \big]\\
&+(u_1-u_1^{*})^{\top} \widetilde{R}_{11} (u_1-u_1^{*})
+P\sigma^{\top}\sigma+2( \varphi b+\sigma^{\top}\Delta )
+H_3(P,\varphi,\Delta) \Big)\ds\bigg].
\end{aligned}
\end{equation*}
Letting $k\rightarrow \infty$, by the dominated convergence theorems, we obtain
\begin{align}\label{3005}
J_{x,i_0}( u_1,u_2)
=\,&P(0,i_0)x^2+2\varphi(0,i_0)x
+\mathbb{E}\bigg[ \int_0^{T} \Big(\big[u_2-\beta_2^{*}(u_1) \big]^{\top} \widehat{R}_{22} \big[u_2-\beta_2^{*}(u_1) \big]\\
&+(u_1-u_1^{*})^{\top} \widetilde{R}_{11} (u_1-u_1^{*}) +P\sigma^{\top}\sigma+2( \varphi b+\sigma^{\top}\Delta )
+H_3(P,\varphi,\Delta) \Big)\ds\bigg].\nn
\end{align}
From Remark \ref{rem:invertible}, we obtain the desired conclusion and finish the proof.
\end{proof}

Next, we give the solution for the LQ game \eqref{201}-\eqref{202}. The proofs of Theorem \ref{theorem:1's verification theorem} and Corollary \ref{corollary: value function} are similar to that of Theorem 2.5 and Corollary 2.7 in \cite{Yu}, and the latter is not repeated here.

\begin{theorem}[Solution for the LQ game \eqref{201}-\eqref{202}] \label{theorem:1's verification theorem}
For any $(x,i_0)\in \mathbb{R} \times \mathcal{M}$, the LQ game \eqref{201}-\eqref{202} admits an optimal control-strategy pair $(u_1^{*}, \beta_2^{*})$ (resp., $(u_2^{*}, \beta_1^{*})$ ) for Player 1's
(resp., Player 2's) value, which is in a
feedback form defined by \eqref{301} (resp., \eqref{302}). Moreover,
the game has a value, given by
\begin{align*}
V(x,i_0)=\;& P(0,i_0)x^2+2\varphi(0,i_0)x
+\int_0^T \mathbb{E}\Big[ P(t,\alpha_t)\sigma(t,\alpha_t)^{\top}\sigma(t,\alpha_t) +2 \varphi(t,\alpha_t)b(t,\alpha_t)\\
&+2\sigma(t,\alpha_t)^{\top}\Delta(t,\alpha_t)+H_3(t, \alpha_t, P(t,\alpha_t),\varphi(t,\alpha_t),\Delta(t,\alpha_t)) \Big]\dt,
\end{align*}
where $\big(P(\cdot,i),\Lambda(\cdot,i)\big)_{i\in \mathcal{M}}$ and $\big(\varphi(\cdot,i),\Delta(\cdot,i)\big)_{i\in \mathcal{M}}$ are solutions of \eqref{304}-\eqref{305}.
\end{theorem}
\begin{proof}
We only prove the theorem for Player 1, and that of Player 2 is similar.
Letting $(u_1^{*}, \beta_2^{*})$ is defined by \eqref{301}. By Lemma \ref{lem:optimality} (ii), we have
\begin{equation}
\label{306}
J_{x,i_0}( u_1^{*},\beta_2^{*}(u_1^{*})) =\sup\limits_{ u_1 \in \mathcal{U}_1 }J_{x,i_0}( u_1,\beta_2^{*}(u_1)).
\end{equation}
By Lemma \ref{lem:optimality} (i), for any $u_1 \in \mathcal{U}_1$, we have
$$J_{x,i_0}( u_1,\beta_2^{*}(u_1)) =\inf\limits_{ \beta_2 \in \mathcal{A}_2 }J_{x,i_0}( u_1,\beta_2(u_1)). $$
Thus
$$J_{x,i_0}( u_1^{*},\beta_2^{*}(u_1^{*})) =\sup\limits_{ u_1 \in \mathcal{U}_1 }\inf\limits_{ \beta_2 \in \mathcal{A}_2 }
J_{x,i_0}( u_1,\beta_2(u_1)). $$
Obviously,
$$J_{x,i_0}( u_1^{*},\beta_2^{*}(u_1^{*}))=\sup\limits_{ u_1 \in \mathcal{U}_1 }\inf\limits_{ \beta_2 \in \mathcal{A}_2 }J_{x,i_0}( u_1,\beta_2(u_1))
\leq \inf\limits_{ \beta_2 \in \mathcal{A}_2 } \sup\limits_{ u_1 \in \mathcal{U}_1 } J_{x,i_0}( u_1,\beta_2(u_1)). $$
On the other hand, from \eqref{306}, we have
$$J_{x,i_0}( u_1^{*},\beta_2^{*}(u_1^{*}))
\geq \inf\limits_{ \beta_2 \in \mathcal{A}_2 } \sup\limits_{ u_1 \in \mathcal{U}_1 } J_{x,i_0}( u_1,\beta_2(u_1)). $$
Therefore, we obtain
$$J_{x,i_0}( u_1^{*},\beta_2^{*}(u_1^{*}))
=\inf\limits_{ \beta_2 \in \mathcal{A}_2 } \sup\limits_{ u_1 \in \mathcal{U}_1 } J_{x,i_0}( u_1,\beta_2(u_1)). $$
By the definition of Player 1's value, $(u_1^{*}, \beta_2^{*})$ is an optimal control-strategy pair of it.
At last,  
$$ V_1(x,i_0)=P(0,i_0)x^2+2\varphi(0,i_0)x
+ \int_0^T \mathbb{E}\Big[ P\sigma^{\top}\sigma+2(\varphi b+\sigma^{\top}\Delta )
+H_3(P,\varphi,\Delta)\Big]\dt $$
comes from the result of completing the square (please refer to \eqref{3005}).
\end{proof}

\begin{theorem} \label{theorem:Uniqueness of BSDE}
The indefinite SRE \eqref{304} admits a unique solution.
\end{theorem}
\begin{proof}
Let $(\bm{P}(\cdot),\bm{\Lambda}(\cdot))$ and $(\widetilde{\bm{P}}(\cdot),\widetilde{\bm{\Lambda}}(\cdot))$ be solutions of \eqref{304}.
We introduce a family of LQ games parameterized by $(s,x,i_0)\in [0,T)\times \mathbb{R}\times \mathcal{M}$, in which the original LQ game \eqref{201}-\eqref{202} is embedded.

Consider the following control system over $[s,T]$:
\begin{equation*}
\left\{
\begin{aligned}
\dd X(t)=\,&\big[ AX(t)+B^{\top}u(t)+b \big]\dt
+ \big[ CX(t)+Du(t) +\sigma \big]^{\top}\dw(t),\\
X(s)=\,&x\in\mathbb{R},\quad \alpha_s=i_0\in \mathcal{M}.
\end{aligned}
\right.
\end{equation*}
The objective functional is the following $\mathcal{F}_s$-measurable random variable:
\begin{equation*}
J_{s,x,i_0}\big( u_1,u_2)
=\mathbb{E}\bigg[ \int_s^T \Big( KX(t)^2+u(t)^{\top}Ru(t) \Big)\dt + G(\alpha_T)X(T)^2\;\bigg|\; X(s)=x, \alpha_s=i_0 \bigg].
\end{equation*}
For $k\in \{1,2\}$, the admissible control set $\mathcal{U}^s_k= L^2_{\mathcal{F}}(s,T;\mathbb{R}^{m_k})$ and the admissible strategy set $\mathcal{A}^s_k$ are defined similar to Definition \ref{def:admissible strategy} with the initial time being $s$.

For $(s,x,i_{0})\in [0,T)\times \mathbb{R}\times \mathcal{M}$,
Player 1's value and Player 2's value are the following $\mathcal{F}_s$-measurable random variables:
\begin{equation*}
\begin{aligned}
V_1(s,x,i_0)&\triangleq \operatorname*{ess\,inf} \limits_{\beta_2 \in \mathcal{A}_2^s}\, \operatorname*{ess\,sup} \limits_{u_1\in \mathcal{U}_1^s} J_{s,x,i_0}\big( u_1,\beta_2( u_1)),\\
V_2(s,x,i_0) &\triangleq \operatorname*{ess\,sup} \limits_{\beta_1\in \mathcal{A}_1^s}\, \operatorname*{ess\,inf} \limits_{u_2 \in \mathcal{U}_2^s} J_{s,x,i_0}\big( \beta_1(u_2), u_2 ).
\end{aligned}
\end{equation*}
Similar to Theorem \ref{theorem:1's verification theorem}, we have
$V_1(s,x,i_0)=V_2(s,x,i_0)$ and both equal to
\begin{align*}
P(s,i_0)x^2+2\varphi(s,i_0)x
+ \mathbb{E}\Big[ \int_s^T \big( &P\sigma^{\top}\sigma
+2\big[ \varphi b +\sigma^{\top}\Delta \big]\\
&\quad+H_3( P,\varphi,\Delta)\big) \dt \;\Big|\; X(s)=x, \alpha_s=i_0 \Big],
\end{align*}
where $\big(P(\cdot,i),\Lambda(\cdot,i)\big)_{i\in \mathcal{M}}$ and
$\big(\varphi(\cdot,i),\Delta(\cdot,i)\big)_{i\in \mathcal{M}}$ are solutions of \eqref{304}-\eqref{305}.
Because Player 1's value is unique, we get $P(s,i)=\widetilde{P}(s,i)$ for all $(s,i)\in[0,T]\times \mathcal{M}$.

On other hand,
define $\overline{P}(\cdot,i)\triangleq P(\cdot,i)-\widetilde{P}(\cdot,i)$, $\overline{\Lambda}(\cdot,i)\triangleq \Lambda(\cdot,i)-\widetilde{\Lambda}(\cdot,i) $ for all $i\in \mathcal{M}$.
Applying It\^{o}'s formula to $\overline{P}(\cdot,i)^2$, 
and using $\overline{P}(t,i)=0$ for all $(t,i)\in[0,T]\times \mathcal{M}$, we obtain
$\mathbb{E}\Big[ \int_{0}^T \big|\overline{\Lambda}(t,i)\big|^2 \dt \Big]=0$, implying
$\Lambda(t,i)=\widetilde{\Lambda}(t,i) $ for all $(t,i)\in[0,T]\times \mathcal{M}$.
The proof is complete.
\end{proof}

\begin{corollary} \label{corollary: value function}
For any $(x,i_0)\in \mathbb{R} \times \mathcal{M}$, we have
$u_1^{*}=\beta_1^{*}(u_2^{*})$, $u_2^{*}=\beta_2^{*}(u_1^{*})$,
where $u_1^{*}$, $u_2^{*}$, $\beta_1^{*}$, $\beta_2^{*}$ are defined by \eqref{301}-\eqref{302}.
Moreover, the value of the LQ game \eqref{201}-\eqref{202} satisfies $V(x,i_0)= J_{x,i_0}\big( u_1^{*}, u_2^{*})$.
\end{corollary}

\section{Constrained zero-sum SLQD game}\label{Game}

In the previous section, we solved the LQ game \eqref{201}-\eqref{202} for inhomogeneous systems without control constraints.
In this section, we study the game for homogeneous systems, giving an advantage that we can introduce closed convex cone control constraints.

Mathematically, we assume $b(t,i)=0$ and $\sigma(t,i)=\bm{0}$ for all $(t,i)\in [0,T]\times\mathcal{M}$ throughout this section. Then \eqref{201} becomes a homogeneous system:
\begin{equation}\label{201b}
\left\{
\begin{aligned}
\dd X(t)=\,&\big[ A(t,\alpha_t)X(t)+B(t,\alpha_t)^{\top}u(t) \big]\dt + \big[ C(t,\alpha_t)X(t)+D(t,\alpha_t)u(t) \big]^{\top}\dw(t),\\
X(0)=\,&x\in\mathbb{R},\quad \alpha_0=i_0\in \mathcal{M}.
\end{aligned}
\right.
\end{equation} 
\begin{definition}\label{closed convex cone}
A set $\Gamma$ is a closed convex cone if:
\begin{enumerate}[(a)]
\item $\Gamma$ is closed;
\item for all $\lambda\geq 0$, if $u\in \Gamma$, then $\lambda u \in \Gamma$;
\item for all $\theta\in [0,1]$, if $u,v\in \Gamma$, then $ \theta u+(1-\theta)v \in \Gamma$.
\end{enumerate}
\end{definition} 
Let $\Gamma_1\in\mathbb{R}^{m_1}$, $\Gamma_2\in\mathbb{R}^{m_2}$ be two closed convex cones.
For the LQ game \eqref{201b}-\eqref{202} with control constraint set $(\Gamma_1, \Gamma_2)$, the admissible control sets are defined as
$$\widetilde{\mathcal{U}}_k= \big\{u_k(\cdot)\in L^2_{\mathcal{F}}(0,T;\mathbb{R}^{m_k})\;\big|\;u_k(\cdot)\in\Gamma_k \big\},
\, k\in\{1,2\}.$$
Definitions of admissible strategy are similar to Definition \ref{def:admissible strategy} with $\mathcal{U}_k$ being replaced by $\widetilde{\mathcal{U}}_k$ and we denote the set of all admissible strategies for Player $k$ by
$\widetilde{\mathcal{A}}_k$, $k \in\{ 1,2\}$.
For the constrained LQ game \eqref{201b}-\eqref{202}, Player 1's value $\widetilde{V}_1(x,i_0)$ and Player 2's value $\widetilde{V}_2(x,i_0)$, the optimal control-strategy pairs, and the value of the game are defined similarly to that of the LQ game \eqref{201}-\eqref{202}. 

As before, we first introduce some notations.
For $(t,i,P,\Lambda,v_1,v_2)\in [0,T]\times \mathcal{M}\times \interval\times \mathbb{R}^n\times \Gamma_1\times\Gamma_2$, $k\in\{1,2\}$, we define
\begin{equation*}
\begin{aligned}
f_{1k}(t,i,P,\Lambda,v_2)&= \max\limits_{v_1\in \Gamma_1}
\big\{ v_1^{\top}\widehat{R}_{11}v_1 +2v_1^{\top}\widehat{R}_{12}v_2 -2(-1)^{k} \widehat{C}_{1}^{\top}v_1\big\},\\
f_{2k}(t,i,P,\Lambda,v_1)&= \min \limits_{ v_2\in \Gamma_2}
\big\{ v_2^{\top}\widehat{R}_{22}v_2+2v_1^{\top}\widehat{R}_{12}v_2 -2(-1)^{k} \widehat{C}_{2}^{\top}v_2 \big\},\\
\widetilde{H}_{1k}(t,i,P,\Lambda) &= \max \limits_{v_1\in \Gamma_1} \big\{ v_1^{\top}\widehat{R}_{11}v_1 -2(-1)^{k} \widehat{C}_{1}^{\top}v_1
+f_{2k}(t,i,P,\Lambda,v_1)\big\},\\
\widetilde{H}_{2k}(t,i,P,\Lambda) &= \min \limits_{ v_2\in \Gamma_2} \big\{ v_2^{\top}\widehat{R}_{22}v_2 -2(-1)^{k} \widehat{C}_{2}^{\top}v_2
+f_{1k}(t,i,P,\Lambda,v_2)\big\}.
\end{aligned}
\end{equation*}
Henceforth, we drop some arguments for $f_{kk'}$, $\widetilde{H}_{kk'}$, $k,k'\in \{1,2\}$.

\begin{remark}\label{rem:tildeH growth}
For all $(i,P,\Lambda,v_1,v_2) \in\mathcal{M}\times \interval\times \mathbb{R}^n\times\Gamma_1\times\Gamma_2$, $k,k'\in \{1,2\}$, since $\bm{0}\in \Gamma_1$, $\Gamma_2$, we have
$ f_{1k}\geq 0$, $f_{2k}\leq 0$. So we get
\begin{align*}
-\frac{2(c_3\bar{\epsilon}^2+\bar{c}_2|\Lambda|^2)}{\epsilon}
&\leq\min \limits_{ v_2\in \Gamma_2} \{ \epsilon|v_2|^2 -2 |\widehat{C}_{2}| |v_2| \}\\
&\leq \widetilde{H}_{kk'}\leq \max \limits_{v_1\in \Gamma_1} \{ -\epsilon|v_1|^2 +2 |\widehat{C}_{1}| |v_1|\}
\leq\frac{2(c_3\bar{\epsilon}^2+\bar{c}_2|\Lambda|^2)}{\epsilon}.
\end{align*}
Thus, for all $(i,P,\Lambda)\in \mathcal{M} \times\interval\times \mathbb{R}^n$, we have
$$|\widetilde{H}_{kk'}| \leq\frac{2(c_3\bar{\epsilon}^2+\bar{c}_2|\Lambda|^2)}{\epsilon}, \, k,k'\in \{1,2\}.$$
\end{remark}

\begin{lemma}[Minimax theorem] \label{Minimax theorem}
For $(t,i,P,\Lambda,v_1,v_2)\in [0,T]\times\mathcal{M}\times \interval\times \mathbb{R}^n\times\Gamma_1\times\Gamma_2$, $k\in \{1,2\}$, we have
\begin{align*}
\widetilde{H}_{1k}=\widetilde{H}_{2k}=\widetilde{H}_{k}(t,i,P,\Lambda)\triangleq\,&\max \limits_{v_1\in \Gamma_1\atop |v_1|\leq c(1+|\Lambda|)} \min \limits_{ v_2\in \Gamma_2 \atop |v_2|\leq c(1+|\Lambda|)}
\mathcal{H}_{k}(t,i,P,\Lambda,v_1,v_2)\\
=\,&\min \limits_{ v_2\in \Gamma_2 \atop |v_2|\leq c(1+|\Lambda|)} \max \limits_{v_1\in \Gamma_1 \atop |v_1|\leq c(1+|\Lambda|)}
\mathcal{H}_{k}(t,i,P,\Lambda,v_1,v_2),
\end{align*}
where $c$ is any sufficiently large constant, and
$$\mathcal{H}_{k}(t,i,P,\Lambda,v_1,v_2)\triangleq v_1^{\top}\widehat{R}_{11}v_1
-2(-1)^{k} \widehat{C}_{1}^{\top}v_1+v_2^{\top}\widehat{R}_{22}v_2+2v_1^{\top}\widehat{R}_{12}v_2 -2(-1)^{k} \widehat{C}_{2}^{\top}v_2.$$
\end{lemma}

\begin{proof}
We only prove
\begin{equation*} 
\widetilde{H}_{11}(t,i,P,\Lambda)
=\max \limits_{v_1\in \Gamma_1\atop |v_1|\leq c(1+|\Lambda|)} \min \limits_{ v_2\in \Gamma_2 \atop |v_2|\leq c(1+|\Lambda|)}
\mathcal{H}_{1}(t,i,P,\Lambda,v_1,v_2).
\end{equation*}
The proofs for other identities are similar.
The switching between $\max$ and $\min$ is trivially due to the compactness and continuity (see Sion's Minimax theorem \cite{Sion}).

On one hand, for all $(i,P,\Lambda,v_1,v_2)\in \mathcal{M}\times \interval\times \mathbb{R}^n \times\Gamma_1\times \Gamma_2$, we have
$$ v_2^{\top}\widehat{R}_{22}v_2+2v_1^{\top}\widehat{R}_{12}v_2 +2 \widehat{C}_{2}^{\top}v_2
\geq \epsilon|v_2|^2-c\epsilon (1+|v_1|+|\Lambda|)|v_2|$$
for any sufficiently large constant $c>0$.
Hence if $|v_2|> c(1+|v_1|+|\Lambda|)$,
then $$\epsilon|v_2|^2-c\epsilon(1+|v_1|+|\Lambda|)|v_2|>0\geq f_{21},$$ which implies that
\begin{equation*}
\begin{aligned}
f_{21}(t,i,P,\Lambda,v_1)&= \min \limits_{ v_2\in \Gamma_2\atop |v_2|\leq c(1+|v_1|+|\Lambda|)}
\big\{ v_2^{\top}\widehat{R}_{22}v_2+2v_1^{\top}\widehat{R}_{12}v_2 +2 \widehat{C}_{2}^{\top}v_2 \big\}.
\end{aligned}
\end{equation*}
On the other hand, since $f_{21}(v_1)\leq 0$, we have
$$v_1^{\top}\widehat{R}_{11}v_1 +2 \widehat{C}_{1}^{\top}v_1+f_{21}(v_1)\leq -\epsilon|v_1|^2+c(1+|\Lambda|)|v_1|.$$
Hence if $$|v_1|>\frac{c(1+|\Lambda|)+\sqrt{c^2(1+|\Lambda|)^2+8(c_3\bar{\epsilon}^2+\bar{c}_2|\Lambda|^2)}}{2\epsilon},$$
then $$-\epsilon|v_1|^2+c(1+|\Lambda|)|v_1|< -\frac{2(c_3\bar{\epsilon}^2+\bar{c}_2|\Lambda|^2)}{\epsilon} \leq \widetilde{H}_{11},$$ which leads to the desired expression for $\widetilde{H}_{11}$.
\end{proof}

Because of the cone constraint, the indefinite SRE for the LQ game \eqref{201b}-\eqref{202}
is not a single BSDE, but consists of a pair of decoupled BSDEs, which are given by
\begin{equation}
\label{404}
\left\{
\begin{aligned}
\dd P_{k}(t,i)=\,&-\Big[K(t,i)+P_{k}(t,i)\big[2A(t,i)+C(t,i)^{\top}C(t,i)\big]+2C(t,i)^{\top}\Lambda_{k}(t,i)\\
&+\widetilde{H}_{k}(t,i,P_{k}(t,i),\Lambda_{k}(t,i))+\sum_{j\in \mathcal{M}} q_{ij}P_{k}(t,j)\Big]\dt +\Lambda_{k}(t,i)^{\top}\dw(t),\\
P_{k}(T,i)=\,&G(i),\, P_{k}(\cdot,i)\in \interval, \, \text{ for all } \, i\in \mathcal{M},\, k\in\{1,2\}.
\end{aligned}
\right.
\end{equation}

The solutions of \eqref{404} are defined similarly to Definition \ref{def: solution of SRE}.
Similar to Theorem \ref{theorem:solvability of ESRE} and Theorem \ref{theorem:Uniqueness of BSDE}, we can get the solvability of \eqref{404}.

\begin{theorem}[Solvability of \eqref{404}]\label{Constrained BSDE}
The indefinite SREs \eqref{404} have unique solutions $\big(P_{k}(\cdot,i),\Lambda_{k}(\cdot,i)\big)_{i\in \mathcal{M}}$, and $(P_{k}(\cdot,i),\Lambda_{k}(\cdot,i))\in L^{\infty}_{\mathcal{F}^W}(0,T;\mathbb{R})\times L^{2,\mathrm{BMO}}_{\mathcal{F}^W}(0,T;\mathbb{R}^n)$ for all $i\in \mathcal{M}$, $k\in\{1,2\}$.
\end{theorem}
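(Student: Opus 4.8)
The plan is to reproduce, almost verbatim, the three-part treatment of the unconstrained equation \eqref{304}: existence by monotone approximation, the BMO estimate on the martingale integrand, and uniqueness via the verification theorem for the game. I will describe the argument for \eqref{404}; equation \eqref{405} is handled identically with $\widetilde H_1$ replaced by $\widetilde H_2$ (equivalently, with the signs of $\widehat C_1$ and $\widehat C_2$ flipped throughout).

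\textbf{Existence.} As in the proof of Theorem \ref{theorem:solvability of ESRE}, I would set $g(t,\omega,i,\bm P,\Lambda_i)=K+P_i[2A+C^\top C]+2C^\top\Lambda_i+\sum_{j\in\mathcal M}q_{ij}P_j$, define $\widehat{\widetilde H}_1$ to equal $\widetilde H_1$ on $\interval$ and $0$ for $|P|>\bar\epsilon$, and regularize from above by the same sup-convolution $\widetilde H_1^k(t,\omega,i,P,\Lambda)=\sup_{(\widetilde P,\widetilde\Lambda)}\{\widehat{\widetilde H}_1(t,\omega,i,\widetilde P,\widetilde\Lambda)-k|P-\widetilde P|-k|\Lambda-\widetilde\Lambda|\}$, which is uniformly Lipschitz in $(P,\Lambda)$ and decreases to $\widehat{\widetilde H}_1$ as $k\to\infty$. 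The Lipschitz multidimensional BSDEs with driver $g+\widetilde H_1^k$ then have unique solutions $(\bm P_1^k,\bm\Lambda_1^k)$. The one point that must be verified rather than merely copied is that, by Remark \ref{rem:tildeH growth}, each $\widetilde H_1^k$ obeys the \emph{same} bound $|\widetilde H_1^k|\le 2(c_3\bar\epsilon^2+\bar c_2|\Lambda|^2)/\epsilon$ satisfied by $H_1^k$ in Theorem \ref{theorem:solvability of ESRE}; consequently the identical explicit barriers $\underline P(\cdot,i)$ and $\overline P(\cdot,i)$ still apply, and the multidimensional comparison theorem of \cite{Hu Peng} confines $P_1^k(\cdot,i)$ to $\interval$ uniformly in $k$. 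Monotone convergence gives $P_1(\cdot,i)=\lim_k P_1^k(\cdot,i)\in\interval$, and viewing each $(P_1^k(\cdot,i),\Lambda_1^k(\cdot,i))$ as a scalar quadratic BSDE, \cite[Lemma 9.6.6]{Zhang book} produces the limiting integrand $\bm\Lambda_1$ and shows $(\bm P_1,\bm\Lambda_1)$ solves \eqref{404}.

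\textbf{BMO estimate.} This step copies Lemma \ref{BMO} word for word: applying It\^o's formula to $P_1(\cdot,i)^2$ on $[\tau,T]$ for an $\mathcal F^W_t$-stopping time $\tau$, using $|G|\le\overline G$, the boundedness of $P_1(\cdot,i)$, the bound $|\widetilde H_1|\le 2(c_3\bar\epsilon^2+\bar c_2|\Lambda_1(\cdot,i)|^2)/\epsilon$ from Remark \ref{rem:tildeH growth}, Young's inequality on the $2C^\top\Lambda_1$ term, and Assumption \ref{ass3} ($2\bar c_2<\epsilon$) to absorb the quadratic term, one obtains a uniform bound on $\mathbb E[\int_\tau^T|\Lambda_1(t,i)|^2\dt\mid\mathcal F^W_\tau]$, that is $\Lambda_1(\cdot,i)\in L^{2,\mathrm{BMO}}_{\mathcal F^W}(0,T;\mathbb R^n)$.

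\textbf{Uniqueness and the main obstacle.} As with \eqref{304}, I would not attempt a direct BSDE proof of uniqueness but instead deduce it from the verification theorem for the constrained LQ game \eqref{201b}-\eqref{202} established below: completing the square against the cone constraints $\Gamma_1,\Gamma_2$ — which is exactly what the functions $f_{kk'}$, the operators $\widetilde H_{kk'}$, and the Minimax theorem (Lemma \ref{Minimax theorem}) are set up to permit, collapsing the Hamiltonian to $\widetilde H_1$ for Player 1's value and to $\widetilde H_2$ for Player 2's value — expresses $\widetilde V_1(x,i_0)$ in terms of $P_1(0,i_0)$ and $\widetilde V_2(x,i_0)$ in terms of $P_2(0,i_0)$. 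Since the game values do not depend on which solution is used, any two solutions of \eqref{404} (resp. \eqref{405}) must agree at $t=0$; running the argument from an arbitrary initial time forces agreement on all of $[0,T]$, after which the martingale representation theorem pins down $\bm\Lambda_k$. I expect the genuine difficulty to lie entirely in the existence step just described, namely in checking that the monotone Lipschitz regularization of the \emph{min--max} generator $\widetilde H_1$ preserves, uniformly in $k$, the growth bound of Remark \ref{rem:tildeH growth} that keeps the approximating solutions inside $\interval$ — and hence keeps $\widehat R$ invertible and indefinite; here the Minimax theorem is precisely what makes $\widetilde H_1$ and $\widetilde H_2$ continuous, suitably-growing functions of $(P,\Lambda)$ for which such a regularization behaves well.
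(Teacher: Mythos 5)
Your proposal takes essentially the same approach as the paper: the paper's own proof of this theorem is the single remark that it is ``similar to Theorem \ref{theorem:solvability of ESRE}'', and your elaboration --- the Lipschitz sup-convolution with the comparison-theorem barriers justified by the growth bound of Remark \ref{rem:tildeH growth}, the It\^{o}-on-$P_k(\cdot,i)^2$ BMO estimate, and uniqueness deduced from the constrained verification theorem rather than a direct BSDE argument --- is exactly the intended argument. The only minor imprecision is that Theorem \ref{Constrained Verification Theorem} gives \emph{both} game values as $P_1(0,i_0)(x^+)^2+P_2(0,i_0)(x^-)^2$, so one isolates $P_1(0,i_0)$ and $P_2(0,i_0)$ by taking $x>0$ and $x<0$ respectively (and then varying the initial time), but this does not affect the conclusion.
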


For $(t,i,P,\Lambda,v_1,v_2)\in [0,T]\times \mathcal{M}\times \interval\times \mathbb{R}^n\times \Gamma_1\times\Gamma_2$, $k\in\{1,2\}$, we define
\begin{equation*}
\begin{aligned}
\hat{v}_{1k} (t,i,P,\Lambda)&\triangleq\mathop{\arg \max}\limits_{v_1\in \Gamma_1}
\big\{ v_1^{\top}\widehat{R}_{11}v_1 -2(-1)^{k} \widehat{C}_{1}^{\top}v_1+f_{2k}(v_1)\big\},\\
\hat{v}_{2k} (t,i,P,\Lambda)&\triangleq\mathop{\arg \min}\limits_{ v_2\in \Gamma_2}
\big\{ v_2^{\top}\widehat{R}_{22}v_2-2(-1)^{k} \widehat{C}_{2}^{\top}v_2+f_{1k}(v_2)\big\},\\
\hat{\beta}_{1k}(t,i,P,\Lambda,v_2)&\triangleq\mathop{\arg \max}\limits_{v_1\in \Gamma_1}
\big\{ v_1^{\top}\widehat{R}_{11}v_1 +2v_1^{\top}\widehat{R}_{12}v_2 -2(-1)^{k} \widehat{C}_{1}^{\top}v_1\big\},\\
\hat{\beta}_{2k}(t,i,P,\Lambda,v_1)&\triangleq\mathop{\arg \min} \limits_{ v_2\in \Gamma_2 }
\big\{ v_2^{\top}\widehat{R}_{22}v_2+2v_1^{\top}\widehat{R}_{12}v_2 -2(-1)^{k} \widehat{C}_{2}^{\top}v_2 \big\}.
\end{aligned}
\end{equation*}
Then $|\hat{v}_{kk'}|\leq c(1+|\Lambda|)$, $|\hat{\beta}_{1k}|\leq c(1+|v_{2}|+|\Lambda|)$, $|\hat{\beta}_{2k}|\leq c(1+|v_{1}|+|\Lambda|)$,
where $k,k'\in\{1,2\}$ and some arguments for $\hat{v}_{kk'}$, $\hat{\beta}_{kk'}$ are dropped.
For $k\in\{1,2\}$ and $u_k(\cdot)\in \widetilde{\mathcal{U}}_k$, we define $\mathcal{F}_t$-adapted processes $v_k(t)$ as $\frac{u_k(t)}{|X(t)|}$ when $|X(t)|\neq 0$, and 0 otherwise, 
where $X(\cdot)$ is the corresponding admissible state process.
Notice for $k\in\{1,2\}$, $\Gamma_k$ is a cone, so the process $v_k(\cdot)$ is valued in $\Gamma_k$.
Moreover, we set
\begin{equation}\label{406}
\left\{
\begin{aligned}
u_1^{*}(t,i,X(t))=\,&\hat{v}_{11} (P_{1}(t,i),\Lambda_{1}(t,i))X(t)^{+} +\hat{v}_{12} (P_{2}(t,i),\Lambda_{2}(t,i))X(t)^{-},\\
\beta_2^{*}(t,i,u_1(t),X(t))=\,&\hat{\beta}_{21} (P_{1}(t,i),\Lambda_{1}(t,i),v_1(t))X(t)^{+} +\hat{\beta}_{22} (P_{2}(t,i),\Lambda_{2}(t,i),v_1(t))X(t)^{-},
\end{aligned}
\right.
\end{equation}
and
\begin{equation}\label{407}
\left\{
\begin{aligned}
u_2^{*}(t,i,X(t))=\,&\hat{v}_{21} (P_{1}(t,i),\Lambda_{1}(t,i))X(t)^{+} +\hat{v}_{22} (P_{2}(t,i),\Lambda_{2}(t,i))X(t)^{-},\\
\beta_1^{*}(t,i,u_2(t),X(t))=\,&\hat{\beta}_{11} (P_{1}(t,i),\Lambda_{1}(t,i),v_2(t))X(t)^{+} +\hat{\beta}_{12} (P_{2}(t,i),\Lambda_{2}(t,i),v_2(t))X(t)^{-},
\end{aligned}
\right.
\end{equation}
where for $k\in\{1,2\}$, $\big(P_{k}(\cdot,i),\Lambda_{k}(\cdot,i)\big)_{i\in \mathcal{M}}$ are solutions of \eqref{404}.

\begin{lemma} \label{constained admissible}
The feedback control-strategy pair of Player $1$ (resp., Player $2$) defined by \eqref{406} (resp., \eqref{407})
is admissible.
\end{lemma}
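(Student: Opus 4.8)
The plan is to follow the two-step scheme of the proof of Lemma~\ref{lem:admissible}, adapting it to three new features of the constrained homogeneous setting: the feedback maps in \eqref{406}--\eqref{407} are defined through measurable selections of $\arg\max/\arg\min$ problems over the cones $\Gamma_1,\Gamma_2$; the state enters through its positive and negative parts $X^{+},X^{-}$; and the pair of indefinite SREs \eqref{404}--\eqref{405} replaces the single equation \eqref{304}. We argue only for $(u_1^{*},\beta_2^{*})$ of \eqref{406}, the pair $(\beta_1^{*},u_2^{*})$ of \eqref{407} being entirely symmetric. In view of Definitions~\ref{1'control} and \ref{2'strategy} we must check: (a) the required $\mathcal{F}^W_t$-adaptedness; (b) that for each admissible control of the opponent the closed-loop SDE \eqref{203} with $\pi=u_1^{*}$, resp.\ \eqref{204} with $\Pi=\beta_2^{*}$, has a unique strong solution; and (c) that the induced processes belong to $\widetilde{\mathcal U}_1$, resp.\ define an element of $\widetilde{\mathcal A}_2$.

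For (a), the maps $\widehat R_{kk'},\widehat C_k$ are Carath\'{e}odory in $(t,\omega,i,P,\Lambda)$ and continuous in $(P,\Lambda)$, and each $\arg\max/\arg\min$ in \eqref{406}--\eqref{407} is taken over a set of the form $\Gamma_j\cap\{|v|\leq c(1+\cdots)\}$, which is compact; a standard measurable-selection theorem (Kuratowski--Ryll-Nardzewski) then produces jointly measurable, $\mathcal{F}^W_t$-adapted selections $\hat v_{1k},\hat\beta_{2k}$, using that $(\bm P_k,\bm\Lambda_k)$ are $\mathcal{F}^W_t$-adapted by Theorem~\ref{Constrained BSDE}. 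Since $\Gamma_1$ is a cone, $X^{\pm}\geq 0$ and $v_1$ is $\Gamma_1$-valued, $u_1^{*}(\cdot)$ is $\Gamma_1$-valued; likewise $\beta_2^{*}(\cdot)$ is $\Gamma_2$-valued. This settles (a) and the constraint half of (c).

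For (b): in \eqref{203} the feedback $u_1^{*}=\hat v_{11}(P_1,\Lambda_1)X^{+}+\hat v_{12}(P_2,\Lambda_2)X^{-}$ is \emph{piecewise linear in $X$} with $X$-independent coefficients bounded by $c(1+|\Lambda_1(t,i)|+|\Lambda_2(t,i)|)$, so \eqref{203} is a linear-growth SDE in $X$ with a random Lipschitz modulus of that size plus an $L^2$ forcing from $u_2$. In \eqref{204}, after substituting $v_1=u_1/|X|$ (and $v_1=0$ on $\{X=0\}$), a short computation using that $\Gamma_2$ is a cone shows that on each of $\{X>0\}$ and $\{X<0\}$ the feedback $\beta_2^{*}$ equals the $\arg\min$ over $\Gamma_2$ of a strongly convex quadratic (modulus $\geq\epsilon$ by Remark~\ref{rem:invertible}) whose linear part is affine in $X$, hence Lipschitz in $X$ with modulus $\leq c(1+|\Lambda_1|+|\Lambda_2|)$ and an $L^2$ forcing from $u_1$ (the single-point discontinuity of $\beta_2^{*}$ at $X=0$ is harmless, the diffusion coefficient being non-degenerate there unless $u_1$ itself vanishes). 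Since $\Lambda_k(\cdot,i)\in L^{2,\mathrm{BMO}}_{\mathcal{F}^W}(0,T;\mathbb{R}^n)$ by Theorem~\ref{Constrained BSDE}, the basic theorem on pp.~756--757 of \cite{L. I. Gal'Chuk} applies exactly as in Lemma~\ref{lem:admissible} and gives a unique strong solution $X(\cdot)$. Pathwise uniqueness also delivers the non-anticipativity required for $\beta_2\in\widetilde{\mathcal A}_2$: if $u_1=\overline u_1$ on $[0,\tau]$, then the corresponding states, hence the processes $v_1$, coincide on $[0,\tau]$, so $\beta_2(u_1)=\beta_2(\overline u_1)$ there.

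For the square-integrability in (c) we would rerun the energy estimate of Lemma~\ref{lem:admissible} with the Lyapunov function $P_1(t,\alpha_t)\big(X(t)^{+}\big)^2+P_2(t,\alpha_t)\big(X(t)^{-}\big)^2$ (no linear term, since $b=\sigma=0$): apply the generalized It\^{o} formula to it (valid since $x\mapsto(x^{\pm})^2$ is $C^1$ with Lipschitz derivative, so no local-time term arises) together with the It\^{o} formula for the Markov chain and \eqref{404}--\eqref{405}, add $\int_0^{t\wedge\tau_k}(KX^2+u^{\top}Ru)\ds$ to both sides, and rearrange on $\{X>0\}$ and $\{X<0\}$ separately. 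On $\{X>0\}$ the terms linear in $u_2$ collapse, via $\hat\beta_{21}$ and the identity $\widetilde H_{11}=\widetilde H_{21}$ of Lemma~\ref{Minimax theorem}, into $f_{21}$, and then the terms involving $u_1$ collapse, via $\hat v_{11}$ and $\widetilde H_1=\widetilde H_{11}$, into $\widetilde H_1$, matching the drift of $P_1$ in \eqref{404} so that the $X^2$-coefficient cancels; symmetrically on $\{X<0\}$ with $\hat v_{12},\hat\beta_{22},\widetilde H_2$ and \eqref{405}. What remains on the left is $-u_1^{*\top}R_{11}u_1^{*}\geq(\epsilon+\bar\epsilon\bar c_2)|u_1^{*}|^2$ by Assumption~\ref{ass2}, while the right-hand side is controlled by the boundedness of the coefficients and of $\bm P_k$, the BMO property of $\bm\Lambda_k$, and Young's inequality applied to the cross term $2u_1^{*\top}R_{12}u_2$; localising by $\tau_k\uparrow\infty$ (the stochastic integrals are only local martingales and $X$ is only pathwise bounded) and letting $k\to\infty$ by monotone and dominated convergence yields $\mathbb E\int_0^T|u_1^{*}|^2\dt<\infty$, and symmetrically $\mathbb E\int_0^T|\beta_2^{*}|^2\dt<\infty$. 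We expect this last energy estimate to be the main obstacle: the completing-the-square bookkeeping must be carried out simultaneously on $\{X>0\}$ and $\{X<0\}$ for two coupled SREs and in the presence of the unbounded $\Lambda_k$-terms, and one must verify that the selections $\hat v_{1k},\hat\beta_{2k}$ genuinely realise the minimax values of Lemma~\ref{Minimax theorem} once $v_k=u_k/|X|$ has been substituted; by contrast, the measurability and the SDE well-posedness are routine given the piecewise-linear-in-$X$ structure and the BMO bounds of Theorem~\ref{Constrained BSDE}.
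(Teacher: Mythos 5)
Your proposal is correct and follows essentially the same route as the paper: the same two-step verification, the Lyapunov function $P_1(t,\alpha_t)\big(X(t)^{+}\big)^2+P_2(t,\alpha_t)\big(X(t)^{-}\big)^2$, well-posedness of the closed-loop SDEs via Gal'chuk using the bounds $|\hat v_{1k}|,|\hat\beta_{2k}|\leq c(1+|v|+|\Lambda_k|)$ and the BMO property from Theorem~\ref{Constrained BSDE}, and the localized energy estimate with Assumption~\ref{ass2}, Young's inequality for the cross term, and monotone/dominated convergence. The only cosmetic difference is that the paper does not exactly ``cancel'' the $X^2$-coefficient but exhibits the residual $\phi$ of \eqref{409} and shows it has a definite sign ($\geq 0$ in Step 1, $\leq 0$ in Step 2) via the minimax identities, which is precisely the inequality your subsequent estimate relies on; your added measurable-selection remark is a harmless supplement the paper leaves implicit.
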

\begin{proof}
We only prove \eqref{406} is admissible, and the proof of \eqref{407} is similar.
By definition, for all $i\in \mathcal{M}$, $k\in\{1,2\}$, $\hat{v}_{1k}\in \Gamma_1$, $\hat{\beta}_{2k} \in \Gamma_2$, so we have $u_1^{*}(\cdot)\in \Gamma_1$, $\beta_2^{*}(\cdot)\in \Gamma_2$.
Similar to Lemma \ref{lem:admissible}, the remaining proof consists of two steps.

\textbf{Step 1.} Let $u_2(\cdot)\in \widetilde{\mathcal{U}}_2$. Then SDE \eqref{203} with $\pi=u_1^{*}$ is
\begin{equation}\label{408}
\left\{
\begin{aligned}
\dd X(t)=\,&\Big[ AX(t)+B_1^{\top}
\big[ \hat{v}_{11}(t)X(t)^{+} +\hat{v}_{12}(t)X(t)^{-} \big]+B_2^{\top}u_2(t) \Big]\dt\\
&+ \Big[ CX(t)+D_1\big[ \hat{v}_{11}(t)X(t)^{+} +\hat{v}_{12}(t)X(t)^{-} \big]
+D_2u_2(t) \Big]^{\top}\dw(t),\\
X(0)=\,&x\in\mathbb{R},\quad \alpha_0=i_0\in \mathcal{M},
\end{aligned}
\right.
\end{equation}
where we abbreviate $\hat{v}_{1k}(t,\alpha_t,P_{k}(t,\alpha_t),\Lambda_{k}(t,\alpha_t))$ to $\hat{v}_{1k}(t)$, $k\in\{1,2\}$.
By the definition of $\hat{v}_{1k}$, for all $i\in \mathcal{M}$, $k\in\{1,2\}$, we have
$|\hat{v}_{1k} (P_{k}(t,i),\Lambda_{k}(t,i))| \leq c (1+|\Lambda_{k}(t,i)|)$.
From Theorem \ref{Constrained BSDE}, we know that $\Lambda_{k}(\cdot,i)\in L^{2,\mathrm{BMO}}_{\mathcal{F}^W} (0,T;\mathbb{R}^n)$ for all $i \in \mathcal{M}$, $k\in\{1,2\}$.
By the basic theorem on pp. 756-757 of \cite{L. I. Gal'Chuk}, SDE \eqref{408} has a unique strong solution. Furthermore, we have
$$|u_1^{*}(t,\alpha_t,X(t))| \leq c \big(1+|\Lambda_{1}(t,\alpha_t)|+|\Lambda_{2}(t,\alpha_t)|\big)|X(t)|,$$ which guarantees that $\int_0^T |u_1^{*}(t,\alpha_t,X(t))| ^2\dt<\infty$.

Similar to Step 1 in Lemma \ref{lem:admissible}, applying It\^{o}'s formula to $ P_{1}(t,\alpha_t)[X(t)^{+}]^2+P_{2}(t,\alpha_t)[X(t)^{-}]^2$, where $X(\cdot)$ is the solution of \eqref{408},  
we get
\begin{align} \label{401}
&\mathbb{E}\big[P_{1}(T\wedge\tau_k)[X(T\wedge\tau_k)^{+}]^2
+P_{2}(T\wedge\tau_k)[X(T\wedge\tau_k)^{-}]^2\big]\\
&+\mathbb{E}\bigg[ \int_0^{T\wedge\tau_k} \Big( KX^2+(u_1^{*})^{\top}R_{11}u_1^{*}+2(u_1^{*})^{\top}R_{12}u_2
+u_2^{\top}R_{22}u_2 \Big)\ds\bigg]\nn\\
=\,&\mathbb{E}\bigg[ \int_0^{T\wedge\tau_k} \phi(P_{1},\Lambda_{1},P_{2},\Lambda_{2},X,u_1^{*},u_2)\ds\bigg]+P_{1}(0,i_0)(x^{+})^2+P_{2}(0,i_0)(x^{-})^2,\nn
\end{align}
where 
\begin{equation*}\label{409}
\begin{aligned}
&\phi(t,i,P_1,\Lambda_1,P_2,\Lambda_2,X,u_1,u_2)\\
\triangleq\,&\big[ u_1^{\top}\widehat{R}_{11}(P_1)u_1+2u_1^{\top}\widehat{R}_{12}(P_1)u_2 +u_2^{\top}\widehat{R}_{22}(P_1)u_2\big]I_{\{X>0\}}\\
&+2X^{+}\big[\widehat{C}_{1}(P_1,\Lambda_1)^{\top}u_1 + \widehat{C}_{2}(P_1,\Lambda_1)^{\top}u_2 \big] -(X^{+})^2\widetilde{H}_{1}(P_1,\Lambda_1) \\
&+\big[ u_1^{\top}\widehat{R}_{11}(P_{2})u_1+2u_1^{\top}\widehat{R}_{12}(P_{2})u_2
+u_2^{\top}\widehat{R}_{22}(P_{2})u_2\big]I_{\{X<0\}}\\
&-2 X^{-}\big[\widehat{C}_{1}(P_{2},\Lambda_{2})^{\top}u_1
+ \widehat{C}_{2}(P_{2},\Lambda_{2})^{\top}u_2\big]
-(X^{-})^2\widetilde{H}_{2}(P_{2},\Lambda_{2}).
\end{aligned}
\end{equation*}
If $X(t) = 0$, then $\phi(P_{1},\Lambda_{1},P_{2},\Lambda_{2},X,u_1^{*},u_2)=0$.
If $X(t) > 0$, on recalling the definition of $\widetilde{H}_{1}$, we have
\begin{equation*}
\phi(P_{1},\Lambda_{1},P_{2},\Lambda_{2},X,u_1^{*},u_2)
=(X^{+})^2 \big[\mathcal{H}_{1}(P_1,\Lambda_1,\hat{v}_{11},v_2) -\widetilde{H}_{1}(P_1,\Lambda_1) \big]\geq 0.
\end{equation*}
If $X(t) < 0$, on recalling the definition of $\widetilde{H}_{2}$, we have
\begin{equation*}
\phi(P_{1},\Lambda_{1},P_{2},\Lambda_{2},X,u_1^{*},u_2)
=(X^{-})^2 \big[ \mathcal{H}_{2}(P_2,\Lambda_2,\hat{v}_{12},v_2)-\widetilde{H}_{2}(P_2,\Lambda_2)\big] \geq0.
\end{equation*}
From \eqref{401} and Assumption \ref{ass2}, we obtain
\begin{equation*} \begin{aligned}
&(\epsilon+\bar{\epsilon} \bar{c}_2)\mathbb{E}\bigg[ \int_0^{T\wedge\tau_k} |u_1^{*}|^2\ds\bigg]
+P_{1}(0,i_0)(x^{+})^2+P_{2}(0,i_0)(x^{-})^2\\
\leq\,&\mathbb{E}\bigg[ \int_0^{T\wedge\tau_k} \Big( KX^2+u_2^{\top}R_{22}u_2 \Big)\ds \bigg]
+\frac{\epsilon+\bar{\epsilon} \bar{c}_2}{2}\mathbb{E}\bigg[ \int_0^{T\wedge\tau_k} |u_1^{*}|^2\ds\bigg]\\
&+\frac{2}{\epsilon+\bar{\epsilon} \bar{c}_2}\mathbb{E}\bigg[ \int_0^{T\wedge\tau_k} |R_{12}u_2|^2\ds\bigg]
+\mathbb{E}\big[P_{1}(T\wedge\tau_k)[X(T\wedge\tau_k)^{+}]^2 +P_{2}(T\wedge\tau_k)[X(T\wedge\tau_k)^{-}]^2 \big].
\end{aligned}\end{equation*}
Letting $k\rightarrow \infty$ in above, by the monotone and dominated convergence theorems, we obtain
$u_1^{*}(\cdot,i,X(\cdot))\in L^2_{\mathcal{F}}(0,T; \mathbb{R}^{m_1})$ for all $i\in \mathcal{M}$.

\textbf{Step 2.}
Let $u_1(\cdot)\in \widetilde{\mathcal{U}}_1$. Then SDE \eqref{204} with $\Pi=\beta_2^{*}$ is
\begin{equation}\label{410}
\left\{
\begin{aligned}
\dd X(t)=\,&\Big[ AX(t)+B_1^{\top}u_1(t)
+B_2^{\top}\big[ \hat{\beta}_{21}(t,\alpha_t)X(t)^{+} +\hat{\beta}_{22}(t)X(t)^{-} \big] \Big]\dt\\
&+ \Big[ CX(t)+D_1u_1(t)
+D_2\big[ \hat{\beta}_{21}(t,\alpha_t)X(t)^{+} +\hat{\beta}_{22}(t)X(t)^{-} \big] \Big]^{\top}\dw(t),\\
X(0)=\,&x\in\mathbb{R},\quad \alpha_0=i_0\in \mathcal{M},
\end{aligned}
\right.
\end{equation}
where we abbreviate $\hat{\beta}_{2k}(t,\alpha_t,P_{k}(t,\alpha_t),\Lambda_{k}(t,\alpha_t),v_1(t))$ to $\hat{\beta}_{2k}(t)$, $k\in\{1,2\}$.
By the definition of $\hat{\beta}_{2k}$, for all $i\in \mathcal{M}$, $k\in\{1,2\}$, we have
$|\hat{\beta}_{2k} (,P_{k}(t,i),\Lambda_{k}(t,i),v_1(t))| \leq c (1+|v_1(t)|+|\Lambda_{k}(t,i)|)$.
By the basic theorem on pp. 756-757 of \cite{L. I. Gal'Chuk}, for any $u_1(\cdot)\in \widetilde{\mathcal{U}}_1$, SDE \eqref{410} has a unique strong solution. Furthermore, we have
$|\beta_2^{*}(t,\alpha_t,X(t),u_1(t))|
\leq c (1+|v_1(t)|+|\Lambda_{1}(t,\alpha_t)|+|\Lambda_{2}(t,\alpha_t)|)|X(t)|$
and
$\int_0^T |\beta_2^{*}(t,\alpha_t,X(t),u_1(t))|^2\dt<\infty$.

Similar to Step 1, applying It\^{o}'s formula to $ P_{1}(t,\alpha_t)[X(t)^{+}]^2+P_{}(t,\alpha_t)[X(t)^{-}]^2$, where $X(\cdot)$ is the solution of \eqref{410},  
we obtain
\begin{equation} \label{402}
\begin{aligned}
&\mathbb{E}\big[P_{1}(T\wedge\tau_k)[X(T\wedge\tau_k)^{+}]^2
+P_{2}(T\wedge\tau_k)[X(T\wedge\tau_k)^{-}]^2\big]\\
&+\mathbb{E}\bigg[ \int_0^{T\wedge\tau_k} \Big( KX^2+u_1^{\top}R_{11}u_1+2u_1^{\top}R_{12}\beta_2^{*}
+(\beta_2^{*})^{\top}R_{22}\beta_2^{*} \Big)\ds\bigg]\\
=\,&\mathbb{E}\bigg[ \int_0^{T\wedge\tau_k} \phi(P_{1},\Lambda_{1},P_{2},\Lambda_{2},X,u_1,\beta_2^{*})\ds\bigg]
+P_{1}(0,i_0)(x^{+})^2+P_{2}(0,i_0)(x^{-})^2.
\end{aligned}\end{equation}
If $X(t) > 0$, on recalling the definition of $\widetilde{H}_{1}$, we have
\begin{equation*}
\phi(P_{1},\Lambda_{1},P_{2},\Lambda_{2},X,u_1,\beta_2^{*})
=(X^{+})^2 \big[ \mathcal{H}_{1}(P_1,\Lambda_1,v_1,\hat{\beta}_{21}) -\widetilde{H}_{1}(P_1,\Lambda_1) \big]\leq 0.
\end{equation*}
If $X(t) < 0$, on recalling the definition of $\widetilde{H}_{2}$, we have
\begin{equation*}
\phi(P_{1},\Lambda_{1},P_{2},\Lambda_{2},X,u_1,\beta_2^{*})
=(X^{-})^2 \big[ \mathcal{H}_{2}(P_2,\Lambda_2,v_1,\hat{\beta}_{22}) -\widetilde{H}_{2}(P_2,\Lambda_2)\big]\leq0.
\end{equation*}
From \eqref{402} and Assumption \ref{ass2}, we obtain
\begin{equation*} \begin{aligned}
&(\epsilon+\bar{\epsilon} \bar{c}_2)\mathbb{E}\bigg[ \int_0^{T\wedge\tau_k} |\beta_2^{*}|^2\ds\bigg]
-P_{1}(0,i_0)(x^{+})^2-P_{2}(0,i_0)(x^{-})^2\\
\leq\,&-\mathbb{E}\bigg[ \int_0^{T\wedge\tau_k} \Big(KX^2+u_1^{\top}R_{11}u_1 \Big)\ds \bigg]
+\frac{\epsilon+\bar{\epsilon} \bar{c}_2}{2}\mathbb{E}\bigg[ \int_0^{T\wedge\tau_k} |\beta_2^{*}|^2\ds\bigg]\\
&+\frac{2}{\epsilon+\bar{\epsilon} \bar{c}_2}\mathbb{E}\bigg[ \int_0^{T\wedge\tau_k} |R_{12}^{\top}u_1|^2\ds\bigg]
-\mathbb{E}\big[ P_{1}(T\wedge\tau_k)[X(T\wedge\tau_k)^{+}]^2 +P_{2}(T\wedge\tau_k)[X(T\wedge\tau_k)^{-}]^2\big].
\end{aligned}\end{equation*}
Letting $k\rightarrow \infty$ in above, by the monotone and dominated convergence theorems, we obtain
$\beta_2^{*}(\cdot,i,X(\cdot),u_1(\cdot))\in L^2_{\mathcal{F}}(0,T; \mathbb{R}^{m_2})$ for all $i\in \mathcal{M}$.
\end{proof}

Next, we give the solution for the constrained LQ game \eqref{201b}-\eqref{202}.

\begin{theorem} [Solution for the constrained LQ game \eqref{201b}-\eqref{202}]\label{Constrained Verification Theorem}
For any $(x,i_0)\in \mathbb{R} \times \mathcal{M}$, the constrained LQ game \eqref{201b}-\eqref{202} admits an optimal control-strategy pair $(u_1^{*}, \beta_2^{*})$ (resp., $(u_2^{*}, \beta_1^{*})$) for Player 1's (resp., Player 2's) value, which is in a
feedback form and defined by \eqref{406} (resp., \eqref{407}). Moreover, the game has a value, given by
$$\widetilde{V}(x,i_0)=P_{1}(0,i_0)(x^{+})^2+P_{2}(0,i_0)(x^{-})^2,$$
where $\big(P_k(\cdot,i),\Lambda_k(\cdot,i)\big)_{i\in \mathcal{M}}$, $k\in\{1,2\}$, are solutions of \eqref{404}.
\end{theorem}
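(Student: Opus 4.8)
The plan is to run the argument of Theorems~\ref{theorem:1's verification theorem}--\ref{theorem:2's verification theorem}, with the single SRE \eqref{304} and linear BSDE \eqref{305} replaced by the pair \eqref{404}--\eqref{405} (solvable by Theorem~\ref{Constrained BSDE}) and the test quadratic $P(t,\alpha_t)X(t)^2+2\varphi(t,\alpha_t)X(t)$ replaced by $P_1(t,\alpha_t)[X(t)^+]^2+P_2(t,\alpha_t)[X(t)^-]^2$. Admissibility of the feedback pairs \eqref{406} and \eqref{407} is already Lemma~\ref{constained admissible}, so only three further ingredients are needed.

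First I would record the \emph{fundamental identity}: for every $(u_1,u_2)\in\widetilde{\mathcal{U}}_1\times\widetilde{\mathcal{U}}_2$ with state $X(\cdot)$,
\[
J_{x,i_0}(u_1,u_2)=P_1(0,i_0)(x^+)^2+P_2(0,i_0)(x^-)^2+\mathbb{E}\int_0^T\phi\big(t,\alpha_t,P_1(t,\alpha_t),\Lambda_1(t,\alpha_t),P_2(t,\alpha_t),\Lambda_2(t,\alpha_t),X(t),u_1(t),u_2(t)\big)\dt,
\]
with $\phi$ as in \eqref{409}. This is obtained exactly as \eqref{401}--\eqref{402}: apply It\^{o}'s formula for Markov chains (as in the proof of Lemma~\ref{lem:admissible}) to $P_1(t,\alpha_t)[X(t)^+]^2+P_2(t,\alpha_t)[X(t)^-]^2$ along \eqref{201b}, add $\int_0^t(KX^2+u^\top Ru)\ds$ to both sides, localise along stopping times $\tau_n\uparrow\infty$, and pass to the limit using the boundedness of $P_1,P_2$, the BMO property of $\Lambda_1,\Lambda_2$ (Theorem~\ref{Constrained BSDE}), $u_1,u_2\in L^2_{\mathcal{F}}$ and $X\in L^2_{\mathcal{F}}(C(0,T);\mathbb{R})$; the only change from Lemma~\ref{constained admissible} is that $u_1,u_2$ are now arbitrary admissible controls rather than feedback ones.

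Next I would prove the analogue of Lemma~\ref{lem:optimality}: for $(u_1^*,\beta_2^*)$ of \eqref{406}, (i)~$J_{x,i_0}(u_1,\beta_2^*(u_1))\leq J_{x,i_0}(u_1,\beta_2(u_1))$ for all $u_1\in\widetilde{\mathcal{U}}_1$, $\beta_2\in\widetilde{\mathcal{A}}_2$, with equality iff $\beta_2(u_1)=\beta_2^*(u_1)$; and (ii)~$J_{x,i_0}(u_1^*,\beta_2^*(u_1^*))\geq J_{x,i_0}(u_1,\beta_2^*(u_1))$ for all $u_1$, with equality iff $u_1=u_1^*$. For (ii), substituting $u_2=\beta_2^*$ into $\phi$ gives, on $\{X>0\}$ with $v_1=u_1/X^+$, $\phi=(X^+)^2\big[v_1^\top\widehat{R}_{11}v_1+2\widehat{C}_1^\top v_1+f_{21}(v_1)-\widetilde{H}_1\big]\leq0$, since $v_1^\top\widehat{R}_{11}v_1+2\widehat{C}_1^\top v_1+f_{21}(v_1)\leq\widetilde{H}_{11}=\widetilde{H}_1$ by Lemma~\ref{Minimax theorem}, with equality iff $v_1=\hat{v}_{11}$, i.e. iff $u_1=u_1^*$; symmetrically on $\{X<0\}$ with $\widetilde{H}_2$ and $\hat{v}_{12}$, while $\phi=0$ on $\{X=0\}$. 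Combined with the fundamental identity, $J_{x,i_0}(u_1,\beta_2^*(u_1))\leq P_1(0,i_0)(x^+)^2+P_2(0,i_0)(x^-)^2$, equality iff $u_1=u_1^*$; hence $J_{x,i_0}(u_1^*,\beta_2^*(u_1^*))=P_1(0,i_0)(x^+)^2+P_2(0,i_0)(x^-)^2$. For (i), note that $\inf_{\beta_2\in\widetilde{\mathcal{A}}_2}J_{x,i_0}(u_1,\beta_2(u_1))=\inf_{u_2\in\widetilde{\mathcal{U}}_2}J_{x,i_0}(u_1,u_2)$ (constant strategies are admissible), and the latter is a closed-convex-cone LQ control problem for Player~2 that is \emph{strictly convex} in $u_2$ because $\widehat{R}_{22}(P_1),\widehat{R}_{22}(P_2)\geq\epsilon I_{m_2}>0$ (Remark~\ref{rem:invertible}, using the specific $\epsilon$); the variational inequality for the $\Gamma_2$-minimiser — which is exactly what makes $\hat{\beta}_{21},\hat{\beta}_{22}$ the pointwise best response — then identifies $\beta_2^*(u_1)$ as the unique minimiser, so $J_{x,i_0}(u_1,\beta_2^*(u_1))=\inf_{u_2\in\widetilde{\mathcal{U}}_2}J_{x,i_0}(u_1,u_2)$.

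Finally I would conclude verbatim as in Theorem~\ref{theorem:1's verification theorem}: by (ii), $J_{x,i_0}(u_1^*,\beta_2^*(u_1^*))=\sup_{u_1}J_{x,i_0}(u_1,\beta_2^*(u_1))$; by (i), $J_{x,i_0}(u_1,\beta_2^*(u_1))=\inf_{\beta_2}J_{x,i_0}(u_1,\beta_2(u_1))$ for each $u_1$; therefore
\[
J_{x,i_0}(u_1^*,\beta_2^*(u_1^*))=\sup_{u_1}\inf_{\beta_2}J_{x,i_0}(u_1,\beta_2(u_1))\leq\inf_{\beta_2}\sup_{u_1}J_{x,i_0}(u_1,\beta_2(u_1))=\widetilde{V}_1(x,i_0),
\]
while $J_{x,i_0}(u_1^*,\beta_2^*(u_1^*))=\sup_{u_1}J_{x,i_0}(u_1,\beta_2^*(u_1))\geq\inf_{\beta_2}\sup_{u_1}J_{x,i_0}(u_1,\beta_2(u_1))=\widetilde{V}_1(x,i_0)$ since $\beta_2^*$ is a particular strategy; so $(u_1^*,\beta_2^*)$ is optimal for Player~1's value and $\widetilde{V}_1(x,i_0)=P_1(0,i_0)(x^+)^2+P_2(0,i_0)(x^-)^2$. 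The claim for Player~2 follows by the mirror argument with \eqref{407} in place of \eqref{406}, the sign of $\phi$ reversed, and Lemma~\ref{constained admissible} applied to \eqref{407}, yielding the same value $\widetilde{V}_2(x,i_0)=P_1(0,i_0)(x^+)^2+P_2(0,i_0)(x^-)^2$ (so the constrained game has a value). The hard part is Step~(i) of the optimality lemma: showing that Player~2's pointwise feedback response $\beta_2^*$ is globally optimal for the random-coefficient, cone-constrained LQ minimisation induced by an arbitrary $u_1$, and that minimising over non-anticipative strategies does not lower the value below minimising over controls. This rests on the strict convexity coming from $\widehat{R}_{22}(P_k)\geq\epsilon I_{m_2}$ — precisely why $\epsilon,\bar{\epsilon}$ and Assumptions~\ref{ass1}--\ref{ass3} are set up as they are — on a variational-inequality argument over the cone $\Gamma_2$, and on a careful treatment of how the state trajectory depends on $u_2$, the same delicate points already handled in the proofs of Lemmas~\ref{lem:optimality} and \ref{constained admissible}.
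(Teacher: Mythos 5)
Your admissibility step, your fundamental identity, and your computation that $\phi(\cdot,u_1,\beta_2^{*})\leq 0$ (hence $J_{x,i_0}(u_1,\beta_2^{*}(u_1))\leq P_{1}(0,i_0)(x^{+})^2+P_{2}(0,i_0)(x^{-})^2$ for every $u_1\in\widetilde{\mathcal{U}}_1$, giving the upper bound on $\widetilde{V}_1$) all match the paper. The genuine gap is your step (i), the claim that $\beta_2^{*}$ is the \emph{global} best response to an arbitrary fixed $u_1$, i.e.\ $J_{x,i_0}(u_1,\beta_2^{*}(u_1))=\inf_{u_2\in\widetilde{\mathcal{U}}_2}J_{x,i_0}(u_1,u_2)$. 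Your justification — strict convexity of $\widehat{R}_{22}(P_k)$ plus the variational inequality defining $\hat{\beta}_{2k}$ — only shows that $\hat{\beta}_{2k}$ minimizes the \emph{pointwise} Hamiltonian; it does not show global optimality for the dynamic subproblem $\min_{u_2}J_{x,i_0}(u_1,u_2)$, which for a fixed nonzero $u_1$ is an \emph{inhomogeneous} cone-constrained LQ problem that the Section~\ref{Game} machinery (built on the $[X^+]^2,[X^-]^2$ decomposition, which requires homogeneity) cannot handle, and which would need its own $u_1$-dependent Riccati equation. Nor can the completed-square identity deliver it: for a general pair $(u_1,u_2)$ the integrand $\phi$ in \eqref{409} has no sign, so there is no analogue of the unconstrained decomposition \eqref{3005} splitting $\phi$ into two separately signed quadratics.

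Fortunately step (i) is unnecessary, and the paper avoids it. The lower bound $\widetilde{V}_1\geq P_{1}(0,i_0)(x^{+})^2+P_{2}(0,i_0)(x^{-})^2$ comes from the \emph{other} sign inequality, already established in Step~1 of Lemma~\ref{constained admissible}: $\phi(\cdot,u_1^{*},u_2)\geq 0$ for \emph{every} $u_2\in\widetilde{\mathcal{U}}_2$, hence via \eqref{401} one gets $J_{x,i_0}(u_1^{*},\beta_2(u_1^{*}))\geq P_{1}(0,i_0)(x^{+})^2+P_{2}(0,i_0)(x^{-})^2$ for every $\beta_2\in\widetilde{\mathcal{A}}_2$, and therefore
\begin{equation*}
\widetilde{V}_1(x,i_0)=\inf_{\beta_2}\sup_{u_1}J_{x,i_0}(u_1,\beta_2(u_1))\geq\inf_{\beta_2}J_{x,i_0}(u_1^{*},\beta_2(u_1^{*}))\geq P_{1}(0,i_0)(x^{+})^2+P_{2}(0,i_0)(x^{-})^2.
\end{equation*}
Combining this with your upper bound sandwiches $\widetilde{V}_1$, and optimality of $(u_1^{*},\beta_2^{*})$ follows from $\phi(\cdot,u_1^{*},\beta_2^{*})=0$. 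So you should replace your best-response lemma by this direct use of $\phi(\cdot,u_1^{*},u_2)\geq 0$; with that substitution the rest of your argument (and the mirror argument for Player~2) goes through.
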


\begin{proof}
We only prove the theorem for Player 1, and that of Player 2 is similar.

On one hand, we get from \eqref{401} that
\begin{equation*}
\begin{aligned}
&\mathbb{E}\big[P_{1}(T\wedge\tau_k,\alpha_{T\wedge\tau_k})[X(T\wedge\tau_k)^{+}]^2
+P_{2}(T\wedge\tau_k,\alpha_{T\wedge\tau_k})[X(T\wedge\tau_k)^{-}]^2\big]\\
&+\mathbb{E}\bigg[ \int_0^{T\wedge\tau_k} \Big( KX^2+(u_1^{*})^{\top}R_{11}u_1^{*}+2(u_1^{*})^{\top}R_{12}u_2
+u_2^{\top}R_{22}u_2 \Big)\ds\bigg]\\
\geq\,& P_{1}(0,i_0)(x^{+})^2+P_{2}(0,i_0)(x^{-})^2,
\end{aligned}\end{equation*}
Letting $k\rightarrow \infty$, by the dominated convergence theorem, we obtain
\begin{equation*}
J_{x,i_0}( u_1^{*},\beta_2(u_1^{*}))\geq P_{1}(0,i_0)(x^{+})^2+P_{2}(0,i_0)(x^{-})^2
\end{equation*}
for any $\beta_2\in\widetilde{\mathcal{A}}_2$.
Then, by the definition of $\widetilde{V}_1(x,i_0)$, we have
\begin{equation*} 
\widetilde{V}_1(x,i_0)
\geq\inf \limits_{\beta_2\in\widetilde{\mathcal{A}}_2} J_{x,i_0}( u_1^{*},\beta_2(u_1^{*}))
\geq P_{1}(0,i_0)(x^{+})^2+P_{2}(0,i_0)(x^{-})^2.
\end{equation*}

On the other hand, we get from \eqref{402} that
\begin{equation*}
\begin{aligned}
&\mathbb{E}\big[P_{1}(T\wedge\tau_k,\alpha_{T\wedge\tau_k})[X(T\wedge\tau_k)^{+}]^2
+P_{2}(T\wedge\tau_k,\alpha_{T\wedge\tau_k})[X(T\wedge\tau_k)^{-}]^2\big]\\
&+\mathbb{E}\bigg[ \int_0^{T\wedge\tau_k} \Big( KX^2+u_1^{\top}R_{11}u_1+2u_1^{\top}R_{12}\beta_2^{*}
+(\beta_2^{*})^{\top}R_{22}\beta_2^{*} \Big)\ds\bigg]\\
\leq\,&P_{1}(0,i_0)(x^{+})^2+P_{2}(0,i_0)(x^{-})^2.
\end{aligned}\end{equation*}
Letting $k\rightarrow \infty$, by the dominated convergence theorem, we obtain
\begin{equation*}
J_{x,i_0}( u_1,\beta_2^{*}(u_1))\leq P_{1}(0,i_0)(x^{+})^2+P_{2}(0,i_0)(x^{-})^2
\end{equation*}
for any $u_1\in\widetilde{\mathcal{U}}_1$.
Then, by the definition of $\widetilde{V}_1(x,i_0)$, we have
\begin{equation*} 
\widetilde{V}_1(x,i_0)
\leq \sup\limits_{u_1\in\widetilde{\mathcal{U}}_1} J_{x,i_0}( u_1,\beta_2^{*}(u_1))\leq P_{1}(0,i_0)(x^{+})^2+P_{2}(0,i_0)(x^{-})^2.
\end{equation*}
Combining the above estimates, we get $\widetilde{V}_1(x,i_0)=P_{1}(0,i_0)(x^{+})^2+P_{2}(0,i_0)(x^{-})^2.$
Noticing 
$$\phi(P_{1},\Lambda_{1},P_{2},\Lambda_{2},X,u_1^{*},\beta_2^{*})=0,$$ 
it is not hard to show $(u_1^{*}, \beta_2^{*})$ is optimal. The proof is complete.
\end{proof}

\begin{corollary}
For any $(x,i_0)\in \mathbb{R} \times \mathcal{M}$, we have
$u_1^{*}=\beta_1^{*}(u_2^{*})$ and $u_2^{*}=\beta_2^{*}(u_1^{*}),$
where $u_1^{*}$, $u_2^{*}$, $\beta_1^{*}$, $\beta_2^{*}$ are defined by \eqref{406} and \eqref{407}.
Moreover, the value of the constrained LQ game \eqref{201b}-\eqref{202} satisfies
$$\widetilde{V}(x,i_0)= J_{x,i_0}\big( u_1^{*}, u_2^{*}),$$
where $\big(P_k(\cdot,i),\Lambda_k(\cdot,i)\big)_{i\in \mathcal{M}}$, $k\in\{1,2\}$, are solutions of \eqref{404}.
\end{corollary}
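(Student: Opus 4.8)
The plan is to read off the corollary from the two halves of Theorem~\ref{Constrained Verification Theorem} together with a pointwise game-theoretic identity showing that, for each player $k$, the feedback control $u_k^{*}$ and the feedback strategy $\beta_k^{*}$, evaluated along a common trajectory, are mutual best responses. Since Theorem~\ref{Constrained Verification Theorem} already gives $\widetilde V_1(x,i_0)=\widetilde V_2(x,i_0)=P_1(0,i_0)(x^{+})^2+P_2(0,i_0)(x^{-})^2$, the value $\widetilde V(x,i_0)$ of the constrained game exists and equals the right-hand side; what remains is to verify $u_1^{*}=\beta_1^{*}(u_2^{*})$, $u_2^{*}=\beta_2^{*}(u_1^{*})$, and $J_{x,i_0}(u_1^{*},u_2^{*})=\widetilde V(x,i_0)$.

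\textbf{Step 1 (a pointwise saddle identity).} Fix $(t,\omega,i)$ and a pair $(P,\Lambda)\in\interval\times\mathbb{R}^n$, and consider on the compact truncations of $\Gamma_1,\Gamma_2$ from Lemma~\ref{Minimax theorem} the two functions
\[
g^{\pm}(v_1,v_2)=v_1^{\top}\widehat R_{11}v_1\pm2\widehat C_1^{\top}v_1+v_2^{\top}\widehat R_{22}v_2+2v_1^{\top}\widehat R_{12}v_2\pm2\widehat C_2^{\top}v_2,
\]
with $\widehat R,\widehat C$ built from $(P,\Lambda)$. By Assumption~\ref{ass2} and Remark~\ref{rem:invertible} one has $\widehat R_{11}\leq-\epsilon I_{m_1}$ and $\widehat R_{22}\geq\epsilon I_{m_2}$, so $g^{\pm}$ is \emph{strictly} concave in $v_1$ and \emph{strictly} convex in $v_2$; by Sion's theorem (as in Lemma~\ref{Minimax theorem}) $\max_{v_1}\min_{v_2}g^{\pm}=\min_{v_2}\max_{v_1}g^{\pm}$, and by strictness the saddle point $(v_1^{\sharp,\pm},v_2^{\sharp,\pm})$ is \emph{unique}. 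Comparing this with the definitions of $\hat v_{11},\hat v_{21},\hat\beta_{11},\hat\beta_{21}$, which are exactly the outer $\arg\max$, outer $\arg\min$, and the inner $\arg\max/\arg\min$ in the two iterated optimizations of $g^{+}$, uniqueness (together with strict concavity of $v_1\mapsto\min_{v_2}g^{+}$ and strict convexity of $v_2\mapsto\max_{v_1}g^{+}$) forces $\hat v_{11}=v_1^{\sharp,+}=\hat\beta_{11}(\hat v_{21})$ and $\hat v_{21}=v_2^{\sharp,+}=\hat\beta_{21}(\hat v_{11})$; the same argument for $g^{-}$ gives $\hat v_{12}=\hat\beta_{12}(\hat v_{22})$ and $\hat v_{22}=\hat\beta_{22}(\hat v_{12})$. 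Applying these with $(P,\Lambda)=(P_1(t,i),\Lambda_1(t,i))$ for the index-$1$ objects and $(P,\Lambda)=(P_2(t,i),\Lambda_2(t,i))$ for the index-$2$ ones, and using the adaptedness of the optimizers already established in Theorem~\ref{Constrained BSDE} and Lemma~\ref{constained admissible}, the identities hold $\dtp$

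\textbf{Step 2 (closed-loop consistency).} Insert $u_1^{*}$ of \eqref{406} and $u_2^{*}$ of \eqref{407} simultaneously into \eqref{201b}. The coefficients are globally Lipschitz in $X$ (because $X^{+},X^{-}$ are) with random coefficients dominated by $c(1+|\Lambda_1|+|\Lambda_2|)$, $\Lambda_k(\cdot,i)\in L^{2,\mathrm{BMO}}_{\mathcal F^W}$, so the basic theorem on pp.~756--757 of \cite{L. I. Gal'Chuk} yields a unique strong solution $X^{*}$. On $\{X^{*}(t)>0\}$ we have $v_1(t)=u_1^{*}(t)/|X^{*}(t)|=\hat v_{11}(t)$, hence by Step~1 $\beta_2^{*}(u_1^{*})(t)=\hat\beta_{21}(t,\hat v_{11}(t))X^{*}(t)^{+}=\hat v_{21}(t)X^{*}(t)=u_2^{*}(t)$, and symmetrically $\beta_1^{*}(u_2^{*})(t)=u_1^{*}(t)$; on $\{X^{*}(t)<0\}$ the same holds with the index-$2$ objects and $X^{*}(t)^{-}$; on $\{X^{*}(t)=0\}$ all quantities vanish. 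Carrying out the \emph{same} pointwise computation along any solution of the closed-loop equation for the pair $(u_1^{*},\beta_2^{*})$ shows that this equation is identical to the SDE just solved, hence its state is $X^{*}$ and its realized control pair is $(u_1^{*},u_2^{*})$; moreover $(u_1^{*},u_2^{*})\in\widetilde{\mathcal U}_1\times\widetilde{\mathcal U}_2$ by Lemma~\ref{constained admissible}. This proves $u_1^{*}=\beta_1^{*}(u_2^{*})$ and $u_2^{*}=\beta_2^{*}(u_1^{*})$.

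\textbf{Step 3 (value).} Theorem~\ref{Constrained Verification Theorem} gives $J_{x,i_0}(u_1^{*},\beta_2^{*}(u_1^{*}))=\widetilde V_1(x,i_0)=P_1(0,i_0)(x^{+})^2+P_2(0,i_0)(x^{-})^2$ and the same value for $\widetilde V_2(x,i_0)$, so $\widetilde V(x,i_0)$ exists and equals $P_1(0,i_0)(x^{+})^2+P_2(0,i_0)(x^{-})^2$; since by Step~2 the realized controls of $(u_1^{*},\beta_2^{*})$ are precisely $(u_1^{*},u_2^{*})$, we conclude $J_{x,i_0}(u_1^{*},u_2^{*})=\widetilde V(x,i_0)$, which finishes the proof. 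The main obstacle is Step~1: identifying the various $\arg\max/\arg\min$ maps $\hat v_{kk'},\hat\beta_{kk'}$ with a single saddle point and exploiting its uniqueness — which rests essentially on the strict sign conditions in Assumption~\ref{ass2} — while keeping the optimizers $\mathcal F^W_t$-adapted; once that is secured, Steps~2 and~3 are bookkeeping on top of Theorem~\ref{Constrained Verification Theorem}.
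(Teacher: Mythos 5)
Your proof is correct and supplies essentially the argument the paper leaves implicit (the corollary is stated without proof): the saddle-point identity of your Step~1 --- max-min equals min-max with a \emph{unique} saddle point by the strict definiteness of $\widehat{R}_{11}$ and $\widehat{R}_{22}$ from Assumption~\ref{ass2}, hence interchangeability of the iterated optimizers $\hat{v}_{kk'}$ and $\hat{\beta}_{kk'}$ --- is exactly the structure already encoded in Lemma~\ref{Minimax theorem} and in the identity $\phi(P_{1},\Lambda_{1},P_{2},\Lambda_{2},X,u_1^{*},\beta_2^{*})=0$ used at the end of the proof of Theorem~\ref{Constrained Verification Theorem}, and Steps~2--3 are the correct bookkeeping on top of that theorem. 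The one delicate point, the mismatch between the truncation radii $c(1+|\Lambda|)$ and $c(1+|v_k|+|\Lambda|)$ in the definitions of $\hat{v}_{kk'}$ versus $\hat{\beta}_{kk'}$, you dispose of correctly by observing that the unconstrained optimizers over the cones already lie in the smaller balls for $c$ sufficiently large.
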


For the constrained LQ game \eqref{201b}-\eqref{202}, we give two special examples.
\begin{description}
\item[Example 1.]
When $\Gamma_1=\mathbb{R}^{m_1}$ and $\Gamma_2=\mathbb{R}^{m_2}$, the constrained LQ game degenerates into the LQ game discussed in Section \ref{Solution}. In this case,
we have $\widetilde{H}_{1}=\widetilde{H}_{2}=H_1$, and \eqref{404} coincides with \eqref{304}.

\item[Example 2.]
When $\widehat{R}_{12}=\bm{0}$, we have
\begin{align*}
\widetilde{H}_{k}&=
\max \limits_{v_1\in \Gamma_1\atop |v_1|\leq c(1+|\Lambda|)} \big\{ v_1^{\top}\widehat{R}_{11}v_1-2(-1)^{k}\widehat{C}_{1}^{\top}v_1\big\} +\min \limits_{ v_2\in \Gamma_2\atop |v_2|\leq c(1+|\Lambda|)} \big\{v_2^{\top}\widehat{R}_{22}v_2-2(-1)^{k} \widehat{C}_{2}^{\top}v_2\big\},
\end{align*}
and the optimal strategy for Player $k$ with $k\in\{1,2\},$
$$\beta_k^{*}(t,i,X(t))=\hat{\beta}_{k1} (P_{1}(t,i),\Lambda_{1}(t,i))X(t)^{+}
+\hat{\beta}_{k2} (P_{2}(t,i),\Lambda_{2}(t,i))X(t)^{-}$$ does not depend on the opponent's control.
\end{description}

\section{Application to portfolio selection problems}\label{Application}

We consider a financial market consisting of a risk-free asset (the money market instrument or bond) whose price is $S_0$ and two risky securities (the stocks) whose prices are $S_1$ and $S_2$. Assume $W_1$ and $W_2$ are independent standard 1-dimensional Brownian motions. We set $W=(W_1,W_2)^{\top}$.
Their prices are driven by
\begin{equation*}
\left\{
\begin{aligned}
\dd S_0(t)&= r(t,\alpha_t)S_0(t)\dt,\\
\dd S_k(t)&= S_k(t)\big[\mu_k(t,\alpha_t)\dt+\sigma_{k1}(t,\alpha_t)\dw_1(t)+\sigma_{k2}(t,\alpha_t)\dw_2(t)\big],\\
S_0(0)&=s_0,\,\,S_k(0)=s_k,\,\, \alpha_0=i_0\in \mathcal{M},\,\, k\in\{1,2\},
\end{aligned}
\right.
\end{equation*}
where for all $i\in \mathcal{M}$, $r(t,i)$ is the interest rate process, $\mu_k(t, i)$ and
$\sigma_{k}(t, i)=(\sigma_{k1}(t, i),\sigma_{k2}(t, i))$ are the appreciation rate process and volatility rate process of the $k$th risky security corresponding to a market regime $\alpha_t = i$.
And for all $k,k'\in\{1,2\}$ and $i\in \mathcal{M}$, we assume
$r(t, i)$, $\mu_k(t, i)$, $\sigma_{kk'} (t, i) \in L^{\infty}_{\mathcal{F}^W} (0, T;\mathbb{R})$.

Now, we define several constants:
$$
\tilde{\mu}\triangleq\operatorname*{ess\,sup}\limits_{i\in \mathcal{M},\,t\in[0, T]}
\{[\mu_1(t, i)-r(t, i)]^2\vee[\mu_2(t, i)-r(t, i)]^2\},
$$
$$
\bar{\sigma}\triangleq\operatorname*{ess\,sup}\limits_{i\in \mathcal{M},\,t\in[0, T]}
\{\sigma_1(t, i)\sigma_1(t, i)^{\top}\vee \sigma_2(t, i)\sigma_2(t, i)^{\top}\},\,
$$
$$
\underline{\sigma}\triangleq\operatorname*{ess\,inf}\limits_{i\in \mathcal{M},\,t\in[0, T]}
\{\sigma_1(t, i)\sigma_1(t, i)^{\top}\wedge \sigma_2(t, i)\sigma_2(t, i)^{\top}\},
$$
$$\tilde{q}\triangleq \max\limits_{1\leq i,j\leq l}q_{ij},\quad
\tilde{r}\triangleq\operatorname*{ess\,sup}\limits_{i\in \mathcal{M},\,t\in[0, T]} r(t, i),\quad
\epsilon_2\triangleq\frac{ e^{(2\tilde{r}+\tilde{q})lT}-1 +(2\tilde{r}+\tilde{q})le^{(2\tilde{r}+\tilde{q})lT} }
{2(2\tilde{r}+\tilde{q}) l},$$
$$
\epsilon_1\triangleq\frac{2\tilde{\mu}(e^{(2\tilde{r}+\tilde{q})lT}-1)[e^{(2\tilde{r}+\tilde{q})lT}-1
+(2\tilde{r}+\tilde{q})le^{(2\tilde{r}+\tilde{q})lT}]}
{(2\tilde{r}+\tilde{q})^2 l^2}.$$

Suppose there are two players who compete with each other. Both players can invest freely in the risk-free asset, but Player 1 may trade only in the first stock, and similarly, Player 2 may trade only in the second stock. For $k\in\{1,2\}$, let $\pi_k(t)$ denote the amount of Player $k$'s wealth invested in the risky stock $S_k$ at time $t$, and the initial wealth $y_k$ is a positive constant. Both players trade using self-financing strategies, then their wealth processes satisfy
\begin{equation*}
\left\{
\begin{aligned}
\dd Y_k(t)&= \Big[ r(t,\alpha_t)Y_k(t)+\big[\mu_k(t,\alpha_t)-r(t,\alpha_t)\big]\pi_k(t) \Big]\dt+\sigma_{k}(t,\alpha_t)\pi_k(t)\dw(t),\\
Y_k(0)&=y_k,\,\alpha_0=i_0\in \mathcal{M},\quad k\in\{1,2\}.
\end{aligned}
\right.
\end{equation*}
Their wealth difference $X(\cdot)\triangleq Y_1(\cdot)-Y_2(\cdot)$ satisfies
\begin{equation}\label{501}
\left\{
\begin{aligned}
\dd X(t)= \,&\Big[r(t,\alpha_t)X(t)+\big[\mu_1(t,\alpha_t)-r(t,\alpha_t)\big]\pi_1(t) -\big[\mu_2(t,\alpha_t)-r(t,\alpha_t)\big]\pi_2(t)\Big]\dt\\
&+\big[ \sigma_1(t,\alpha_t)\pi_1(t) - \sigma_2(t,\alpha_t)\pi_2(t)\big]\dw(t),\\
X(0)=\,&x\triangleq y_1-y_2,\,\alpha_0=i_0\in \mathcal{M}.
\end{aligned}
\right.
\end{equation}

Player 1 hopes to make his own wealth close to the average wealth of the two players at the end of the investment range. But Player 2 hopes to make the difference $Y_1(T)-\frac{Y_1(T)+Y_2(T)}{2}=\frac{X(T)}{2}$ larger.
At the same time, both players want to take as little risk as possible, which is measured by the amount invested in risky securities. The more money invested in risky securities, the more risk the players take. The functional of this zero-sum game is given as
\begin{equation}\label{502}
J_{x,i_0}( \pi_1,\pi_2)
=\mathbb{E}\bigg[ \int_0^T \Big(-R_{1}(t,\alpha_t)\pi_1(t)^2 + R_{2}(t,\alpha_t)\pi_2(t)^2 \Big)\dt
- \frac{1}{4}X(T)^2 \bigg],
\end{equation}
where for all $i\in \mathcal{M}$, $k\in\{1,2\}$, $R_{k}(t,i)>0$ is Player $k$'s risk weight corresponding to a market regime $\alpha_t= i$. And we assume
$R_{1}(t, i)$, $R_{2} (t, i) \in L^{\infty}_{\mathcal{F}^W} (0, T; \mathbb{R}_{>0})$,
for all $i\in \mathcal{M}$.
In this game, Player 1 aims to maximize functional \eqref{502}, whereas Player 2 aims to minimize
it.
We call this problem the LQ game \eqref{501}-\eqref{502}.

We put the following conditions on the coefficients.

\begin{condition}\label{con1}
$\underline{\sigma}>0$,~
$\operatorname*{ess\,inf}\limits_{i\in \mathcal{M},\,t\in[0, T]} \{R_1(t, i)\wedge R_2(t, i)\}
>\epsilon_1+\bar{\sigma}\epsilon_2$,~
$2\bar{\sigma}<\epsilon_1$.
\end{condition}

For the LQ game \eqref{501}-\eqref{502}, Assumptions \ref{ass1}-\ref{ass3} hold if the coefficients satisfy Condition \ref{con1}.
Next, we consider the LQ game \eqref{501}-\eqref{502} with possible no-shorting portfolio constraints.

\subsection{No portfolio constraint}
In this subsection, we assume there are no trading constraints for both players, namely, $\Gamma_1=\Gamma_2 = \mathbb{R}$.
In this case, \eqref{305} admits a unique solution $(\varphi(\cdot,i),\Delta(\cdot,i))= (0,\bm{0})$, $i\in \mathcal{M}$ and SRE \eqref{304} becomes
\begin{equation}
\label{504}
\left\{
\begin{aligned}
\dd P(t,i)=\,&-\bigg[2rP(t,i)-\frac{\Upsilon(P(t,i),\Lambda(t,i))}{\Theta(P(t,i),\Lambda(t,i))}
+\sum_{j\in \mathcal{M}} q_{ij}P(t,j)\bigg]\dt +\Lambda(t,i)^{\top}\dw(t),\\
P(T,i)=\,&-\frac{1}{4},\,
|P(\cdot,i)|\leq \epsilon_2, \, \text{ for all } \, i\in \mathcal{M},
\end{aligned}
\right.
\end{equation}
where for all $(t,i,P,\Lambda)\in [0,T]\times\mathcal{M}\times[-\epsilon_2,\epsilon_2]\times \mathbb{R}^2$,
$$\Phi_1(t,i,P,\Lambda)\triangleq P(\mu_1-r)+\sigma_1\Lambda,\,\,
\Phi_2(t,i,P,\Lambda)\triangleq -P(\mu_2-r)-\sigma_2\Lambda,$$
$$\Psi_{1}(t,i,P)\triangleq P\sigma_1\sigma_1^{\top}-R_1,\,\,
\Psi_{2}(t,i,P)\triangleq P\sigma_2\sigma_2^{\top}+R_2,\,\,
\Psi_{3}(t,i,P)\triangleq -P\sigma_1\sigma_2^{\top},$$
$$
\Theta(t,i,P)\triangleq \Psi_{1}\Psi_{2}-\Psi_{3}^2<0,\,\,
\Upsilon(t,i,P,\Lambda)\triangleq \Psi_{1}\Phi_2^2+\Psi_{2}\Phi_1^2-2\Psi_{3}\Phi_1\Phi_2.
$$
From Theorem \ref{theorem:solvability of ESRE}, BSDE \eqref{504} admits a unique solution $\big(P(\cdot,i),\Lambda(\cdot,i)\big)_{ i \in \mathcal{M}}$.

\begin{theorem}
Suppose Condition \ref{con1} holds and $\Gamma_1=\Gamma_2 = \mathbb{R}$. For any $(x,i_0)\in \mathbb{R} \times \mathcal{M}$, the unconstrained LQ game \eqref{501}-\eqref{502} admits optimal control-strategy pairs $(\pi_1^{*},\beta_2^{*})$ for Player 1 and $(\pi_2^{*},\beta_1^{*})$ for Player 2, which are
\begin{equation*}
\left\{
\begin{aligned}
\pi_1^{*} (t,i,X(t))&=-\Upsilon_1(P(t,i),\Lambda(t,i)) X(t)/ \Theta(P(t,i)),\\
\beta_2^{*}(t,i,\pi_1(t),X(t))&=-
\big[\Psi_{3}(P(t,i)) \pi_1(t) +\Phi_{2}(P(t,i),\Lambda(t,i))X(t)\big]\big/ \Psi_{2}(P(t,i)),
\end{aligned}
\right.
 \end{equation*}
and \begin{equation*}
\left\{
\begin{aligned}
\pi_2^{*} (t,i,X(t))&=- \Upsilon_2(P(t,i),\Lambda(t,i)) X(t)/ \Theta(P(t,i)),\\
\beta_1^{*}(t,i,\pi_2(t),X(t))&=- \big[\Psi_{3}(P(t,i)) \pi_2(t) +\Phi_{1}(P(t,i),\Lambda(t,i))X(t)\big]\big/ \Psi_{1}(P(t,i)),
\end{aligned}
\right.
\end{equation*}
where $\big(P(\cdot,i),\Lambda(\cdot,i)\big)_{i\in \mathcal{M}}$ is the solution of \eqref{504},
and
$$\Upsilon_1(t,i,P,\Lambda)\triangleq \Psi_{2}\Phi_{1}-\Psi_{3}\Phi_{2},~~\Upsilon_2(t,i,P,\Lambda)\triangleq \Psi_{1}\Phi_{2}-\Psi_{3}\Phi_{1}.$$
Moreover, the unconstrained LQ game has a value, given by
$$V(x,i_0)=P(0,i_0)x^2.$$
\end{theorem}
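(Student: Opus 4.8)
The plan is to obtain LQ game \eqref{501}-\eqref{502} as a particular case of the inhomogeneous game \eqref{201}-\eqref{202} and then invoke the verification theorems \ref{theorem:1's verification theorem} and \ref{theorem:2's verification theorem}. With a scalar state, $n=2$, $m_1=m_2=1$, $u_1=\pi_1$, $u_2=\pi_2$, one reads off $A=r$, $B_1=\mu_1-r$, $B_2=-(\mu_2-r)$, $C=0$, $D_1=\sigma_1^{\top}$, $D_2=-\sigma_2^{\top}$, $b=0$, $\sigma=\bm{0}$, $K=0$, $R_{11}=-R_1$, $R_{12}=0$, $R_{22}=R_2$, $G=-\frac{1}{4}$. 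The first task is to turn Conditions \ref{con1}-\ref{con3} into Assumptions \ref{ass1}-\ref{ass3}. I would take the bounding constants to be $\overline{K}=\overline{G}=\frac{1}{4}$ (legitimate since $K\equiv0$ and $|G|=\frac{1}{4}$), $c_1=2\tilde r+\tilde q$, $c_3=\tilde\mu$, $\bar c_2=\bar\sigma$, $\underline c_2=\underline\sigma$, using $C=0$ and $\underline\sigma\le\sigma_k\sigma_k^{\top}\le\bar\sigma$ (and, implicitly, $2\tilde r+\tilde q>0$); a short computation shows that with these choices $\epsilon=\epsilon_1$ and $\bar\epsilon=\epsilon_2$. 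Consequently Condition \ref{con1} gives Assumption \ref{ass1} (namely $\underline\sigma\le D_k^{\top}D_k$), Condition \ref{con2} gives $R_1\wedge R_2>\epsilon_1+\bar\sigma\epsilon_2=\epsilon+\bar\epsilon\bar c_2$, i.e.\ Assumption \ref{ass2}, and Condition \ref{con3} is literally Assumption \ref{ass3}.

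With the general hypotheses in force I would next reduce the two BSDEs. Since $b=0$ and $\sigma=\bm{0}$, the generator of \eqref{305} vanishes at $(\varphi,\Delta)=(0,\bm{0})$ (there $\hat{\sigma}\equiv\bm{0}$), so $(\varphi,\Delta)\equiv(0,\bm{0})$ solves it, and by the uniqueness part of Theorem \ref{theorem:solvability of LBSDE} this is the only solution; hence $\hat{\sigma}\equiv\bm{0}$ identically. Substituting $C=0$ and $K=0$ into \eqref{304}, one has $\widehat C=(\Phi_1,\Phi_2)^{\top}$ and $\widehat R=\begin{bmatrix}\Psi_1&\Psi_3\\ \Psi_3&\Psi_2\end{bmatrix}$ in the notation of the statement; $\widehat R$ is invertible by Assumptions \ref{ass1}-\ref{ass2} (indeed these force $\Psi_1<0<\Psi_2$, hence $\Theta=\Psi_1\Psi_2-\Psi_3^2<0$), and the explicit $2\times2$ inverse yields $H_1=-\widehat C^{\top}\widehat R^{-1}\widehat C=-\Upsilon/\Theta$, so \eqref{304} is exactly \eqref{504}. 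Existence of its solution comes from Theorem \ref{theorem:solvability of ESRE}, the BMO regularity of $\Lambda(\cdot,i)$ from Lemma \ref{BMO}, and uniqueness from the remark following Theorem \ref{theorem:1's verification theorem}.

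It then remains to specialise the feedback maps. Plugging $\hat{\sigma}\equiv\bm{0}$ and the identifications above into \eqref{301}-\eqref{302}, and using $\widetilde R_{11}=\Theta/\Psi_2$, $\widehat C_1-\widehat R_{12}\widehat R_{22}^{-1}\widehat C_2=\Upsilon_1/\Psi_2$, $\widetilde R_{22}=\Theta/\Psi_1$, $\widehat C_2-\widehat R_{12}^{\top}\widehat R_{11}^{-1}\widehat C_1=\Upsilon_2/\Psi_1$, one recovers precisely the stated $\pi_1^{*},\beta_2^{*}$ and $\pi_2^{*},\beta_1^{*}$. Theorems \ref{theorem:1's verification theorem} and \ref{theorem:2's verification theorem} then assert that these pairs are optimal for Player 1's and Player 2's values for any $(x,i_0)\in\mathbb{R}\times\mathcal{M}$, and since $b=0$, $\sigma=\bm{0}$, $\varphi\equiv0$, $\Delta\equiv\bm{0}$ make $H_3\equiv0$ and annihilate all the remaining inhomogeneous terms, both value formulas collapse to $P(0,i_0)x^2$; in particular $V_1=V_2$, so the game has value $V(x,i_0)=P(0,i_0)x^2$.

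The main obstacle is purely the bookkeeping in the first paragraph: one must choose the abstract bounding constants $c_1,c_3,\overline K,\overline G,\bar c_2$ so that the derived quantities $\epsilon,\bar\epsilon$ coincide with the market-data constants $\epsilon_1,\epsilon_2$, and check that the range constraint $|P(\cdot,i)|\le\bar\epsilon=\epsilon_2$ appearing in \eqref{504} is exactly the one delivered by Theorem \ref{theorem:solvability of ESRE}; one also has to keep $2\tilde r+\tilde q>0$ in mind, which is implicit in the definitions of $\epsilon_1$ and $\epsilon_2$. Everything past that — the reduction of \eqref{305} to zero, the $2\times2$ inversion of $\widehat R$, and the collapse of the value formula — is routine substitution.
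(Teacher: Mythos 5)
Your proposal is correct and follows the same route the paper takes: the paper's own proof of this theorem is just the one-line citation of Theorems \ref{theorem:1's verification theorem} and \ref{theorem:2's verification theorem} after specializing the coefficients, and your bookkeeping (the choices $\overline K=\overline G=\tfrac14$, $c_1=2\tilde r+\tilde q$, $c_3=\tilde\mu$, $\bar c_2=\bar\sigma$ giving $\epsilon=\epsilon_1$, $\bar\epsilon=\epsilon_2$; the reduction of \eqref{305} to the zero solution; and the $2\times2$ inversion producing $H_1=-\Upsilon/\Theta$ and the stated feedbacks) correctly fills in exactly the details the paper leaves implicit.
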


\begin{condition}\label{con4}
$\sigma_1\sigma_2^{\top}=0$ for all $i\in \mathcal{M}$.
\end{condition}

\begin{remark}
If the coefficients satisfy Conditions \ref{con1}-\ref{con4}, then we have
$$\beta_k^{*}(t,i,X(t))= -\frac{ \Phi_{k}(P(t,i),\Lambda(t,i))X(t)} {\Psi_{k}(P(t,i))},\,\,k\in\{1,2\}.$$
In fact, $\sigma_1\sigma_2^{\top}$ is the correlation coefficient between $\ln S_1$ and $\ln S_2$. If there is no correlation between the risky assets, then the optimal strategies are only the feedback of state, and have nothing to do with the opposite player's portfolio.
\end{remark}

\subsection{Exactly one player is subject to no-shorting constraint}

In this subsection, we study the LQ game \eqref{501}-\eqref{502} when exactly one player is subject to no-shorting constraint.
We first introduce the following BSDEs
\begin{equation}\label{505}
\left\{
\begin{aligned}
\dd P_{k}(t,i)=\,&-\big[2rP_k(t,i) +\widetilde{G}_k(P_k(t,i),\Lambda_k(t,i))
+\sum_{j\in \mathcal{M}} q_{ij}P_k(t,j)\big]\dt +\Lambda_k(t,i)^{\top}\dw(t),\\[-3pt]
P_{k}(T,i)=\,&-\frac{1}{4},\, |P_k(\cdot,i)|\leq \epsilon_2, \, \text{ for all } \, i\in \mathcal{M},\, k=\{1,2,3,4,5,6\},
\end{aligned}
\right.
\end{equation}
where for all $(t,i,P,\Lambda)\in [0,T]\times\mathcal{M}\times[-\epsilon_2,\epsilon_2]\times \mathbb{R}^2$,
\begin{align*}
\widetilde{G}_1(t,i,P,\Lambda) &\triangleq \frac{-(\Upsilon_1^{+})^2-\Theta\Phi_2^2}
{\Theta\Psi_2},~~
\widetilde{G}_2(t,i,P,\Lambda)\triangleq \frac{-(\Upsilon_1^{-})^2-\Theta\Phi_2^2}
{\Theta\Psi_2},\\
\widetilde{G}_3(t,i,P,\Lambda) &\triangleq \frac{-(\Upsilon_2^{+})^2-\Theta\Phi_1^2}
{\Theta\Psi_1},~~
\widetilde{G}_4(t,i,P,\Lambda)\triangle \frac{-(\Upsilon_2^{-})^2-\Theta\Phi_1^2}
{\Theta\Psi_1},\\
\widetilde{G}_5(t,i,P,\Lambda) &\triangleq \big[(\Phi_1^{+})^2-2\Phi_1\Phi_1^{+}\big]\big/\Psi_1 +\big[(\Phi_2^{-})^2+2\Phi_2\Phi_2^{-}\big]\big/\Psi_2,\\
\widetilde{G}_6(t,i,P,\Lambda) &\triangleq \big[(\Phi_1^{-})^2+2\Phi_1\Phi_1^{-}\big]\big/ \Psi_1+ \big[(\Phi_2^{+})^2-2\Phi_2\Phi_2^{+}\big]\big/ \Psi_2.
\end{align*}
From Theorem \ref{Constrained BSDE}, BSDEs \eqref{505} admit unique solutions
$\big(P_k(\cdot,i),\Lambda_k(\cdot,i)\big)_{i\in \mathcal{M}}$ and $(P_k(\cdot,i),\Lambda_k(\cdot,i))\in L_{\mathcal{F}^W}^{\infty}(0,T;\mathbb{R})\times L_{\mathcal{F}^W}^{2,\mathrm{BMO}}(0,T;\mathbb{R}^{2})$ for all $i\in \mathcal{M}$, $k\in\{1,2,3,4,5,6\}$.

\textbf{Case I: Only Player 1 is subject to no-shorting constraint. }  
We assume just Player 1's portfolio is subject to no-shorting constraint, i.e., $\Gamma_1=[0,+\infty)$, $\Gamma_2 = \mathbb{R}$.
In this case, SREs \eqref{404} become \eqref{505} with $k\in\{1,2\}$.

\begin{theorem}
Suppose Condition \ref{con1} holds and $\Gamma_1=[0,+\infty)$, $\Gamma_2 = \mathbb{R}$. For any $(x,i_0)\in \mathbb{R} \times \mathcal{M}$, the constrained LQ game \eqref{501}-\eqref{502} admits optimal control-strategy pairs $(\pi_1^{*},\beta_2^{*})$ for Player 1 and $(\pi_2^{*},\beta_1^{*})$ for Player 2, which are
\begin{equation*}
\left\{
\begin{aligned}
\pi_1^{*} (t,i,X(t))
=\,& - \Upsilon_1(P_1,\Lambda_1)^{+} X(t)^{+} / \Theta(P_1)
- \Upsilon_1(P_2,\Lambda_2)^{-} X(t)^{-} / \Theta(P_2),\\[5pt]
\beta_2^{*}(t,i,\pi_1(t),X(t))=\,&- \big[ \Psi_3(P_1)\pi_1(t)I_{\{X(t)>0\}}+\Phi_2(P_1,\Lambda_1) X(t)^{+} \big] \big/ \Psi_2(P_1)\\[3pt]
&- \big[ \Psi_3(P_2)\pi_1(t)I_{\{X(t)<0\}}-\Phi_2(P_2,\Lambda_2) X(t)^{-} \big] \big/ \Psi_2(P_2),
\end{aligned}
\right.
 \end{equation*}
and \begin{equation*}
\left\{
\begin{aligned}
\pi_2^{*} (t,i,X(t))=\,&
\frac{ \big[\Psi_3(P_1)\Upsilon_1(P_1,\Lambda_1)^{+} - \Phi_2(P_1,\Lambda_1)\Theta(P_1)\big] X(t)^{+} }
{\Psi_2(P_1)\Theta(P_1)}\\[3pt]
&+\frac{ \big[\Psi_3(P_2)\Upsilon_1(P_2,\Lambda_2)^{-} + \Phi_2(P_2,\Lambda_2)\Theta(P_2)\big] X(t)^{-} }
{\Psi_2(P_2)\Theta(P_2)},\\[5pt]
\beta_1^{*}(t,i,\pi_2(t),X(t))=\,&
- \big[\Psi_3(P_1)\pi_2(t)I_{\{X(t)>0\}}+\Phi_1(P_1,\Lambda_1) X(t)^{+}\big]^{+} \big/ \Psi_1(P_1)\\[3pt]
&- \big[ \Psi_3(P_2)\pi_2(t)I_{\{X(t)<0\}}-\Phi_1(P_2,\Lambda_2) X(t)^{-} \big]^{+} \big/ \Psi_1(P_2),
\end{aligned}
\right.
\end{equation*}
where $\big(P_k(\cdot,i),\Lambda_k(\cdot,i)\big)_{i\in \mathcal{M}}$, $k\in\{1,2\}$, are solutions of \eqref{505}.
Moreover, the constrained LQ game has a value, given by
$$V(x,i_0)=P_1(0,i_0)(x^{+})^2+P_2(0,i_0)(x^{-})^2.$$
\end{theorem}

\textbf{Case II: Only Player 2 is subject to no-shorting constraint. } 
We assume just Player 2's portfolio is subject to no-shorting constraint, i.e., $\Gamma_1=\mathbb{R}$, $\Gamma_2 =[0,+\infty) $.
In this case, SREs \eqref{404} become \eqref{505} with $k\in\{3,4\}$.

\begin{theorem}
Suppose Condition \ref{con1} holds and $\Gamma_1=\mathbb{R}$, $\Gamma_2 =[0,+\infty) $. For any $(x,i_0)\in \mathbb{R} \times \mathcal{M}$, the constrained LQ game \eqref{501}-\eqref{502} admits optimal control-strategy pairs $(\pi_1^{*},\beta_2^{*})$ for Player 1
and $(\pi_2^{*},\beta_1^{*})$ for Player 2, which are
\begin{equation*}
\left\{
\begin{aligned}
\pi_1^{*} (t,i,X(t))=\;&\frac{ \big[\Psi_3(P_3)\Upsilon_2(P_3,\Lambda_3)^{+} -\Phi_1(P_3,\Lambda_3)\Theta(P_3)\big] X(t)^{+} }{\Psi_1(P_3)\Theta(P_3)}\\[3pt]
&+\frac{ \big[\Psi_3(P_4)\Upsilon_2(P_4,\Lambda_4)^{-} + \Phi_1(P_4,\Lambda_4)\Theta(P_4)\big] X(t)^{-} }{\Psi_1(P_4)\Theta(P_4)}, \\[5pt]
\beta_2^{*}(t,i,\pi_1(t),X(t))=\;&\big[\Psi_3(P_3)\pi_1(t)I_{\{X(t)>0\}}+\Phi_2(P_3,\Lambda_3) X(t)^{+}\big]^{-} \big/\Psi_2(P_3)\\[3pt]
&+ \big[ \Psi_3(P_4)\pi_1(t)I_{\{X(t)<0\}}-\Phi_2(P_4,\Lambda_4) X(t)^{-} \big]^{-} \big/ \Psi_2(P_4),
\end{aligned} 
\right.
 \end{equation*}
and \begin{equation*}
\left\{
\begin{aligned}
\pi_2^{*} (t,i,X(t))=\;& - \Upsilon_2(P_3,\Lambda_3)^{+} X(t)^{+} /\Theta(P_3)
- \Upsilon_2(P_4,\Lambda_4)^{-} X(t)^{-} / \Theta(P_4), \\[5pt]
\beta_1^{*}(t,i,\pi_2(t),X(t))=\;&- \big[ \Psi_3(P_3)\pi_2(t)I_{\{X(t)>0\}}+\Phi_1(P_3,\Lambda_3) X(t)^{+} \big] \big/ \Psi_1(P_3)\\[3pt]
&- \big[ \Psi_3(P_4)\pi_2(t)I_{\{X(t)<0\}}-\Phi_1(P_4,\Lambda_4) X(t)^{-} \big] \big/ \Psi_1(P_4),
\end{aligned}
\right.
\end{equation*}
where $\big(P_k(\cdot,i),\Lambda_k(\cdot,i)\big)_{i\in \mathcal{M}}$, $k\in\{3,4\}$, are solutions of \eqref{505}.
Moreover, the constrained LQ game has a value, given by
$$V(x,i_0)=P_3(0,i_0)(x^{+})^2+P_4(0,i_0)(x^{-})^2.$$
\end{theorem}

\subsection{Both players are subject to no-shorting constraint}\label{cons3}

In this subsection, we assume both players are subject to no-shorting constraint, i.e. $\Gamma_1=\Gamma_2=[0,+\infty)$.
In this case, SREs \eqref{404} become \eqref{505} with $k\in\{5,6\}$.

\begin{theorem} Suppose Conditions \ref{con1}-\ref{con4} hold and $\Gamma_1=\Gamma_2=[0,+\infty)$. For any $(x,i_0)\in \mathbb{R} \times \mathcal{M}$, the constrained LQ game \eqref{501}-\eqref{502} admits optimal control-strategy pairs $(\pi_1^{*},\beta_2^{*})$ for Player 1 and $(\pi_2^{*},\beta_1^{*})$ for Player 2,
which are
\begin{equation*}
\left\{
\begin{aligned}
\pi_1^{*} (t,i,X(t))=\;&\beta_1^{*}(t,i,X(t))
=\frac{ \Phi_1(P_5,\Lambda_5)^{+} X(t)^{+} } {-\Psi_1(P_5)}
+ \frac{ \Phi_1(P_6,\Lambda_6)^{-} X(t)^{-} } {-\Psi_1(P_6)},\\[5pt]
\pi_2^{*} (t,i,X(t))=\;&\beta_2^{*}(t,i,X(t))=\frac{ \Phi_2(P_5,\Lambda_5)^{-} X(t)^{+} } {\Psi_2(P_5)}+ \frac{ \Phi_2(P_6,\Lambda_6)^{+} X(t)^{-} } {\Psi_2(P_6)},
\end{aligned}
\right.
\end{equation*}
where $\big(P_k(\cdot,i),\Lambda_k(\cdot,i)\big)_{i\in \mathcal{M}}$, $k\in\{5,6\}$, are solutions of \eqref{505}.
Moreover, the constrained LQ game has a value, given by
$$V(x,i_0)=P_5(0,i_0)(x^{+})^2+P_6(0,i_0)(x^{-})^2.$$
\end{theorem}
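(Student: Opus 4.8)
The plan is to obtain this theorem as a direct specialization of Theorem \ref{Constrained Verification Theorem} to the portfolio problem \eqref{501}--\eqref{502} with the cone constraints $\Gamma_1=\Gamma_2=[0,+\infty)$, after recording the dictionary between the market data and the abstract LQ-game coefficients: $A=r$, $B_1=\mu_1-r$, $B_2=-(\mu_2-r)$, $C=\bm{0}$, $D_1=\sigma_1^{\top}$, $D_2=-\sigma_2^{\top}$, $K=0$, $G=-\tfrac14$, $R_{11}=-R_1$, $R_{22}=R_2$, $R_{12}=0$, and $b=\sigma=\bm{0}$, so that we are dealing with a homogeneous system in the framework of Section \ref{Game} (no $\varphi$-component). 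First I would check that Conditions \ref{con1}--\ref{con4} entail Assumptions \ref{ass1}--\ref{ass3}, with the constants $\epsilon,\bar{\epsilon}$ identified with $\epsilon_1,\epsilon_2$ and $\bar{c}_2$ with $\bar{\sigma}$; this is the routine translation already announced in the text, and it makes Theorem \ref{Constrained BSDE} applicable, so that \eqref{404}--\eqref{405} are uniquely solvable.

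The crucial structural observation is that $\widehat{R}_{12}=R_{12}+P\,D_1^{\top}D_2=-P\,\sigma_1\sigma_2^{\top}=0$ under Condition \ref{con4}, so we are in the situation of Example 2: the minimax functions decouple additively in $v_1$ and $v_2$, and each feedback strategy becomes a pure state feedback independent of the opponent's control, which automatically forces $\pi_1^{*}=\beta_1^{*}$ and $\pi_2^{*}=\beta_2^{*}$. Next I would compute, for $P\in[-\epsilon_2,\epsilon_2]$, the reduced data $\widehat{R}_{11}=-R_1+P\sigma_1\sigma_1^{\top}=\Psi_1<0$, $\widehat{R}_{22}=R_2+P\sigma_2\sigma_2^{\top}=\Psi_2>0$, $\widehat{C}_1=P(\mu_1-r)+\sigma_1\Lambda=\Phi_1$, $\widehat{C}_2=-P(\mu_2-r)-\sigma_2\Lambda=\Phi_2$. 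Then $\widetilde{H}_1$ (resp. $\widetilde{H}_2$) splits as $\max_{v_1\ge 0}\{\Psi_1 v_1^2+2\Phi_1 v_1\}+\min_{v_2\ge 0}\{\Psi_2 v_2^2+2\Phi_2 v_2\}$ (resp. with $-2\Phi_k$), and each one-dimensional constrained quadratic is solved explicitly: since $\Psi_1<0<\Psi_2$, the maximizer over $[0,+\infty)$ is $\Phi_1^{+}/(-\Psi_1)$ with value $((\Phi_1^{+})^2-2\Phi_1\Phi_1^{+})/\Psi_1$, and the minimizer is $\Phi_2^{-}/\Psi_2$ with value $((\Phi_2^{-})^2+2\Phi_2\Phi_2^{-})/\Psi_2$; the $-2\Phi_k$ variants swap positive and negative parts. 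Substituting these yields precisely $\widetilde{H}_1=\widetilde{G}_5$ and $\widetilde{H}_2=\widetilde{G}_6$, so that \eqref{404}--\eqref{405} collapse to \eqref{509}--\eqref{510}, which are uniquely solvable with solutions in $L^{\infty}_{\mathcal{F}^W}\times L^{2,\mathrm{BMO}}_{\mathcal{F}^W}$.

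With the SREs identified, I would read the optimal feedbacks off \eqref{406}--\eqref{407}: because $\widehat{R}_{12}=0$, the arg-max/arg-min maps $\hat{v}_{1k}$, $\hat{v}_{2k}$ equal the (opponent-independent) maps $\hat{\beta}_{1k}$, $\hat{\beta}_{2k}$, and all of them are the explicit maximizers/minimizers found above, namely $\hat{v}_{11}=\Phi_1(P_5,\Lambda_5)^{+}/(-\Psi_1(P_5))$, $\hat{v}_{12}=\Phi_1(P_6,\Lambda_6)^{-}/(-\Psi_1(P_6))$, $\hat{v}_{21}=\Phi_2(P_5,\Lambda_5)^{-}/\Psi_2(P_5)$, $\hat{v}_{22}=\Phi_2(P_6,\Lambda_6)^{+}/\Psi_2(P_6)$; assembling them with the $X^{+}$/$X^{-}$ weights reproduces the stated formulas for $\pi_1^{*},\pi_2^{*},\beta_1^{*},\beta_2^{*}$. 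The value then follows verbatim from Theorem \ref{Constrained Verification Theorem}: $\widetilde{V}(x,i_0)=P_1(0,i_0)(x^{+})^2+P_2(0,i_0)(x^{-})^2=P_5(0,i_0)(x^{+})^2+P_6(0,i_0)(x^{-})^2$. The main obstacle is not conceptual but the sign bookkeeping: I must check that the solutions of the constrained scalar quadratic programs reproduce the generators $\widetilde{G}_5,\widetilde{G}_6$ and the feedback maps with exactly the right combinations of positive and negative parts, and confirm that the simultaneous vanishing of $C$, $b$, $\sigma$ and of $\widehat{R}_{12}$ both removes the linear BSDE \eqref{305} from the picture and decouples the strategies, so that the general constrained verification theorem applies without modification.
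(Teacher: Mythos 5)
Your proposal is correct and follows exactly the route the paper intends: the paper offers no written proof for this theorem beyond the remark that it follows from Theorem \ref{Constrained Verification Theorem} once \eqref{404}--\eqref{405} are specialized to \eqref{509}--\eqref{510}, and your proposal supplies precisely the omitted bookkeeping (the coefficient dictionary, the observation that Condition \ref{con4} forces $\widehat{R}_{12}=-P\sigma_1\sigma_2^{\top}=0$ so that Example 2 applies, and the explicit one-dimensional constrained optimizers yielding $\widetilde{H}_1=\widetilde{G}_5$, $\widetilde{H}_2=\widetilde{G}_6$ and the stated feedbacks). All sign computations check out against the paper's formulas.
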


\section{Conclusion}\label{Conclusion}
In this paper, we studied zero-sum SLQD games for systems with regime switching and random coefficients.
We obtained the optimal feedback control-strategy pairs for the two players via some new kind of multidimensional BSDEs.
The solvability of the indefinite SREs is interesting in its own right from the BSDE theory point of view. For homogeneous systems, we put closed convex cone control constraint and obtained the corresponding optimal feedback control-strategy pairs.
At last, we solved several portfolio selection problems with possible no-shorting constraints in a non-Markovian regime switching market.

There are many possible interesting extensions. For instance,
(1)
The optimal feedback control-strategy pairs in this paper depend on the sample path.
What players can usually observe in practice is the state of the system or another observation process.
So one can consider the problem in a partially observable framework, where controls or strategies are adapted to the observed information.
(2) One can consider the problem with multidimensional state process, in which case one has to study the solvability of matrix-valued indefinite SREs.

\end{document}